\DeclareMathOperator{\vol}{vol}
\newcommand{\omegaij}{\omega_{\widehat{ij}}}
\title[Volume and cusp excursions in hyperbolic geodesics]{The volume of a divisor and cusp excursions of geodesics in hyperbolic manifolds}
\newcommand{\R}{\mathbb R}
\renewcommand{\P}{\mathbb P}
\newcommand{\floor}[1]{\left\lfloor #1 \right\rfloor}
\newcommand{\ti}[1]{\tilde{#1}}
\newif\ifdebug
\newif \iffig
\newif \iftable
\g@addto@macro{\UrlBreaks}{\UrlOrds}
\definecolor{darkred}{rgb}{0.4,0,0}
\definecolor{darkgreen}{rgb}{0,0.5,0}
\definecolor{darkblue}{rgb}{0,0,0.4}
\thanks{Revised \textsc{\today}}
\date{December 2023}
\author{
	Simion Filip
}
\address{
	\parbox{0.7\textwidth}{
		Department of Mathematics\\
		University of Chicago\\
		5734 S University Ave\\
		Chicago, IL 60637}
}
\email{{sfilip@math.uchicago.edu}}
\author{
	John Lesieutre
}
\address{
	\parbox{0.7\textwidth}{
		Department of Mathematics\\
		The Pennsylvania State University\\
		204 McAllister Building\\
		University Park, PA 16802}
}
\email{{jdl@psu.edu}}
\author{
	Valentino Tosatti
}
\address{
	\parbox{0.7\textwidth}{Courant Institute of Mathematical Sciences\\
New York University\\
251 Mercer St\\
New York, NY 10012}
}
\email{{tosatti@cims.nyu.edu}}
\begin{document}

\begin{abstract}
We give a complete description of the behavior of the volume function at the boundary of the pseudoeffective cone of certain Calabi--Yau complete intersections known as Wehler $N$-folds.
We find that the volume function exhibits a pathological behavior when $N\geq 3$, we obtain examples of a pseudoeffective $\mathbb{R}$-divisor $D$ for which the volume of $D+sA$, with $s$ small and $A$ ample, oscillates between two powers of $s$, and we deduce the sharp regularity of this function answering a question of Lazarsfeld.
We also show that $h^0(X,\floor{mD}+A)$ displays a similar oscillatory behavior as $m$ increases, showing that several notions of numerical dimensions of $D$ do not agree and disproving a conjecture of Fujino.
We accomplish this by relating the behavior of the volume function along a segment to the visits of a corresponding hyperbolic geodesics to the cusps of a hyperbolic manifold.
\end{abstract}

\maketitle

\listoffixmes

\section{Introduction}

\subsection{The volume function}
\subsubsection{Volume of divisors} Let $X^n$ be a smooth projective variety over $\mathbb{C}$, and $D$ a Cartier divisor on $X$. As shown by Lazarsfeld \cite[\S 2.2.C]{pag1}, its volume
\[\vol(D)=\limsup_{m\to+\infty}\frac{h^0(X,mD)}{m^n/n!}\in\mathbb{R}_{\geq 0},\]
is a numerical invariant, which thus descends to a function defined on the N\'eron-Severi group $N^1(X)$ of divisors modulo numerical equivalence. The limsup is actually a limit, and divisors with positive volume are called big. The volume of a nef divisor $D$ is simply equal to its selfintersection $(D^n)$.

The volume function is homogeneous of degree $n$, so its definition can naturally be extended to $\mathbb{Q}$-divisors, and Lazarsfeld \cite[Theorem 2.2.44]{pag1} showed that the resulting volume function on $N^1(X)_\mathbb{Q}$ is locally Lipschitz, hence it extends to a continuous function
\[\vol:N^1(X)_\mathbb{R}\to \mathbb{R}_{\geq 0},\]
on the real N\'eron-Severi group of $\mathbb{R}$-divisors modulo numerical equivalence.

\subsubsection{Behavior at the pseudoeffective boundary}
The volume function is strictly positive precisely on the big cone $\mathrm{Big}(X)\subset N^1(X)_\mathbb{R}$, whose closure is the pseudoeffective cone $\ov{\mathrm{Eff}}(X)$. Thus $\vol$ vanishes on the boundary of $\ov{\mathrm{Eff}}(X)$, and a natural problem (see e.g. \cite{lehmann, mccleerey}) is to understand its order of vanishing. More precisely, given $D\in\partial\ov{\mathrm{Eff}}(X)$ a pseudoeffective $\mathbb{R}$-divisor with volume zero, and given an ample divisor $A$, then $D+sA$ is big for $s>0$ and one can ask at what rate the function
$$[0,1]\ni s\mapsto\vol(D+sA),$$
approaches $0$ as $s$ decreases to $0$. It was initially believed that this function should behave asymptotically like $s^d$ for some positive integer $d$, which is what happens when $D$ is nef.  This was recently disproved by the second-named author \cite{lesieutre}, who on a Calabi--Yau $3$-fold (which is a general complete intersection of type \((1,1)\), \((1,1)\), \((2,2)\) in \(\P^3 \times \P^3\)) constructed a pseudoeffective $\mathbb{R}$-divisor $D$ such that
$$C^{-1}s^{\frac{3}{2}}\leq \vol(D+sA)\leq Cs^{\frac{3}{2}},$$
for all $s>0$ small and for some $C>0$ (see also \cite{choipark, hoffstenger, hoffstengeryanez, jiangwang, yanez} for other recent papers that apply the method of \cite{lesieutre} to other examples). He and McCleerey \cite[Questions 4.1 and 4.2]{mccleerey} then raised the question whether we always have
\begin{equation}\label{speranza1}
C^{-1}s^\alpha\leq \vol(D+sA)\leq Cs^\alpha,
\end{equation}
for some $\alpha\in\mathbb{R}_{>0}$, or at the very least whether the limit
\begin{equation}\label{speranza2}
\lim_{s\downarrow 0}\frac{\log\vol(D+sA)}{\log s},
\end{equation}
exists.

\subsection{Growth of sections}
\subsubsection{Iitaka dimension} Given a Cartier divisor $D$, a fundamental result (see e.g. \cite[Corollary 2.1.38]{pag1}) is that the growth rate of $h^0(X,mD), m\geq 1,$ is polynomial, at least for $m$ sufficiently divisible, namely either $h^0(X,mD)=0$ for all $m\geq 1$, or else there exists $C>0$ and an integer $0\leq\kappa\leq n$ (known as the Iitaka dimension of $D$), such that for all $m\geq 1$ sufficiently divisible we have
\[C^{-1}m^\kappa\leq h^0(X,mD)\leq Cm^\kappa.\]
In fact a much more precise result, due to Kaveh-Khovanskii \cite{kk}, shows that in this case the ratio $h^0(X,mD)/m^\kappa$ converges to a positive finite limit as $m$ sufficiently divisible goes to $+\infty$.
\subsubsection{$\mathbb{R}$-divisors}
If $D$ is more generally an $\mathbb{R}$-divisor, one can still consider the space of sections using round-downs, thus looking at $h^0(X,\floor{mD}).$
Nakayama \cite{nakayama} discovered that to have better numerical properties it is convenient to twist this by an ample divisor $A$, and consider the growth rate of \[h^0(X,\floor{mD}+A).\]
It was initially hoped that this growth might also be polynomial in \(m\) \cite{nakayama,lehmann,eckl}, which is indeed the case when $D$ is nef \cite[Proposition V.2.7]{nakayama}.  However, the aforementioned example by the second-named author \cite{lesieutre} shows that this is not the case in general: the pseudoeffective $\mathbb{R}$-divisor $D$ that he constructs satisfies
\[
C^{-1} m^{3/2} \leq h^0(X,\floor{mD}+A) \leq C m^{3/2},
\]
for some $C>0$ and all $m$ sufficiently large.

Although this growth rate is not polynomial, it is not too far removed, and a natural question posed in \cite[Remark 5]{lesieutre} asks whether given any pseudoeffective $\mathbb{R}$-divisor $D$ and sufficiently ample divisor $A$ we always have
\begin{equation}\label{speranza3}
C^{-1} m^{\alpha} \leq h^0(X,\floor{mD}+A) \leq C m^{\alpha},
\end{equation}
for some $\alpha\in\mathbb{R}_{>0}$, some $C>0$ and all $m$ sufficiently large.

\subsection{The main result}
Our main result shows that all the above expectations \eqref{speranza1}, \eqref{speranza2}, \eqref{speranza3} fail.

\begin{theorem}[Oscillation of volume]
\label{main}
For every $N\geq 3$ there exists a smooth Calabi--Yau $N$-fold \(X\) such that given any real number $\delta\in [1,\frac{N}{2}]$ there exists a pseudoeffective \(\R\)-divisor \(D\) with $\vol(D)=0$ so that given any sufficiently ample divisor $A$ we have
\begin{align}
\label{1}
\liminf_{s \downarrow 0} \frac{\log \vol(D+sA)}{\log s} &= \delta,\\
\label{2}
\limsup_{s \downarrow 0} \frac{\log \vol(D+sA)}{\log s} &= \frac{N}{2},\\
\label{3}
\limsup_{m \to \infty} \frac{\log h^0(X,\floor{mD}+A)}{\log m} &= N-\delta,\\
\label{4}
\liminf_{m \to \infty} \frac{\log h^0(X,\floor{mD}+A)}{\log m} &=\frac{N}{2}.
\end{align}
\end{theorem}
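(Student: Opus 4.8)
The plan is as follows.

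\medskip
\noindent\emph{Reduction of \eqref{3}--\eqref{4} to \eqref{1}--\eqref{2}.}
For $A$ sufficiently ample one has, with $D$ fixed and uniformly in $m$,
\[
C^{-1}m^N\vol\!\Big(D+\tfrac1mA\Big)\ \le\ h^0\!\big(X,\floor{mD}+A\big)\ \le\ C\,m^N\vol\!\Big(D+\tfrac1mA\Big).
\]
For the left inequality, write $\floor{mD}+A=mD+\big(A-\{mD\}\big)$ with $\{mD\}=mD-\floor{mD}$ of bounded size, so that $A-\{mD\}$ is again ample; by homogeneity this divisor has volume $m^N\vol\big(D+\tfrac1m(A-\{mD\})\big)$, and $\vol(D+sA')\asymp\vol(D+sA)$ for comparable ample $A'$, since $D$ pseudoeffective forces $C(D+sA)\ge D+sCA$ and monotonicity of $\vol$ then gives $\vol(D+sA)\le\vol(D+sCA)\le C^N\vol(D+sA)$. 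The right inequality is the bound of $h^0$ of a big line bundle by its volume, via the Nakayama--Zariski decomposition, recorded earlier. Taking logarithms with $s=1/m\downarrow 0$,
\[
\frac{\log h^0(X,\floor{mD}+A)}{\log m}=N-\frac{\log\vol(D+\tfrac1mA)}{\log m}+o(1),
\]
so \eqref{1} gives $\limsup=N-\delta$, which is \eqref{3}, and \eqref{2} gives $\liminf=N-\tfrac N2=\tfrac N2$, which is \eqref{4}. Henceforth we prove \eqref{1} and \eqref{2}.

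\medskip
\noindent\emph{Translation to hyperbolic geometry.}
I use the dictionary set up earlier between the Wehler $N$-fold $X$ and a hyperbolic $N$-orbifold $M=\mathbb H^N/\Gamma$, $\Gamma=\mathrm{Aut}(X)$ acting on $\mathbb H^N$ with fundamental domain the projectivized nef cone, the distinguished extremal effective classes (the partial sums of coordinate pullbacks, on the light cone and with zero volume) along which the chamber structure degenerates corresponding to the cusps of $M$; in the Klein model the segment $s\mapsto D+sA$ is exactly the geodesic ray from $o_A=[A]$ (in the thick part, for $A$ sufficiently ample) to $\xi_D=[D]\in\partial\mathbb H^N$, with $D+sA$ sitting at hyperbolic parameter $t(s)=\tfrac12(-\log s)+O(1)$. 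The key input is the volume estimate established earlier: for a suitable normalization there is an explicit constant $c=c(N)>0$ with
\[
\log\vol(D+sA)=\tfrac N2\log s\ +\ c\,e(t)\ +\ O_{A,D}(1),\qquad t=-\log s\to\infty,
\]
where $e(t)\ge 0$ is the depth into the union of the maximal cusp neighbourhoods of $M$ of the point $\gamma_{\xi_D}(t/2)$ (so $e(t)=0$ on the thick part, and $0\le e(t)\le\tfrac t2+O(1)$ since $o_A$ is in the thick part); the normalization is arranged so that, as $e(t)/t$ sweeps its admissible interval $[0,\tfrac12]$, the quotient $\frac{\log\vol(D+sA)}{\log s}=\tfrac N2-c\,\frac{e(t)}{t}+o(1)$ sweeps $[1,\tfrac N2]$, i.e.\ $c=N-2$. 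Thus \eqref{1}--\eqref{2} are equivalent to producing a limit point $\xi\in\partial\mathbb H^N$ whose geodesic ray $\gamma_\xi$ from $o_A$ satisfies
\[
\limsup_{t\to\infty}\frac{e(t)}{t}=\frac1c\Big(\frac N2-\delta\Big)\in\Big[0,\tfrac12\Big]
\qquad\text{and}\qquad
\liminf_{t\to\infty}\frac{e(t)}{t}=0,
\]
the second identity being precisely what forces the $\limsup$ in \eqref{2} to equal $\tfrac N2$.

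\medskip
\noindent\emph{Construction of $\xi$.}
This is the heart of the matter. Fix a cusp of $M$; in the upper half-space model with that cusp at $\infty$ and cusp group a lattice of translations on horospheres, an excursion of a geodesic reaching height $Y$ and returning is a semicircular arc of Euclidean diameter $\asymp Y$, producing an excursion of depth $\log Y+O(1)$ whose descending leg has hyperbolic length $\log Y+O(1)$. One builds $\gamma_\xi$ inductively: alternate passages through the thick part (of arbitrary lengths, which alone forces $\liminf e(t)/t=0$ since $e\equiv0$ there) with dives of increasing prescribed depths into $\Gamma$-translates of the fixed cusp, choosing the depths and the intervening passages so that $e(t)/t$ tends to $\tfrac1c(\tfrac N2-\delta)$ along the bottoms of the dives and is not exceeded elsewhere; this is possible because, having built $\gamma_\xi$ up to any point of the thick part, both a subsequent dive of any admissible depth and a subsequent thick-part passage can be realized by an explicit ping-pong word in the generators of $\Gamma$. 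Nesting the shadows of the successive segments produces a well-defined $\xi:=\lim\gamma_\xi(t)\in\partial\mathbb H^N$ realizing exactly the prescribed $\limsup$ and $\liminf$ of $e(t)/t$. Taking $D$ to be the pseudoeffective $\mathbb R$-divisor in the direction $\xi$ (so $D\in\partial\overline{\mathrm{Eff}}(X)$, hence $\vol(D)=0$) and $A$ sufficiently ample, the volume estimate yields \eqref{1} and \eqref{2}, and hence \eqref{3}--\eqref{4}.

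\medskip
\noindent\emph{The main obstacle.}
The crux is the construction of $\xi$: exhibiting one boundary direction whose geodesic ray performs cusp excursions at the prescribed asymptotic rate while still returning to the thick part often enough to keep $\liminf e(t)/t=0$, for every target $\delta\in[1,\tfrac N2]$. This is a quantitative shadowing problem entangled with the dimension $N$, the ranks and shapes of the cusps, and the combinatorics of the fundamental polyhedron of $\Gamma$; one must check both that excursions of every admissible depth are attainable and that the two competing requirements are jointly satisfiable. A second, indispensable point is the uniformity of the volume estimate: the error $O_{A,D}(1)$ must be bounded independently of the depths of the excursions, for otherwise the $\liminf$ and $\limsup$ would be polluted. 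This is why one works with the explicit piecewise-polynomial description of $\vol$ on the individual chambers, together with the bounded geometry of the thick part and of each cusp of $M$, to show that the reduction of $D+sA$ to its nef model inside a cusp is controlled uniformly in terms of the excursion depth.
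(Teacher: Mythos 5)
Your overall architecture is the same as the paper's: relate $\log\vol(D+sA)$ to the cusp-excursion depth of the associated geodesic ray via an estimate of the form $\log\vol = \tfrac N2\log s + (N-2)\,\phi + O(1)$, build the ray by concatenating cusp excursions of prescribed depths, and then transfer to $h^0$ via a two-sided comparison $h^0(X,\floor{mD}+A)\asymp m^N\vol(D+\tfrac1m A)$. However, there are two concrete gaps in the reduction of \eqref{3}--\eqref{4}. First, the upper bound $h^0(X,L)\leq C\vol(L)$ with $C$ uniform over the family $L=\floor{mD}+A$ is \emph{not} a general consequence of the Nakayama--Zariski decomposition; it is false for general big divisors (e.g.\ nef classes pulled back from a fibration have $\vol=0$ but many sections). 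The paper obtains it by first applying a pseudo-automorphism to move $\floor{mD}+A$ to a nef and big model (using that the movable cone is $W\cdot\ov{\cA}$), and then proving $h^0(X,G)\leq C_N (G^N)/N!$ for nef and big $G$ on a Wehler manifold by an explicit Riemann--Roch/Todd-class computation (\autoref{c2x}). You need some substitute for this step; it cannot be waved through. Second, passing from the $\liminf$/$\limsup$ over real $s\downarrow 0$ to the subsequence $s=1/m$, $m\in\bN$, only gives one-sided inequalities (e.g.\ $\liminf_{m}\geq\liminf_{s}$, hence $\limsup_m \log h^0/\log m\leq N-\delta$); to get equality you must verify that the extremizing sequences of $s$ are stable under bounded perturbation of $t=-\tfrac12\log s$, so that they can be shifted onto reciprocals of integers. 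The paper arranges this explicitly (\autoref{sommario} and \autoref{sssec:the_liminf}); your write-up silently identifies the two limits.

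On the construction of $\xi$, which you correctly identify as the heart of the matter, your sketch matches the paper's strategy (alternating excursions, shadowing/gluing of long geodesic segments with nearly aligned tangents, depths $\ell_n=L^n$ sweeping out the admissible $\limsup$). One point your ``ping-pong in $\Gamma$'' gloss misses: for $N\geq 3$ the group $W$ is not a lattice, the limit set is thin, and it is not automatic that from an arbitrary point of the thick part one can dive to an arbitrary prescribed depth and return. The paper circumvents this by performing the entire construction inside a totally geodesic $\bH^2$ corresponding to a rank-$3$ sub-Coxeter group, where the limit set is the full circle and parabolic points are dense, and then transporting the resulting endpoint. Finally, note that your heuristic identification $c=N-2$ is reverse-engineered from the target interval $[1,\tfrac N2]$ rather than derived; the actual input is the cusp asymptotic $\vol_N(a)\asymp {\rm Ht}_{ij}(a)^{N-2}$, which requires the explicit analysis of the fundamental domain near the parabolic rays $\omegaij$.
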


\begin{figure}
\centering
\includegraphics[scale=0.2]{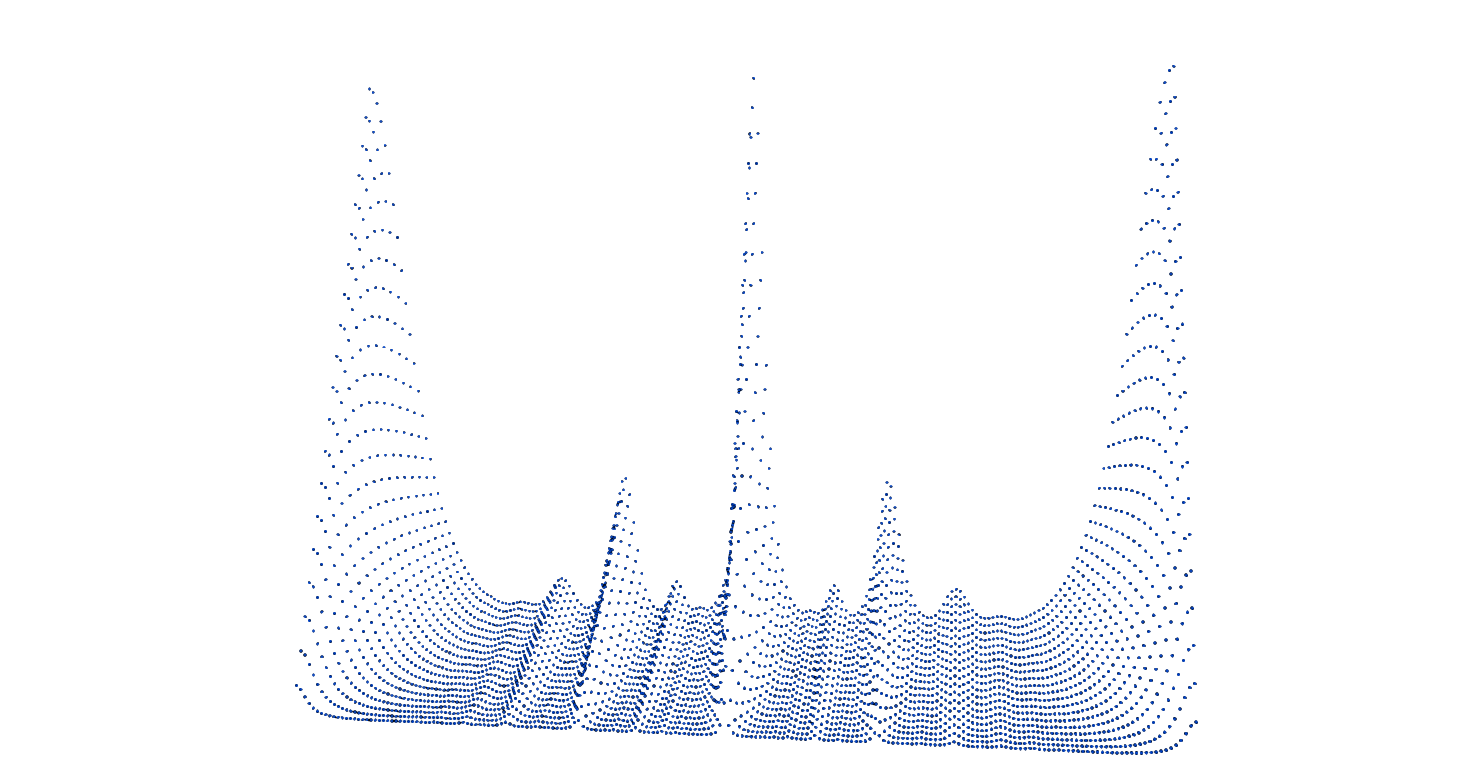}
\caption{Volume on the Wehler 3-fold}\label{fig}
\end{figure}

The manifolds $X$ in \autoref{main} are Calabi--Yau manifolds known as Wehler manifolds \cite{wehler}, which are general hypersurfaces in $(\mathbb{P}^1)^{N+1}$ of degree $(2,2,\dots,2)$. We will use some basic results about these manifolds which were proved by Cantat-Oguiso \cite{oguisocantat}.

\autoref{fig} is a numerical plot of the volume of $D_t +sA$ on a Wehler $3$-fold, where the variable $s\in [0,1]$ is plotted going ``into'' the picture, the $t$ variable is the horizontal axis, and $D_t$ parametrizes a family of pseudoeffective $\mathbb{R}$-divisors with volume zero. For the values of $t$ where $D_t$ is a $\mathbb{Q}$-divisor, $\vol(D_t+sA)$ grows linearly but with different slopes as $t$ varies, while in between these ``hills'' of linear growth one sees ``valleys'' of slower growth like $s^{\frac{3}{2}}$. In this case, the divisors $D$ constructed by \autoref{main} (with $\delta=1$) are such that $D+sA$ encounters infinitely many of such hills, and infinitely many of such valleys.

The fact that the $\limsup$ of volume (and $\liminf$ of $h^0$) appearing in \autoref{main} equal $N/2$ is, as it turns out, a general feature of the examples were consider:

\begin{theorem}[Limits for every divisor]
	\label{thm:limits_for_every_divisor}
	Suppose that $X$ is a general Wehler example Calabi--Yau $N$-fold.
	Then for every pseudoeffective $\bR$-divisor $D$ in the boundary of $\overline{\mathrm{Eff}}(X)$ which is not numerically trivial, there exists an integer $s(D)\in\{-(N-2),\dots, N-2\}$ and a real $\delta(D)\in \left[1, \tfrac{1}{2}(N-s(D))\right]$ such that
	\begin{align}
	\label{5}
	\liminf_{s \downarrow 0} \frac{\log \vol(D+sA)}{\log s} &= \delta,\\
	\label{6}
	  \limsup_{s \downarrow 0} \frac{\log \vol(D+sA)}{\log s} &= \frac{N-s(D)}{2},\\
	\label{7}
	\limsup_{m \to \infty} \frac{\log h^0(X,\floor{mD}+A)}{\log m} &= N-\delta,\\
	\label{8}
	\liminf_{m \to \infty} \frac{\log h^0(X,\floor{mD}+A)}{\log m} &=\frac{N+s(D)}{2}.
	\end{align}
\end{theorem}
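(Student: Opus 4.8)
The plan is to deduce everything from the dictionary between the volume function and cusp excursions of geodesics set up in the earlier sections, together with two softer inputs: the reduction of the section‑growth statements to the volume statements, and classical pigeonhole/recurrence phenomena for geodesics on a cusped hyperbolic manifold.

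\textbf{Step 1: reduce the $h^0$ statements to the volume statements.} I would first show that \eqref{7}--\eqref{8} follow from \eqref{5}--\eqref{6}. The point is a comparison
\[
C^{-1}\,m^N\,\vol\!\left(D+\tfrac1m A\right)\ \le\ h^0\!\left(X,\floor{mD}+A\right)\ \le\ C\,m^N\,\vol\!\left(D+\tfrac{c}{m}A\right),
\]
valid for $m$ large and suitable $c,C>0$: the lower bound is asymptotic Riemann--Roch for $\floor{mD}+A$ combined with Fujita approximation, and the upper bound bounds $h^0$ by the volume of a slightly larger $\mathbb Q$-divisor, using ampleness of $A$ to absorb the round-down and the higher cohomology. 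Since replacing $s$ by $cs$ does not affect $\log\vol(D+sA)/\log s$ in the limit, and since $\vol^{1/N}$ is concave (so $\vol(D+sA)$ is monotone enough to interpolate between $s=\tfrac1m$ and $s=\tfrac1{m+1}$), this yields $\limsup_m \log h^0/\log m = N-\liminf_{s\downarrow 0}\log\vol(D+sA)/\log s$ and $\liminf_m\log h^0/\log m = N-\limsup_{s\downarrow 0}\log\vol(D+sA)/\log s$, i.e. \eqref{7}$\,=\,N-$\eqref{5} and \eqref{8}$\,=\,N-$\eqref{6}. So it suffices to establish \eqref{5} and \eqref{6}.

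\textbf{Step 2: the divisor $\leftrightarrow$ geodesic dictionary and the invariant $s(D)$.} Here I would invoke the structure of Wehler $N$-folds (Cantat--Oguiso, and the cone-conjecture picture recalled earlier): $N^1(X)_\R$ carries an $\mathrm{Aut}(X)$-invariant form of signature $(1,N)$, the movable cone is tiled by $\Gamma$-translates of a rational polyhedral fundamental domain for a lattice $\Gamma\subset\mathrm{Isom}(\mathbb H^N)$, and $M=\mathbb H^N/\Gamma$ is a finite-volume hyperbolic $N$-orbifold whose finitely many cusps are indexed by the orbits of fibration contractions $X\to Z$ onto lower-dimensional Wehler varieties, each cusp carrying a ``type'' governed by the relative dimension of that contraction. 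A boundary divisor $D$ and ample $A$ determine the ray $s\mapsto D+sA$, whose projectivization limits to a point of $\partial\mathbb H^N$; running the $D+sA$-MMP exhibits the positive part of $D+sA$ as a $\Gamma$-translate back into the fundamental domain of a class at hyperbolic distance $t=t(s)\to\infty$, and the earlier sections identify $\log\vol(D+sA)/\log s$, up to $O(1/\log s)$, with a type-weighted function of how deeply, and into which cusps, the corresponding geodesic ray $\gamma_D$ in $M$ has travelled by time $t(s)$. I would then \emph{define} $s(D)$ as the combinatorial invariant of the minimal face $F$ of $\overline{\mathrm{Eff}}(X)$ containing $D$: writing $F$ (after an automorphism) as $\phi^*\overline{\mathrm{Eff}}(Z)$ for a fibration $\phi\colon X\to Z$ with $D=\phi^*D'$, the ray $\gamma_D$ is asymptotically confined to the sub-stratum attached to $\phi$, and $s(D)$ records, through the recursion on $\dim Z$ (which bottoms out at a ``generic'' boundary point of some Wehler factor, where \autoref{main} applies), which cusp type realizes the dominant excursions.

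\textbf{Step 3: the liminf \eqref{5}, and reduction of the problem to the limsup.} Once the limits above are known to exist, \eqref{5} is the assertion that $\delta(D):=\liminf_{s\downarrow 0}\log\vol(D+sA)/\log s\in[1,\tfrac12(N-s(D))]$. The lower bound $\delta(D)\ge 1$ is immediate: $\vol$ is locally Lipschitz on $N^1(X)_\R$ (Lazarsfeld) and $\vol(D)=0$, so $\vol(D+sA)\le C s$ for small $s$, whence $\log\vol(D+sA)/\log s\ge 1$. The upper bound $\delta(D)\le\tfrac12(N-s(D))$ is automatic from $\liminf\le\limsup$ once \eqref{6} holds. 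Thus the entire content of both theorems reduces to \eqref{6}: $\limsup_{s\downarrow 0}\log\vol(D+sA)/\log s=\tfrac12(N-s(D))$. Through Step 2, this is a \emph{deterministic} logarithm law --- for every boundary point $\xi$ in the relevant limit set, the extremal relative cusp excursion of $\gamma_\xi$ has the universal exponent $\tfrac12(N-s(D))$ --- which I would prove as two matching inequalities. For ``$\limsup\le\tfrac12(N-s(D))$'' one needs $\vol(D+sA)\ge c\,s^{\tfrac12(N-s(D))+\varepsilon}$ for all small $s$; after unwinding this is a Minkowski/Dirichlet pigeonhole statement --- every point of the sub-stratum attached to $\phi$ is approximated by the corresponding parabolic points to a guaranteed quality, forcing $\gamma_D$ to make infinitely often an excursion into a cusp of the dominant type that is, relative to elapsed time, at least as deep as Dirichlet guarantees --- combined with the fact that the cross-section of that cusp is, within the stratum, a torus of full rank, which is exactly what fixes the exponent at $\tfrac12(N-s(D))$. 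For ``$\limsup\ge\tfrac12(N-s(D))$'' one needs $\vol(D+s_jA)\le s_j^{\tfrac12(N-s(D))-\varepsilon}$ along a sequence $s_j\downarrow 0$; this follows from recurrence of $\gamma_D$ to the thick part of its stratum (equivalently, after $D=\phi^*D'$ the point $[D']$ is a radial limit point of the smaller Wehler factor, so its geodesic returns to the thick part infinitely often; the case where $[D']$ is itself parabolic is handled by one more step of the recursion).

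\textbf{Main obstacle.} The hard part is making the dictionary of Step 2 \emph{quantitatively exact}: controlling the $D+sA$-MMP and the divisorial Zariski decomposition so that the upper and lower estimates for $\vol(D+sA)$ are tethered to the \emph{same} geometric quantity --- the depth of $\gamma_D$ in a cusp of a definite type --- with errors that are genuinely $O(1)$, and doing so \emph{uniformly over every} $D\in\partial\overline{\mathrm{Eff}}(X)$, including the non-generic ones that force the recursion onto lower-dimensional Wehler varieties. The underlying combinatorial input --- the classification of the faces of $\overline{\mathrm{Eff}}(X)$ and the exact matching of cusp types to relative dimensions of contractions --- is itself a nontrivial piece of the geometry of Wehler manifolds on which all the above rests.
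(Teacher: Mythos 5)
Your overall architecture (reduce the $h^0$ statements to the volume statements, then read off the volume exponents from cusp excursions of an associated geodesic) matches the paper, but three of your load-bearing steps are not justified as written. First, in Step 1 the lower bound $h^0(X,\floor{mD}+A)\gtrsim m^N\vol(D+\tfrac{c}{m}A)$ does not follow from ``asymptotic Riemann--Roch combined with Fujita approximation'': those tools control $h^0(X,kL)$ for $k\gg 0$, not $h^0(X,L)$ for a single twist, and $\vol(L)>0$ does not in general force $h^0(X,L)$ to be comparable to $\vol(L)$. This is exactly why the paper proves an effective nonvanishing statement (\autoref{c2x}): it moves $\floor{mD}+A$ into the nef cone by a pseudo-automorphism and then computes the Todd class of a Wehler manifold explicitly to get $\tfrac{(G^N)}{N!}\le h^0(X,G)\le C_N\tfrac{(G^N)}{N!}$ for every nef and big $G$, with no passage to multiples. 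Without such an input (or the Birkar-based results of Jiang--Wang), your sandwich has no lower bound and \eqref{7}--\eqref{8} do not follow. (Your monotonicity interpolation for converting $\liminf_s$ into $\liminf_m$ is fine and is in fact slicker than the paper's tracking of extremal sequences.)

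Second, your Step 2 dictionary is structurally wrong for $N\ge 3$: the reflection group $W$ is not a lattice and $\cH^1/W$ has infinite volume, so a boundary ray need not limit to a point of the limit set at all --- it can exit the convex core into a funnel. The paper's key structural result, the divergent--recurrent decomposition $p=p_d+p_r$ (\autoref{thm:divergent_recurrent_decomposition_of_limit_points}, proved by induction on $N$ using the hyperplane arrangement $W\cdot\omega_i^\perp$), together with the algebraic reduction \autoref{prop:reduction_of_volume}, is what defines $s(D)=|S_d|$ and reduces the general case to a purely recurrent point in a lower-dimensional model; your definition of $s(D)$ via faces of $\overline{\mathrm{Eff}}(X)$ of the form $\phi^*\overline{\mathrm{Eff}}(Z)$ does not describe this boundary (which contains the fractal limit set) and essentially assumes the decomposition you would need to prove. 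Third, in Step 3 you attribute the inequality $\limsup_{s}\tfrac{\log\vol(D+sA)}{\log s}\le\tfrac12(N-s(D))$, i.e.\ the bound $\vol(D+sA)\ge c\,s^{(N-s(D))/2}$ valid for \emph{all} small $s$, to a Dirichlet-type statement about excursions occurring \emph{infinitely often}; an ``infinitely often'' mechanism cannot produce a ``for all $s$'' bound. The actual reason is soft: the normalized volume is bounded away from zero on the convex core (equivalently $\phi\ge 0$ in \autoref{prop:distance_to_basepoint_controls_volume} and \autoref{cor:boundedness_of_volumes_on_F1}), which immediately gives $\vol(a_s)\gtrsim s^{N/2}$ in the recurrent case. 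Your other inequality (recurrence to the thick part gives $\limsup\ge\tfrac12(N-s(D))$) is correct and matches the paper.
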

This result is a paraphrasing of \autoref{thm:volume_of_points} below, combined with the estimates in \autoref{sec:from_volume_to_sections} allowing us to analyze the behavior of $h^0$ in terms of $\vol$.

\subsection{Regularity of the volume function}
\subsubsection{$C^1$ differentiability}
We now discuss a consequence of \autoref{main} to the regularity of the volume function on the pseudoeffective cone of $X$. As mentioned earlier, in \cite[Theorem 2.2.44]{pag1} Lazarsfeld proved that the volume function $\vol:N^1(X)_{\mathbb{R}}\to \mathbb{R}_{\geq 0}$ is locally Lipschitz continuous, strictly positive in the big cone $\mathrm{Big}(X)$ and zero on its complement. More regularity was obtained by Boucksom-Favre-Jonsson \cite{bfj} and Lazarsfeld-Musta\c{t}\u{a} \cite{lazarsfeldmustata} who proved that $\vol$ is $C^1$ differentiable on the big cone, and in general not $C^2$ \cite[Example 2.2.46]{pag1}. The same $C^1$ differentiability statement was proved  by Witt Nystr\"om \cite{wittnystrom} for the volume function on $H^{1,1}(X,\mathbb{R})\supset N^1(X)_{\mathbb{R}}$ (as defined by Boucksom \cite{boucksom} using analysis) in the cone of big $(1,1)$-classes, and it is conjectured to hold for general compact K\"ahler manifolds \cite{BDPP}.
\subsubsection{Higher regularity}
The question of further regularity of the volume function has been highlighted for example in \cite[Remark 2.2.52]{pag1} and \cite[Conjecture 2.18]{ELMNP}. In private conversations with the third-named author in 2015, Lazarsfeld asked whether $\vol$ enjoys better regularity at the boundary of the pseudoeffective cone, and more precisely whether given any pseudoeffective $\mathbb{R}$-divisor $D$ with $\vol(D)=0$ and any ample $A$ the function $s\mapsto\vol(D+sA)$ is smooth or even real analytic for $s\in [0,\ve)$ for some $\ve>0$ (which is of course the case when $D$ is nef). As a corollary to \autoref{main} we show that this is not the case, and find its sharp regularity:

\begin{corollary}\label{coro}
Let $X$ be as in \autoref{main}, and let $D$ be a pseudoeffective $\mathbb{R}$-divisor given by that theorem setting $\delta=1$. Then the function
$$s\mapsto \vol(D+sA),$$
defined for $s\in [0,1]$ is $C^1$ differentiable on $[0,1]$ but it is not $C^{1,\alpha}$ on $[0,\ve)$ for any $\ve>0$ and any $\alpha>0$.
\end{corollary}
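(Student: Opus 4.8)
The plan is to reduce the Corollary to two standard facts about the volume function — its $C^1$-regularity on the big cone \cite{bfj,lazarsfeldmustata} and the Khovanskii--Teissier concavity of $\vol^{1/N}$ there — together with the $\liminf$/$\limsup$ computations \eqref{1}--\eqref{2} of \autoref{main}. Write $f(s)=\vol(D+sA)$ for $s\in[0,1]$. This function is continuous, non-decreasing (adding an ample class cannot decrease $h^0$, hence cannot decrease the volume), satisfies $f(0)=\vol(D)=0$, and $f(s)>0$ for $s>0$. For every $s>0$ the class $D+sA$ lies in the open big cone $\mathrm{Big}(X)$, so $f$ is $C^1$ on $(0,1]$ by \cite{bfj,lazarsfeldmustata}; the only delicate point is the boundary value $s=0$.

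To treat $s=0$, set $g(s)=f(s)^{1/N}$. Since $\vol^{1/N}$ is concave on $\mathrm{Big}(X)$ \cite{lazarsfeldmustata} and $f$ is continuous with $f(0)=0$, the function $g$ extends concavely to $[0,1]$ with $g(0)=0$, and it is $C^1$ on $(0,1)$ because $f$ is positive and $C^1$ there. Concavity applied at $y=0$ gives $g(0)\le g(s)+g'(s)(0-s)$, i.e.\ $g'(s)\le g(s)/s$, while monotonicity of $f$ gives $g'(s)\ge 0$; hence
\[
0\;\le\; f'(s)\;=\;N\,g(s)^{N-1}g'(s)\;\le\;N\,\frac{g(s)^{N}}{s}\;=\;N\,\frac{f(s)}{s}\qquad\text{for }0<s<1 .
\]
Consequently, once one knows that $f(s)=o(s)$ as $s\downarrow 0$, both $\tfrac{f(s)-f(0)}{s}\to 0$ and $f'(s)\to 0$ follow: thus $f$ is differentiable at $0$ with $f'(0)=0$ and $f'$ is continuous at $0$, so $f\in C^1([0,1])$. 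The bound $f(s)=o(s)$ is the one genuinely nontrivial ingredient, and it is \emph{not} a formal consequence of \eqref{1}--\eqref{2}, which only yield $s^{N/2+\varepsilon}\le f(s)\le s^{1-\varepsilon}$ for small $s$; it must come from the quantitative form of \eqref{1} established in the proof of \autoref{main}/\autoref{thm:volume_of_points}. Concretely, for the divisor $D$ with $\delta=1$ the cusp-excursion data underlying the construction show that the near-linear ``hills'' visible in \autoref{fig} are not genuinely linear, i.e.\ $f(s)\le C\,s\,\omega(s)$ with a slowly varying $\omega(s)\to 0$ (heuristically $f(s)\sim s/\log(1/s)$ along those scales). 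I expect this verification — that the estimates in the body are sharp enough to improve the crude $s^{1-\varepsilon}$ bound to $o(s)$ — to be the main obstacle; everything else here is soft.

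Finally, $f$ fails to be $C^{1,\alpha}$ near $0$ for every $\alpha>0$. Indeed, if $f\in C^{1,\alpha}([0,\varepsilon))$ for some $\varepsilon>0$, then using $f'(0)=0$ we get $|f'(s)|=|f'(s)-f'(0)|\le L\,s^{\alpha}$ on $[0,\varepsilon)$ for some $L>0$, whence
\[
f(s)=\int_{0}^{s}f'(u)\,du\;\le\;\frac{L}{1+\alpha}\,s^{1+\alpha}\qquad\text{for }s\in[0,\varepsilon),
\]
and therefore $\liminf_{s\downarrow 0}\tfrac{\log f(s)}{\log s}\ge 1+\alpha>1$, contradicting \eqref{1}, which asserts that this $\liminf$ equals $\delta=1$. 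Hence $s\mapsto\vol(D+sA)$ is $C^1$ on $[0,1]$ but is not $C^{1,\alpha}$ on $[0,\varepsilon)$ for any $\varepsilon>0$ and any $\alpha>0$.
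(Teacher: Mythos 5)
Your second half (the failure of $C^{1,\alpha}$) is essentially the paper's own argument: assuming $C^{1,\alpha}$ regularity and $f'(0)=0$, H\"older continuity of $f'$ forces $f(s)\leq Cs^{1+\alpha}$, contradicting \eqref{1} with $\delta=1$. The problem is the first half. Your route to $C^1$ at $s=0$ --- concavity of $\vol^{1/N}$ giving $0\le f'(s)\le Nf(s)/s$ and then squeezing --- is sound only \emph{conditional} on the bound $f(s)=o(s)$, and you correctly observe that this does not follow from \eqref{1}--\eqref{2}; but you then leave it unproved, and it genuinely cannot be extracted from what you have assembled. The quantitative estimates behind \autoref{main} (recorded in \autoref{sommario}) only give $C^{-1}s^{N/2}\le f(s)\le Cs$, and concavity of $f^{1/N}$ with $f(0)=0$ propagates the good bound $f(s_n)\le s_n^{N/2-\ve}$ into an upper bound for $f$ only within a bounded multiplicative window of $s_n$, while the sequence $s_n$ realizing the $\limsup$ in the construction is extremely sparse. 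So the key step of your proof is missing, and closing it by re-entering the cusp-excursion analysis (showing in effect that $\phi(\omega_t)-t\to-\infty$) is real work that you have not done.

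The paper avoids the issue entirely via \autoref{diff}: for \emph{any} pseudoeffective $D$, the formula $f'(s)=N\langle (D+sA)^{N-1}\rangle\cdot A$ of \cite{bfj,lazarsfeldmustata} together with the continuity of positive intersection products as $s\downarrow 0$ shows that $f'$ extends continuously to $s=0$ with value $N\langle D^{N-1}\rangle\cdot A$, and a fundamental-theorem-of-calculus argument upgrades this to differentiability at the endpoint; no a priori decay of $f$ is needed and $f'(0)$ need not vanish. For your specific $D$ one then gets $\langle D^{N-1}\rangle=0$ (hence $f'(0)=0$, hence your desired $f(s)=o(s)$) because otherwise $\vol(D+sA)\ge \langle (D+sA)^{N-1}\rangle\cdot sA\ge Cs$ would force the $\limsup$ in \eqref{2} to be at most $1$, contradicting its value $N/2\ge 3/2$. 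You should replace your concavity argument by this one, or at least use it to supply the missing $o(s)$ bound; note also that your $C^{1,\alpha}$ contradiction quietly invokes $f'(0)=0$, which at present rests on the same gap.
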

Of course this implies that $\vol$ is also not $C^{1,\alpha}$ at (the numerical class of) $D$ when viewed as a function on $\ov{\mathrm{Eff}}(X)$ or on
all of $N^1(X)_{\mathbb{R}}$.

In our result the lack of higher regularity happens at the boundary of the big cone, so it is natural to ask about the optimal regularity (in general) of the volume function inside the big cone. As mentioned above, $\vol$ is known to be $C^1$ in $\mathrm{Big}(X)$, and on the blowup of $\mathbb{P}^2$ at a point it is $C^{1,1}$ in $\mathrm{Big}(X)$ but not $C^2$, see \cite[Example 2.2.46]{pag1}.

\begin{question}\label{quest}
Is there $0<\alpha\leq 1$ such that for any smooth projective variety $X$, the volume function is $C^{1,\alpha}_{\rm loc}$ on $\mathrm{Big}(X)$? If yes, what is the supremum of such $\alpha$?
\end{question}

As far as we are aware, in all the known examples where $\vol$ can be computed explicitly, it is always piecewise analytic (and often piecewise polynomial, e.g. on toric manifolds and on surfaces), see \cite{BKS} and the discussion in \cite[Conjecture 2.18]{ELMNP}. Since a piecewise analytic $C^1$ function is $C^{1,1}_{\rm loc}$, this suggests that the answer to \autoref{quest} may be affirmative, with $\alpha=1$.\footnote{Junyu Cao and the third-named author have recently confirmed \autoref{quest} with $\alpha=1$. The details will appear elsewhere.}

\subsection{Numerical dimensions}
We discuss next some applications of \autoref{main} to the study of notions of numerical dimensions of pseudoeffective $\mathbb{R}$-divisors. For a nef $\mathbb{R}$-divisor $D$ the classical notion of numerical dimension is
\begin{equation}\label{num}
\nu(D)=\max\{k\in\mathbb{N}\ |\ (D^k)\neq 0 \text{ in }H^{k,k}(X,\mathbb{R})\},
\end{equation}
and a natural extension of \eqref{num} to general pseudoeffective $\mathbb{R}$-divisors was  proposed by \cite{BDPP}:
\[\nu_{\rm BDPP}(D)=\max\{k\in\mathbb{N}\ |\ \langle D^k\rangle\neq 0 \text{ in }H^{k,k}(X,\mathbb{R})\},\]
where $\langle\cdot\rangle$ denotes the positive intersection product \cite{BDPP, bfj}, which they also show agrees with \eqref{num} when $D$ is nef. However, as observed by Nakayama \cite{nakayama}, there are a number of other possible definitions that one can propose.
\subsubsection{Nakayama's $\kappa_\sigma$}
We recall here Nakayama's definitions \cite[V.2.5]{nakayama}: given a pseudoeffective $\mathbb{R}$-divisor $D$ and a sufficiently ample divisor $A$ he defined
$$\kappa_\sigma(D)=\max\left\{k\in\mathbb{N}\ \bigg|\ \limsup_{m\to+\infty}\frac{h^0(X,\floor{mD}+A)}{m^k}>0\right\},$$
$$\kappa^-_\sigma(D)=\max\left\{k\in\mathbb{N}\ \bigg|\ \liminf_{m\to+\infty}\frac{h^0(X,\floor{mD}+A)}{m^k}>0\right\},$$
$$\kappa^+_\sigma(D)=\min\left\{k\in\mathbb{N}\ \bigg|\ \limsup_{m\to+\infty}\frac{h^0(X,\floor{mD}+A)}{m^k}<\infty\right\}.$$
Clearly $\kappa^-_\sigma(D)\leq\kappa_\sigma(D)\leq \kappa^+_\sigma(D)$. Nakayama showed \cite[V.2.7]{nakayama} that these notions are all equal when $D$ is nef, and asked whether these are all equal in general \cite[Problem, p.184]{nakayama}.
The aforementioned example by the second-named author \cite{lesieutre} satisfies $\kappa^-_\sigma(D)=\kappa_\sigma(D)=1,\kappa^+_\sigma(D)=2,$ so $\kappa^+_\sigma$ is not necessarily equal to $\kappa_\sigma$.
\subsubsection{Real number extensions}
Furthermore, one can define real numbers $\kappa^{\mathbb{R},-}_\sigma(D)\leq\kappa^{\mathbb{R}}_\sigma(D)\leq \kappa^{\mathbb{R},+}_\sigma(D)$ with the same definitions as above but allowing $k\in\mathbb{R}$ and replacing max/min with sup/inf (see e.g. \cite{lesieutre,choipark}). Generalizing Nakayama's question, it is natural to then ask whether one always has
$$\kappa^{\mathbb{R},-}_\sigma(D)=\kappa^{\mathbb{R}}_\sigma(D)=\kappa^{\mathbb{R},+}_\sigma(D).$$
This does hold in the second-named author's example \cite{lesieutre}, where all these numbers are equal to $\frac{3}{2}$.

Lastly, following \cite{lehmann, eckl} we define one more type of numerical dimension by
$$\nu_{\rm Vol}(D)=\max\{k\in\mathbb{N}\ |\ \exists\,c>0\text{ s.t. }\vol(D+tA)\geq ct^{n-k},\,\text{for all }t>0\}.$$
In all previously known examples it was the case that $$\nu_{\rm vol}(D)=\kappa_\sigma(D),$$ and the second-named author asked in \cite[Remark 8]{lesieutre} whether this always holds. We can also ask a similar question for the ``real numbers'' versions of these (where $\nu^{\mathbb{R}}_{\rm vol}(D)$ is defined in the obvious way).

Our examples show that in general $\kappa^-_\sigma$ is not equal to $\kappa_\sigma$ either, that $\nu_{\rm vol}\neq \kappa_\sigma$,  and similarly for the ``real numbers'' versions of these:

\begin{corollary}\label{kappa}
Let \(X\) and $D$ be as in \autoref{coro}. Then we have
\begin{equation}\label{agogn1}
\kappa^-_\sigma(D)=\left\lfloor\frac{N}{2}\right\rfloor,\quad \kappa_\sigma(D)\geq N-2,\quad \kappa_\sigma^+(D)=N-1,
\end{equation}
\begin{equation}\label{agogn2}
\kappa^{\mathbb{R},-}_\sigma(D)=\frac{N}{2}, \quad \kappa^{\mathbb{R}}_\sigma(D)=\kappa_\sigma^{\mathbb{R},+}(D)=N-1,
\end{equation}
\begin{equation}\label{agogn3}
\nu_{\rm vol}(D)=\left\lfloor\frac{N}{2}\right\rfloor, \quad \nu^{\mathbb{R}}_{\rm vol}(D)=\frac{N}{2}.
\end{equation}
\end{corollary}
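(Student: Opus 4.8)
Let \(D\) be the divisor produced by \autoref{coro}, and write \(a_m=h^0(X,\floor{mD}+A)\) and \(v(s)=\vol(D+sA)\). Applying \autoref{main} with \(\delta=1\) gives
\[
\liminf_{s\downarrow 0}\frac{\log v(s)}{\log s}=1,\quad
\limsup_{s\downarrow 0}\frac{\log v(s)}{\log s}=\frac{N}{2},\quad
\limsup_{m\to\infty}\frac{\log a_m}{\log m}=N-1,\quad
\liminf_{m\to\infty}\frac{\log a_m}{\log m}=\frac{N}{2}.
\]
The plan is to substitute these into the definitions of the numerical dimensions, using the elementary dictionary: for positive reals \(b_m\), setting \(L^{-}=\liminf_m\frac{\log b_m}{\log m}\) and \(L^{+}=\limsup_m\frac{\log b_m}{\log m}\), one has \(\liminf_m b_m/m^k=+\infty\) for \(k<L^{-}\) and \(=0\) for \(k>L^{-}\), and \(\limsup_m b_m/m^k=+\infty\) for \(k<L^{+}\) and \(=0\) for \(k>L^{+}\); likewise, writing \(\ell^{+}=\limsup_{s\downarrow 0}\frac{\log v(s)}{\log s}=\tfrac{N}{2}\), for every \(\beta>\ell^{+}\) there is \(c>0\) with \(v(t)\ge c\,t^{\beta}\) for all small \(t>0\), while no such \(c\) exists when \(\beta<\ell^{+}\).

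First I would dispatch everything that is a formal consequence of these asymptotics. From the \(a_m\)-asymptotics I get \(\kappa^{\mathbb{R},-}_\sigma(D)=\tfrac{N}{2}\) and \(\kappa^{\mathbb{R},+}_\sigma(D)=N-1\); since already \(\limsup_m a_m/m^k=+\infty\) for every \(k<N-1\), also \(\kappa^{\mathbb{R}}_\sigma(D)=N-1\), \(\kappa^{+}_\sigma(D)\ge N-1\), and \(\kappa_\sigma(D)\ge N-2\). From the \(v\)-asymptotics, taking \(\beta=N-k\), I get \(\nu^{\mathbb{R}}_{\rm vol}(D)=\tfrac{N}{2}\). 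Because each integer invariant is bounded above by its real counterpart, \(\kappa^{-}_\sigma(D)\le\floor{N/2}\), \(\nu_{\rm vol}(D)\le\floor{N/2}\) and \(\kappa_\sigma(D)\le\kappa^{+}_\sigma(D)\). When \(N\) is odd this already settles the last two integer invariants: the integer \(k=\tfrac{N-1}{2}=\floor{N/2}\) lies strictly below \(\tfrac{N}{2}\), so \(\liminf_m a_m/m^{(N-1)/2}=+\infty\) and \(v(t)\ge t^{\,N-(N-1)/2}\) for small \(t\), whence \(\kappa^{-}_\sigma(D)=\nu_{\rm vol}(D)=\floor{N/2}\); and the stated bound \(\kappa_\sigma(D)\ge N-2\) is sharp in the sense that deciding between \(N-2\) and \(N-1\) amounts to deciding whether \(\limsup_m a_m/m^{N-1}>0\), which the logarithmic data do not determine.

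What is left is precisely the behaviour at the integer thresholds, and here the genuine content of the paper is needed rather than soft manipulation: the identity \(\kappa^{+}_\sigma(D)=N-1\) requires the two-sided upper bound \(a_m\le C\,m^{N-1}\) valid for all \(m\) (the equality \(\limsup_m\frac{\log a_m}{\log m}=N-1\) alone only yields \(a_m\le m^{N-1+\varepsilon}\) eventually), and, when \(N\) is even, the identities \(\kappa^{-}_\sigma(D)=\nu_{\rm vol}(D)=\tfrac{N}{2}\) require the matching lower bounds \(a_m\ge c\,m^{N/2}\) and \(v(t)\ge c\,t^{N/2}\) for all small \(m,t\) --- equivalently, that the ``valleys'' of the volume function are comparable to \(s^{N/2}\) from \emph{both} sides, not merely up to a factor \(s^{\pm\varepsilon}\). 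I would obtain these sharp two-sided estimates for \(\vol(D+sA)\) directly from \autoref{thm:volume_of_points}, and transport them to \(h^0(X,\floor{mD}+A)\) (with multiplicative, not merely logarithmic, constants) via the comparison of \autoref{sec:from_volume_to_sections}; granting those, the corollary follows from the bookkeeping above. Thus the main obstacle does not lie in the present statement at all: it is exactly the passage from the logarithmic asymptotics \eqref{1}--\eqref{4} to the two-sided estimates of \autoref{thm:volume_of_points} and their transfer to \(h^0\), after which \autoref{kappa} is a purely arithmetic unwinding.
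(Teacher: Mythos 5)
Your plan is correct and follows essentially the same route as the paper: the soft consequences of the logarithmic asymptotics are handled exactly as you describe, and the remaining integer-threshold cases are settled in the paper by the sharp two-sided bounds \eqref{cuz1}--\eqref{cuz4} recorded in \autoref{sommario} (i.e.\ $\vol(D+sA)\geq C^{-1}s^{N/2}$ and $\leq Cs$ for all small $s$, with matching subsequence bounds), transported to $h^0$ via the two-sided comparison \eqref{optimum}. The only detail worth flagging is that transferring the subsequence bounds from continuous $s$ to integer $m$ uses the stability of those estimates under bounded perturbations of $t=-\tfrac12\log s$, which \autoref{sommario} provides.
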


The reader is referred to \cite{choipark} for a recent overview of general inequalities relating these (and more) different notions of numerical dimensions, including examples (based on \cite{lesieutre}) showing that $\nu_{\rm BDPP}$ is also in general different from $\nu_{\rm vol}$ and from $\kappa_\sigma$ \cite[Example 4.2]{choipark}.

\subsubsection{A conjecture of Fujino}
On a related note, motivated by the second-named author's results in \cite{lesieutre}, Fujino posed the following conjecture \cite[Conjecture 1.4]{fujino}:

\begin{conjecture}\label{cojone}
Let $X$ be a smooth projective variety over $\mathbb{C}$, and $D$ a pseudoeffective $\mathbb{R}$-divisor on $X$. Then there exist $m_0\geq 1, c>0$ and an ample divisor $A$ such that for all $m$ sufficiently large we have
\begin{equation}\label{pirla}
h^0(X,\floor{mm_0 D}+A)\geq cm^{\kappa_\sigma(D)}.
\end{equation}
\end{conjecture}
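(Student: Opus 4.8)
The plan is to show that \autoref{cojone} is \emph{false}, by extracting a counterexample from the family in \autoref{main}. Fix any $N\geq 5$, let $X$ be the corresponding Wehler $N$-fold, and let $D$ be the pseudoeffective $\R$-divisor produced by \autoref{main} with $\delta=1$. Then \eqref{4} gives
\[\liminf_{m\to\infty}\frac{\log h^0(X,\floor{mD}+A)}{\log m}=\frac N2\]
for every sufficiently ample $A$, while \autoref{kappa} gives $\kappa_\sigma(D)\geq N-2$. Since $N\geq 5$ we have the strict gap $\tfrac N2<N-2\leq \kappa_\sigma(D)$, and the whole argument is devoted to turning this gap into a contradiction with \eqref{pirla}. (A single value of $N$ suffices, since the conjecture is a statement about every $X$.)

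Suppose \autoref{cojone} held for this $D$: then there are $m_0\geq 1$, $c>0$ and an ample $A$ with $h^0(X,\floor{mm_0D}+A)\geq cm^{\kappa_\sigma(D)}$ for all large $m$, hence $\liminf_{m\to\infty}\tfrac{\log h^0(X,\floor{mm_0D}+A)}{\log m}\geq \kappa_\sigma(D)\geq N-2$. I may freely enlarge $A$: writing $A'=A+H$ with $H$ very ample, multiplication by a section of $H$ gives $h^0(X,\floor{mm_0D}+A)\leq h^0(X,\floor{mm_0D}+A')$, so it is enough to bound the above $\liminf$ from above for $A$ as ample as we wish. The conjecture is therefore refuted once we prove that for every integer $m_0\geq 1$ and every sufficiently ample $A$,
\[\liminf_{m\to\infty}\frac{\log h^0(X,\floor{mm_0D}+A)}{\log m}\ \leq\ \frac N2\ <\ N-2.\]

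The core of the matter is thus to show that the ``valleys'' witnessed by \eqref{4} persist along \emph{every} arithmetic progression $m_0\mathbb{Z}$, not just along the full sequence of $m$. The mechanism I would use is as follows. Because of the ample twist by $A$, the quantity $\log h^0(X,\floor{m'D}+A)$ depends on $m'$, up to an error that is $o(\log m')$, only through the real point $m'D$, and the volume-to-sections estimates (through \autoref{thm:limits_for_every_divisor}) identify it, again up to $o(\log m')$, with a function of the hyperbolic time $t=\log m'$ measuring the cusp depth of the associated geodesic ray, which is slowly varying in $t$. On the dynamical side, the $\limsup=\tfrac N2$ in \eqref{2} being strictly larger than $\delta=1$ forces that geodesic to make infinitely many cusp excursions, each occupying a time window of length at least some fixed $\eta>0$; pulling back through $m'=e^{t}$ these windows become integer intervals $[a_k,b_k]$ with $b_k-a_k\geq a_k(e^{\eta}-1)\to\infty$, on which $h^0(X,\floor{m'D}+A)\leq C(m')^{N/2+o(1)}$ uniformly. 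As soon as $b_k-a_k\geq m_0$ the progression $m_0\mathbb{Z}$ meets $[a_k,b_k]$, so there are integers $m'=mm_0$ with $h^0(X,\floor{mm_0D}+A)\leq C(mm_0)^{N/2+o(1)}$, which yields $\liminf_m\tfrac{\log h^0(X,\floor{mm_0D}+A)}{\log m}\leq \tfrac N2$, as needed.

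I expect the main obstacle to be making precise the two ``uniformity'' inputs just invoked: first, a one-sided upper bound $h^0(X,\floor{m'D}+A)\leq C(m')^{N/2+\varepsilon}$ holding uniformly for all $m'$ whose hyperbolic time lies in a cusp-excursion window, not merely at its deepest point — this is exactly the shape of estimate the volume-to-sections comparison should supply — and second, the non-lacunarity of the excursion times (windows of length bounded below), coming from the geodesic dynamics underlying \autoref{main}. Once these are in place, the remaining divisibility bookkeeping is routine.
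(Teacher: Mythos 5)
Your proposal is correct and takes essentially the same route as the paper: Corollary~\ref{scemo} is proved by combining the sections-to-volume comparison \eqref{optimum} with the bound $\kappa_\sigma(D)\geq N-2$, and then observing (exactly as you do) that the valley bound \eqref{cuz4} persists on hyperbolic-time windows of fixed width, which in the $m$-variable become integer intervals of length tending to infinity and hence must meet every progression $m_0\mathbb{Z}$, yielding a contradiction for $N\geq 5$. The two ``uniformity inputs'' you flag are precisely what the paper supplies in \autoref{sommario} and \eqref{optimum}.
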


Again as a corollary of \autoref{main} we will see that this conjecture fails:

\begin{corollary}\label{scemo}
\autoref{cojone} does not hold on any Wehler $N$-fold, $N\geq 5$.
\end{corollary}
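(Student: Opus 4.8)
The plan is to derive Corollary~\ref{scemo} directly from the explicit numerics of Corollary~\ref{kappa}, which in turn rests on Theorem~\ref{main}. Recall that Fujino's Conjecture~\ref{cojone} predicts that for some $m_0\geq 1$, $c>0$ and ample $A$ one has $h^0(X,\floor{mm_0D}+A)\geq c\,m^{\kappa_\sigma(D)}$ for all $m$ large. The key observation is that passing from $D$ to $m_0D$ does not change the logarithmic growth exponent of $h^0$: indeed, $\floor{mm_0D}+A$ is the same kind of quantity as $\floor{m'D}+A$ with $m'=mm_0$ running over an arithmetic progression, and since $\limsup$ and $\liminf$ of $\frac{\log h^0(X,\floor{m'D}+A)}{\log m'}$ over all $m'$ coincide with those over the subsequence $m'=mm_0$ (the function $m'\mapsto h^0(X,\floor{m'D}+A)$ is, up to bounded multiplicative error coming from $A$ sufficiently ample, comparable for nearby values of $m'$ — or more cleanly, one simply notes $\liminf_{m}\frac{\log h^0(X,\floor{mm_0D}+A)}{\log m}=\liminf_{m}\frac{\log h^0(X,\floor{mm_0D}+A)}{\log(mm_0)}$ since $\log(mm_0)/\log m\to 1$, and the right side is $\geq\liminf_{m'}\frac{\log h^0(X,\floor{m'D}+A)}{\log m'}=\kappa^-_\sigma(D)$ as a real exponent). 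So Fujino's inequality \eqref{pirla} would force
\[
\liminf_{m\to\infty}\frac{\log h^0(X,\floor{mm_0D}+A)}{\log m}\geq \kappa_\sigma(D).
\]

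Now I take $X$ a Wehler $N$-fold with $N\geq 5$ and $D$ the divisor supplied by Corollary~\ref{coro}/Corollary~\ref{kappa} (so $\delta=1$). By \eqref{4} of Theorem~\ref{main} (equivalently the first equality of \eqref{agogn2}), the real $\liminf$ on the left equals $\frac{N}{2}$, while by \eqref{3} with $\delta=1$ we have $\limsup_{m}\frac{\log h^0}{\log m}=N-1$, and correspondingly $\kappa_\sigma(D)\geq N-2$ from \eqref{agogn1}. Therefore Fujino's conjecture would demand $\frac{N}{2}\geq \kappa_\sigma(D)\geq N-2$, i.e. $\frac{N}{2}\geq N-2$, which is equivalent to $N\leq 4$. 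For $N\geq 5$ this is a contradiction, so the conjecture fails on every such $X$. One should double-check the integer-versus-real subtlety: Fujino's $\kappa_\sigma(D)$ is the integer invariant, and $\kappa_\sigma(D)\geq N-2$ with $N\geq 5$ gives $\kappa_\sigma(D)\geq 3>\lfloor N/2\rfloor$ already for, say, $N=5,6$; more robustly, for all $N\geq 5$ one has $N-2>N/2$, and the displayed inequality $\liminf_m\frac{\log h^0(X,\floor{mm_0D}+A)}{\log m}=\frac{N}{2}$ (a real number, independent of $m_0$) is strictly less than the integer $\kappa_\sigma(D)\geq N-2$, which already contradicts \eqref{pirla} since \eqref{pirla} forces that $\liminf$ to be $\geq\kappa_\sigma(D)$.

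The main obstacle — really the only nontrivial point — is justifying that replacing $D$ by $m_0 D$ (an arbitrary positive integer multiple, with possibly a different ample twist $A$) cannot rescue the conjecture, i.e. that the real $\liminf$-exponent of $h^0(X,\floor{mm_0D}+A)$ in $m$ is unchanged and still equals $\tfrac N2$. For the lower direction this is immediate as above since $\log(mm_0)\sim\log m$. For the relevant upper direction one must argue $\liminf_m\frac{\log h^0(X,\floor{mm_0D}+A)}{\log m}\leq\frac N2$: here I would invoke that Theorem~\ref{thm:limits_for_every_divisor} (via \autoref{thm:volume_of_points} and the estimates of \autoref{sec:from_volume_to_sections}) computes these $\liminf$/$\limsup$ exponents intrinsically from the geometry of $D$ — in particular they are insensitive to the choice of sufficiently ample $A$ and to passing to a positive multiple of $D$, since $m_0D$ lies on the same ray through the origin in $N^1(X)_{\mathbb R}$ as $D$ and $\vol$, $h^0$ exponents depend only on that ray (the $\liminf$ exponent of $\vol((m_0D)+sA)$ in $s$ equals that of $\vol(D+sA)$ after the substitution $s\mapsto s/m_0$). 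Thus the exponent $\frac N2$ in \eqref{4}/\eqref{8} is genuinely a property of the ray $\mathbb R_{>0}D$, and no choice of $m_0$ or $A$ in Conjecture~\ref{cojone} can make \eqref{pirla} hold, completing the proof.
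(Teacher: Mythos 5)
Your proof is correct, and it reaches the contradiction by a genuinely different packaging than the paper's. Both arguments turn on the same nontrivial point, which you correctly isolate: \eqref{pirla} forces $\liminf_m \tfrac{\log h^0(X,\floor{mm_0D}+A)}{\log m}\geq \kappa_\sigma(D)\geq N-2$, and one must show that this liminf is actually $\tfrac N2$, i.e.\ that restricting to the arithmetic progression $m'=mm_0$ does not raise the liminf-exponent (your ``cleanly'' remark only yields the lower bound $\geq \tfrac N2$, which plays no role in the contradiction). You resolve the upper bound by applying \autoref{thm:limits_for_every_divisor} to the divisor $m_0D$ and observing that $s(\cdot)$ and the $h^0$-exponents depend only on the ray $\mathbb{R}_{>0}D$, by homogeneity of $\vol$ and of the divergent--recurrent decomposition. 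The paper instead stays at the level of the volume function: it converts $h^0(X,\floor{mm_0D}+A)$ into $m^N\vol\left(D+\tfrac{1}{mm_0}A\right)$ via \eqref{optimum}, and then uses the stability recorded in \autoref{sommario} --- the bound \eqref{cuz4} persists under $O(1)$ perturbations of $t=-\tfrac12\log s$ --- together with the fact that the points $\tfrac12\log(mm_0)$ eventually fall within each interval $[\ti t_n-1,\ti t_n+1]$, to extract a subsequence $m_j$ with $\vol\left(D+\tfrac{1}{m_jm_0}A\right)\leq Cm_j^{-N/2}$, contradicting $\geq C^{-1}m_j^{N-2}$ when $N\geq 5$. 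Your route is shorter as a deduction but leans on \autoref{thm:limits_for_every_divisor} for the auxiliary divisor $m_0D$, whose proof the paper only sketches in general; the paper's route uses only the concretely established estimates for the specific $D$. One small point to make explicit: the $A$ in \autoref{cojone} is merely ample, so one should note, as the paper does, that \eqref{pirla} survives enlarging $A$ to a sufficiently ample divisor before invoking any of the $h^0$-estimates.
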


\subsection{Idea of proof}

\subsubsection*{Hyperbolic manifolds}
The analysis of the volume function is facilitated by the fact that it is invariant under pseudo-automorphisms of $X$ and Wehler examples admit a large group of these, namely $\PsAut(X)$ is equal to the free product $W$ of $N+1$ copies of $\bZ/2$.
It was observed by Cantat--Oguiso \cite{oguisocantat} that $W$ preserves a quadratic form of signature $(1,N)$ on the \Neron--Severi group of $X$.
One can then consider the action of $W$ on the corresponding hyperbolic space $\bH^N$ and form the quotient, a hyperbolic manifold $\cM:=\bH^N/W$ which is of infinite volume as soon as $N\geq 3$.
The group and manifold are nevertheless geometrically finite and we have a ``convex core'' $\cM^{cc}\subset \cM$, see \autoref{ssec:some_further_cones_and_some_dynamics} for these notions.
Furthermore, $\cM^{cc}$ is not compact but rather admits a decomposition into a compact subset and finitely many cusps, see \autoref{ssec:calculations_in_the_cusp}.
Additionally, the complement $\cM\setminus \cM^{cc}$ consists of infinite-volume ``funnels''.

\subsubsection*{Hyperbolic geodesics}
Similarly to \cite{FT}, our analysis proceeds by associating to a path of divisors $D+sA$ with $s\in[0,1]$ a hyperbolic geodesic $\gamma_t$ on the manifold $\cM$, and vice-versa.
When $D$ is on the boundary, this leads to an infinite geodesic ray so $t\in [t_0,+\infty)$.
The behavior of the volume function is then naturally related to the behavior of the geodesic: large values of the volume correspond to deep excursions into a cusp.
With this principle understood, it is immediate to produce hyperbolic geodesics along which the volume oscillates as desired in \autoref{main}.

Furthermore, every infinite to one side geodesic ray in $\cM^{cc}$ is either recurrent (returns infinitely often to a compact part) or eventually goes out into a cusp.
This dichotomy is, in particular, responsible for \autoref{thm:limits_for_every_divisor}: when the geodesic goes into a cusp, the calculation of volume is explicit, while in the recurrent case it always achieves low values along a subsequence.

\subsubsection*{Inductive structure}
The above outline applies to divisors $D$ such that the associated hyperbolic geodesic is confined to $\cM^{cc}$.
To analyze an arbitrary $D$, we introduce in \autoref{thm:divergent_recurrent_decomposition_of_limit_points} a ``divergent--recurrent'' decomposition of $D=D_d+D_r$ that reduces the analysis of the volume function along $D+sA$ to that along $D_r + sA$, which in turn is related to a recurrent geodesic ray in $\cM^{cc}$.

Specifically, $D_r$ is a divisor whose associated hyperbolic geodesic ray stays in $\cM^{cc}$, while $D_d$ is the $W$-image of a linear combination of hyperplane sections.
The geodesic ray associated to $D_d$ leaves $\cM^{cc}$ and goes out into a ``funnel'' of $\cM\setminus \cM^{cc}$, but along which the behavior of volume (and its influence on the volume of $D$) is analyzed by an elementary calculation.
The precise statement is contained in \autoref{thm:volume_of_points}.

\subsubsection*{Scaling}
By construction, the volume function of an $N$-fold is $N$-homogeneous, namely $\vol(\lambda B)=\lambda^N \vol(B)$.
When passing between linear paths $D+sA$ and hyperbolic geodesics $\gamma_t$, the scaling of volume has to be taken into account and it is interesting to note that $N/2$ appears as the natural symmetry point.

\subsubsection*{Cone Conjecture}
It is expected \cite{Morrison_Compactifications-of-moduli-spaces-inspired1993,Kawamata1997_On-the-cone-of-divisors-of-Calabi-Yau-fiber-spaces,LOP} that Calabi--Yau manifolds have large groups of (pseudo-)automorphisms which control their pseudoeffective cones, in particular have a finite-sided polyhedral fundamental domain.
In cases where such a property holds, we expect that the methods of this paper can be used to analyze the behavior of volume for all points on the boundary as well.

\subsection*{Acknowledgments}
We are grateful to Yuan Fu, Heming Liu and Junsheng Shi for helping with numerical investigations on the volume on Wehler $3$-folds, and for \autoref{fig}, and to the referee for useful suggestions. The first-named author is grateful to Jeff Danciger for conversations related to the convex core of hyperbolic manifolds associated to Coxeter groups.  The second-named is grateful to Serge Cantat for a useful discussion.
The third-named author is also grateful to Rob Lazarsfeld for inspiring conversations. The authors were partially supported by NSF grants DMS-2142966 (JL), DMS-2005470, DMS-2305394 (SF), DMS-2231783, DMS-2404599 (VT), and a Sloan fellowship (JL).


\section{Notation and the basic objects}
	\label{sec:notation_and_the_basic_objects}

\paragraph{Outline of section}
In this section we introduce some notation for the basic objects of interest, which include various positive cones in the cohomology of a Wehler $N$-fold, and give a decomposition of points on the boundary of the big cone as a sum of a ``divergent'' part and a ``recurrent'' part in \autoref{thm:divergent_recurrent_decomposition_of_limit_points}.

\subsection{Wehler manifolds}\label{ssec:wehler}
Throughout this paper, $X$ will denote a Wehler $N$-fold, $N\geq 2$, which is a general hypersurface in $(\mathbb{P}^1)^{N+1}$ of degree $(2,2,\dots,2)$. By adjunction we see that $X$ is Calabi--Yau. It is important to keep in mind that the case $N=2$, where $X$ is then a $K3$ surface, behaves differently from the case $N\geq 3$.

$X$ has $N+1$ projections to $(\mathbb{P}^1)^{N}$, by forgetting one of the factors, which are all ramified $2:1$ covers, and we let $\sigma_i, 0\leq i\leq N,$ be the birational covering involutions of $X$. Since $X$ is Calabi--Yau, these are pseudo-automorphisms of $X$, i.e. isomorphisms in codimension $1$. In particular, each $\sigma_i$ acts on $N^1(X)_{\mathbb{R}}$ by pulling back divisors. Furthermore, thanks to \cite[Theorem 1.3]{oguisocantat}, the birational automorphism group of $X$ is isomorphic to the free product $W$ of $N+1$ copies of $\mathbb{Z}/2$, generated by $\sigma_i, 0\leq i\leq N$.  The N\'eron-Severi group $N^1(X)$ is of rank $N+1$, generated by the restriction of the $N+1$ hyperplane classes from the $\mathbb{P}^1$ factors.


\subsection{Bases and some cones}
	\label{ssec:bases_and_normalizations_for_the_quadratic_form}

\subsubsection{Setup}
	\label{sssec:setup_bases_and_normalizations_for_the_quadratic_form}
We take $N\geq 2$ for this discussion, with the caveat that when $N=2$ some of the faces in \autoref{prop:convex_core_fundamental_domain_properties} consist of one vector only and the algebro-geometric interpretation of the cones in \autoref{sssec:algebro_geometric_dictionary} is a bit different as well.
Note that our notation follows Cantat--Oguiso \cite{oguisocantat} but uses $\omega_i$ instead of the $c_i$ for the dual basis.

\subsubsection{Basis of hyperplane sections}
	\label{sssec:basis_of_hyperplane_sections}
Let $\omega_0,\ldots, \omega_N\in N^1(X)$ be the basis given by the hyperplane sections.
The quadratic form introduced by Cantat--Oguiso is, up to dividing by ${2(N-1)}$, given by the following bilinear form (see \cite[\S2.2.3]{oguisocantat}):
\begin{align}
	\label{eqn:quadraticform_omega_basis}
	\begin{cases}
	\ip{\omega_i, \omega_i} = - (N-2) & \forall i\\
	\ip{\omega_i,\omega_j}  = 1       & \forall i\neq j
	\end{cases}
\end{align}
It is nondegenerate of signature $(1,N)$.

\subsubsection{Duals to hyperplane sections}
	\label{sssec:duals_to_hyperplane_sections}
Let $\alpha_0,\ldots,\alpha_N\in N^1(X)$ be defined by
\begin{align}
	\label{eqn:c_in_alpha_basis}
	\begin{split}
	\alpha_i & := \omega_0 + \cdots + \omega_{i-1} - \omega_i + \omega_{i+1} + \cdots + \omega_N\\
	u & := \omega_0 + \phantom{ \cdots + \omega_{i-1} +} \cdots\phantom{+ \omega_i - \omega_{i+1} + } + \omega_N\\
	& \text{ so that:}\\
	\alpha_i & = u - 2\omega_i
	\end{split}
\end{align}
Then we have $\ip{\omega_i,\alpha_j}=2(N-1)\delta_{ij}$ and $\{\alpha_i\}$ are (up to a factor) the basis dual with respect to the bilinear form.
We have the basic identities:
\begin{align*}
	\ip{\alpha_i,\alpha_i}=-2(N-1)<0 \text{ and } \ip{\alpha_i,\alpha_j}=2(N-1)>0
	\quad i\neq j.
\end{align*}
Furthermore $\ip{u,u}=2(N+1)$ while $\ip{u,\omega_i}=2$.

\subsubsection{The reflections}
	\label{sssec:the_reflections}
We will make use of the following involutions on $N^1(X)$, which clearly preserve the bilinear form as they are given by orthogonal reflections in $\alpha_i$:
\begin{align}
	\label{eqn:formula_for_tau_reflections}
	\begin{split}
	\sigma_i(v) & := v + \frac{\ip{v,\alpha_i}}{N-1}\alpha_i \text{ or explicitly:}\\
	\sigma_i(\omega_j) & =
	\begin{cases}
		\omega_j &\text{if }i\neq j\\
		\omega_i + 2 \alpha_i & \text{if }i=j
	\end{cases}
	\end{split}
\end{align}
Let $W\subset \bbO(N^1(X))$ be the group generated by the reflections $\{\sigma_i\}$.
These $N+1$ generators satisfy no relations other than $\sigma_i^2=\id$ so we have that $W\isom \bZ/2 * \cdots * \bZ/2$.
These reflections coincide with the action by pullback of the covering birational involutions of $X$ discussed in \autoref{ssec:wehler}, see \cite[Theorem 3.3]{oguisocantat}.

\subsubsection{Cones and Duality}
	\label{sssec:cones_and_duality}
We will be interested in several convex cones inside $N^1(X)$; a cone will refer to any subset invariant under scaling by $\bR_{>0}$.
Our cones will always be \emph{properly} convex, meaning that they are convex and do not contain any line through the origin.

Recall that for any cone $\cL\subset N^1(X)$, its dual $\cL^{*}\subset N^1(X)$ with respect to the bilinear form consists of all vectors $v$ such that $\ip{v,t}\geq 0, \forall t\in \cL$.
Note also that with this definition, the dual of a cone is always closed, even if the original wasn't.
The closure of a cone $\cL$ (or any other subset) will be denoted by $\ov{\cL}$. The Hahn-Banach theorem implies that $\cL^{**}=\ov{\cL}$.

\subsubsection{The basic simplicial cones}
	\label{sssec:the_basic_simplicial_cones}
Denote by $\cA$ the open cone in $N^1(X)$ spanned by the $\{\omega_i\}$, i.e. consisting of all vectors $\sum_i x_i \omega_i$ with $x_i> 0$.
Let also ${\cC}$ denote the open cone in $N^1(X)$ spanned by the classes $\{\alpha_i\}$, i.e. consisting of all vectors $\sum_i x_i \alpha_i$ with $x_i> 0$.
Then its closure $\ov{\cC}$ is dual to $\ov{\cA}$.

\subsubsection{The Tits and minimal cones}
	\label{sssec:the_tits_and_minimal_cones}
Consider the cone $\cT:=W\cdot \ov{\cA}$, which is neither open nor closed.
In the literature on Coxeter groups, this is also called the \emph{Tits cone}, hence the reason for denoting it by $\cT$.
The closure of the Tits cone $\ov{\cT}$ is a properly convex, closed, and $W$-invariant cone and furthermore \emph{maximal} with these properties by \cite[Prop.~4.1]{DancigerGueritaud_Convex-cocompactness-for-Coxeter-groups2023}.

Conversely, let $\ov{\cC\cH}$ denote the dual of $\cT$.
This is necessarily a closed cone, and is furthermore minimal among closed $W$-invariant properly convex cones (since $\ov{\cT}$ is maximal) and hence contained in $\ov{\cT}$.

\subsubsection{Algebro-Geometric dictionary}
	\label{sssec:algebro_geometric_dictionary}
Let us recall (see \autoref{ssec:wehler}) that if $X$ is a Wehler $N$-fold, then the reflections $\sigma_i$ are given by the action in $N^1(X)$ of pseudo-automorphisms (denoted by the same symbols) of $X$; we let $\PsAut(X)$ denote the group of pseudo-automorphisms of $X$.

We also have the following interpretations of our cones in terms of the geometry of $X$, assuming $N\geq 3$ (see \cite[\S 3.4]{oguisocantat} for the case $N=2$):
\begin{itemize}
	\item The open cone $\cA$ is the \emph{ample cone} of $X$ while its closure $\ov{\cA}$ is the \emph{nef cone} of $X$, by \cite[Thm.~1.3, Thm.~3.1(2)]{oguisocantat}.
	\item The cone $\cT$ is the movable effective cone of $X$, by \cite[Thm.~1.3(3), Thm.~4.1(3)]{oguisocantat}.
\end{itemize}

\begin{proposition}[The pseudoeffective and big cones]
	\label{prop:characterizing_the_pseudoeffective_cone}
	\leavevmode
	\begin{enumerate}
		\item The closure $\ov{\cT}$ coincides with the pseudoeffective cone of $X$.
		\item The interior of $\cT$ coincides with the big cone of $X$.
	\end{enumerate}
\end{proposition}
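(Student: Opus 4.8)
The plan is to leverage the dictionary in \autoref{sssec:algebro_geometric_dictionary} that already identifies $\cT$ with the movable effective cone $\MovEff(X)$ and $\ov{\cA}$ with the nef cone, and then to upgrade this to a statement about the full pseudoeffective and big cones. The overall strategy has two halves: first show $\ov{\cT}\subseteq\ov{\mathrm{Eff}}(X)$, which is essentially formal, and then show the reverse containment $\ov{\mathrm{Eff}}(X)\subseteq\ov{\cT}$, which is where the geometry of Wehler manifolds enters.

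\textbf{The easy containment.} Since $\cT = W\cdot\ov{\cA}$ and $\ov{\cA}$ is the nef cone, every class in $\cT$ is the pullback under a pseudo-automorphism of a nef (hence pseudoeffective) class; pseudo-automorphisms are isomorphisms in codimension one, so they preserve the pseudoeffective cone (a class is pseudoeffective iff it is a limit of effective $\bR$-divisors, and effectivity is a codimension-one notion, preserved under pushforward/pullback by maps that are isomorphisms in codimension one). Hence $\cT\subseteq\ov{\mathrm{Eff}}(X)$, and since the latter is closed, $\ov{\cT}\subseteq\ov{\mathrm{Eff}}(X)$. (Alternatively, one may simply quote that the movable effective cone is contained in the pseudoeffective cone by definition.)

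\textbf{The hard containment.} For $\ov{\mathrm{Eff}}(X)\subseteq\ov{\cT}$ I would argue by duality. The pseudoeffective cone is dual to the movable curve cone $\ov{\mathrm{Mov}}_1(X)$ by the theorem of Boucksom--Demailly--Paun--Peternell \cite{BDPP}. So it suffices to exhibit enough movable curve classes to cut out $\ov{\cT}$ from outside — equivalently, to show that the dual cone $\ov{\cT}^{*} = \ov{\cC\cH}$ (the minimal $W$-invariant properly convex closed cone, introduced in \autoref{sssec:the_tits_and_minimal_cones}) is contained in $\ov{\mathrm{Mov}}_1(X)$, after identifying $N^1(X)$ with $N_1(X)$ via the $W$-invariant bilinear form. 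A clean way: the class $u = \omega_0+\cdots+\omega_N$ is (a multiple of) the restriction of a movable curve class from $(\P^1)^{N+1}$ — namely the class of a generic intersection of $N-1$ of the $(2,2,\dots,2)$ hypersurfaces cut with the original $X$, or more simply a suitable complete-intersection curve — and its $W$-orbit, together with convexity, generates $\ov{\cC\cH}$ because $\ov{\cC\cH}$ is the minimal such cone. Since $W$ acts by pseudo-automorphisms, the whole $W$-orbit of $u$ consists of movable curve classes, and a positive combination of movable curve classes is movable; taking closure gives $\ov{\cC\cH}\subseteq\ov{\mathrm{Mov}}_1(X)$. Dualizing reverses the inclusion: $\ov{\mathrm{Eff}}(X) = \ov{\mathrm{Mov}}_1(X)^{*}\subseteq\ov{\cC\cH}^{*} = \ov{\cT}^{**} = \ov{\cT}$. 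Combining the two containments proves part (1).

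\textbf{Part (2), the big cone.} The big cone is the interior of $\ov{\mathrm{Eff}}(X)$ (this is standard: $\mathrm{Big}(X) = \mathrm{int}\,\ov{\mathrm{Eff}}(X)$ since the volume function is continuous, positive exactly on $\mathrm{Big}(X)$, and $\ov{\mathrm{Big}}(X) = \ov{\mathrm{Eff}}(X)$). By part (1), $\ov{\mathrm{Eff}}(X) = \ov{\cT}$, so $\mathrm{Big}(X) = \mathrm{int}\,\ov{\cT}$. It remains to check $\mathrm{int}\,\ov{\cT} = \mathrm{int}\,\cT$, which is immediate since $\cT$ and $\ov{\cT}$ have the same closure and hence the same interior. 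The main obstacle in the whole argument is the hard containment, specifically pinning down that the $W$-orbit of $u$ (or some explicit movable curve class) genuinely generates the minimal cone $\ov{\cC\cH}$; this should follow from the minimality property cited from \cite{DancigerGueritaud_Convex-cocompactness-for-Coxeter-groups2023} once one checks $u\in\ov{\cC\cH}$ (e.g.\ by verifying $\ip{u,t}\geq 0$ for all $t\in\ov{\cA}$, which holds since $\ip{u,\omega_i}=2>0$, and then using $W$-invariance and convexity), but care is needed to ensure no facet of $\ov{\mathrm{Eff}}(X)$ is missed — i.e.\ that we are not merely getting a containment $\ov{\mathrm{Eff}}(X)\subseteq$ something strictly larger than $\ov{\cT}$. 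Invoking the precise Cantat--Oguiso description of $\MovEff(X)=\cT$ together with the BDPP duality is what closes this gap.
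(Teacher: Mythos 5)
Your first half matches the paper's: both cones are $\PsAut(X)$-invariant, the pseudoeffective cone is closed and contains $\ov{\cA}$, the big cone is open and contains $\cA$, hence they contain $\ov{\cT}$ and $\mathrm{int}\,\cT$ respectively. For the hard containment, however, the paper's proof is a one-liner that you had available but did not use: $\ov{\cT}$ is the \emph{maximal} properly convex closed $W$-invariant cone (\autoref{sssec:the_tits_and_minimal_cones}, citing Danciger--Gu\'eritaud), and $\ov{\mathrm{Eff}}(X)$ is itself a properly convex closed $W$-invariant cone, so it is contained in $\ov{\cT}$ and the containments are equalities. Your BDPP route uses the dual input (minimality of $\ov{\cC\cH}$) and is a legitimate alternative in outline, but as written it has genuine gaps.

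Concretely: (a) The duality $\ov{\cT}^{*}=\ov{\cC\cH}$ is taken with respect to the ad hoc Cantat--Oguiso form, while BDPP duality is with respect to the intersection pairing $N^1(X)\times N_1(X)\to\bR$; to pass between them you must realize $\ip{u,\cdot}$ as intersection against an actual movable curve class. Your candidate fails: a single complete intersection curve $\omega_{i_1}\cdots\omega_{i_{N-1}}|_X$ pairs to $0$ with $\omega_j$ for each $j\in\{i_1,\dots,i_{N-1}\}$ (since $\omega_j^2=0$), whereas $\ip{u,\omega_j}=2$ for every $j$; one needs the symmetric sum over all $(N-1)$-element subsets. (b) Your verification that $u\in\ov{\cC\cH}$ is invalid: positivity of $\ip{u,\cdot}$ on $\ov{\cA}$ plus ``$W$-invariance'' does not give positivity on $\ov{\cT}=\ov{W\cdot\ov{\cA}}$, because $u$ is not $W$-fixed (indeed $\sigma_i(u)=u+2\alpha_i$). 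The correct check is $u=\tfrac{1}{N-1}\sum_i\alpha_i\in\ov{\cC}\cap\ov{\cA}\cap\cH=\cF\subseteq\cC\cH$. (c) You need pseudo-automorphisms to carry movable curve classes to movable curve classes compatibly with the contragredient action; this is true but requires the projection formula for maps that are only isomorphisms in codimension one, which holds for movable classes and must be said. (d) Your closing appeal to the Cantat--Oguiso identification of the movable effective cone with $\cT$ to ``close the gap'' is circular: the closure of the movable effective cone can in general be strictly smaller than $\ov{\mathrm{Eff}}(X)$, and upgrading the former to the latter is exactly the content of the proposition. Each of these is repairable, but the maximality argument bypasses all of them.
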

\begin{proof}
	Both the pseudoeffective and the big cone are $\PsAut(X)$-invariant, and the first is closed while the second is open.
	Furthermore the first contains $\ov{\cA}$ while the second one contains $\cA$.
	It follows that the pseudoeffective cone contains $\ov{\cT}$ while the big cone contains the interior of $\cT$.

	Since $\ov{\cT}$ is the maximal properly convex closed $W$-invariant cone (by \autoref{sssec:the_tits_and_minimal_cones}) it follows that the preceding containments are in fact equalities.
\end{proof}

Let us end by remarking that the classes $\alpha_i$ from \autoref{sssec:duals_to_hyperplane_sections} do not have any (apparent to us) meaning in terms of the geometry of $X$.
Specifically, since $\{\alpha_i\}$ is the basis dual to $\{\omega_i\}$ for the ad hoc quadratic form, it would be interesting to understand if there is some natural way to associate a curve class to each $\omega_i$ so as to define the quadratic form geometrically.




\subsection{Some further cones and some dynamics}
	\label{ssec:some_further_cones_and_some_dynamics}

We keep the notation from the preceding paragraph.

\subsubsection{Projectivization}
	\label{sssec:projectivization}
For a subset $Q\subset N^1(X)$ we will denote by $[Q]\subset \bP N^1(X)$ its image inside the real projectivized \Neron--Severi group.
When dealing with cones, we will work exclusively with properly convex ones, so that the cone can be recovered from its projectivization uniquely, up to a choice of connected component which will always be the one intersecting the big cone.

Additionally, given a set $R\subset \bP N^1(X)$, we will denote by $\wtilde{R}$ its preimage intersected with the closure of the Tits cone.
In particular, in all the situations we consider we'll have for $Q\subset N^1(X)$ that $Q=\wtilde{[Q]}$.
When speaking of the \emph{convex hull} of a set $X\subset \bP N^1(X)$, we will always understand this as the convex hull of $\wtilde{X}$, projectivized.

\subsubsection{Hyperbolic space}
	\label{sssec:hyperbolic_space}
Let $\cH$ denote the connected component of vectors satisfying $\ip{v,v}>0$ that intersects $\cA$.
Then $\cH$ is an open cone, with closure denoted $\ov{\cH}$.
Denote by $\cH^{1}\subset \cH$ the hyperboloid of unit-norm vectors, it provides a model for hyperbolic $N$-space.
The projectivization $[\cH]$ is naturally in bijection with $\cH^1$.

\subsubsection{Limit set}
	\label{sssec:limit_set}
Denote by $\Lambda\subset \partial[\cH]=\left[\ov{\cH}\right]\setminus [\cH]$ the limit set of the group $W$.
By definition, $\Lambda$ is the set of accumulation points of an orbit $W\cdot [v]$ for some $[v]\in [\cH]$; it is independent of the choice of such $[v]$.
Furthermore, it is the smallest closed $W$-invariant set in $\partial [\cH]$, see \cite[\S12.2]{Ratcliffe2019_Foundations-of-hyperbolic-manifolds} for all these assertions.

\subsubsection{Convex hull of limit set}
	\label{sssec:convex_hull_of_limit_set}
Recall from \autoref{sssec:the_tits_and_minimal_cones} that $\ov{\cC\cH}$ denoted the dual of $\ov{\cT}$ and was the smallest closed properly convex $W$-invariant cone.
Furthermore, by \cite[Prop.~2.6]{DancigerGueritaud_Convex-cocompactness-for-Coxeter-groups2023} or \cite[Prop.~3.1]{Benoist2000_Automorphismes-des-cones-convexes} we have that $\ov{\cC\cH}$ is the convex hull of the (deprojectivized) limit set $\wtilde{\Lambda}$.
We will denote by $\cC\cH$ the interior of $\ov{\cC\cH}$.
Following \cite[\S12.4, p.~634]{Ratcliffe2019_Foundations-of-hyperbolic-manifolds} but with some license, we will refer to $\ov{\cC\cH}\cap \cH^1$ as the ``convex core'' of the group $W$.

Note also that we have the sequence of $W$-invariant cones $\ov{\cC\cH}\subseteq \ov{\cH}\subseteq \ov{\cT}$ which is exchanged by duality, in particular $\ov{\cH}$ is its own dual.

\subsubsection{Fundamental domains}
	\label{sssec:fundamental_domains}
We already know from \autoref{sssec:the_tits_and_minimal_cones} that $\ov{\cA}$ is a fundamental domain for $W$ acting on $\cT$.
To obtain the fundamental domain on $\cH$, we simply intersect $\ov{\cA}$ with $\cH$.

Set now $\ov{\cF}:=\ov{\cC}\cap \ov{\cA}$, where recall that $\ov{\cC}$ was the cone dual to $\ov{\cA}$.
Then by \cite[Thm.~5.2]{DancigerGueritaud_Convex-cocompactness-for-Coxeter-groups2023} we have that $\cF:=\ov{\cF}\cap \cH$ is the fundamental domain for the $W$-action on $\cC\cH$.
Note also that both of $\ov{\cC}$ and $\ov{\cA}$ are simplicial cones (i.e. the vertices give a basis of the ambient vector space).

For ease of notation, from now on we will denote
\begin{equation*}
[N]:=\{0,\dots,N\}.
\end{equation*}

\begin{proposition}[Convex core fundamental domain properties]
	\label{prop:convex_core_fundamental_domain_properties}
	Suppose that $N\geq 3$.
	Define the vectors
	\[
		\omega_{\widehat{ij}}:=
		\left(\omega_0 + \dots + \omega_N\right) - \left(\omega_i + \omega_j\right)
	\]
	for all two-element subsets $\{i,j\}\subset [N]$, for a total of $\binom{N+1}{2}$.
	\begin{enumerate}
		\item The vectors $\omega_{\widehat{ij}}$ are isotropic and span all the extremal rays of $\ov{\cF}$.
		\item The cone $\ov{\cF}$ has $2(N+1)$ faces of codimension $1$:
		\begin{align*}
			\ov{\cF}_{k} & := \bR_{\geq 0}\text{-span  of }
			\{\omega_{\widehat{ij}} \colon k\notin \{i,j\}\}
			\quad k = 0, \dots, N\\
			\ov{\cF}_{\widehat{k}} & := \bR_{\geq 0}\text{-span  of }
			\{\omega_{\widehat{ij}} \colon k\in \{i,j\}\}
			\quad k = 0, \dots, N
		\end{align*}
		\item The face $\ov{\cF}_k$ is contained in $\omega_{k}^{\perp}$ and $\ov{\cF}_{\widehat{k}}$ is contained in $\alpha_k^\perp$.

\noindent
		In particular, the reflection $\sigma_{k}$ fixes pointwise $\ov{\cF}_{\widehat{k}}$, while $\sigma_{i}$ for $i\neq k$ keeps $\ov{\cF}_{k}$ in the hyperplane $\omega_k^\perp$.
	\end{enumerate}	
\end{proposition}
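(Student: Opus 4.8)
The plan is to make everything explicit in the basis $\{\omega_i\}$ of hyperplane sections, recognize the cross-section of $\ov{\cF}=\ov{\cA}\cap\ov{\cC}$ as a hypersimplex, and read off its vertices and facets. As a first step I would record the two descriptions $\omega_{\widehat{ij}}=\sum_{k\notin\{i,j\}}\omega_k=\tfrac12(\alpha_i+\alpha_j)$, the second holding because $\alpha_i+\alpha_j=2u-2\omega_i-2\omega_j=2\omega_{\widehat{ij}}$. The first shows $\omega_{\widehat{ij}}\in\ov{\cA}$ and the second shows $\omega_{\widehat{ij}}\in\ov{\cC}$, so $\omega_{\widehat{ij}}\in\ov{\cF}$; isotropy then follows from
\[
\ip{\omega_{\widehat{ij}},\omega_{\widehat{ij}}}=\tfrac14\big(\ip{\alpha_i,\alpha_i}+2\ip{\alpha_i,\alpha_j}+\ip{\alpha_j,\alpha_j}\big)=\tfrac14\big(-2(N-1)+4(N-1)-2(N-1)\big)=0,
\]
using the identities of \autoref{sssec:duals_to_hyperplane_sections}.

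Next I would put $\ov{\cF}$ in coordinates. Writing $v=\sum_k x_k\omega_k=\sum_k y_k\alpha_k$ and substituting $\alpha_k=u-2\omega_k$ gives $x_k=Y-2y_k$ with $Y=\sum_k y_k$; summing over $k$ yields $Y=\tfrac1{N-1}\sum_l x_l$, so $y_k\geq 0\iff(N-1)x_k\leq\sum_l x_l$. Hence
\[
\ov{\cF}=\Big\{\,v=\textstyle\sum_k x_k\omega_k\ :\ x_k\geq 0\ \text{and}\ (N-1)x_k\leq\textstyle\sum_l x_l\ \text{ for all }k\,\Big\}.
\]
Since $\ip{v,u}=2\sum_l x_l>0$ on $\ov{\cF}\setminus\{0\}$, the cone $\ov{\cF}$ is the cone over its cross-section $P:=\{v\in\ov{\cF}:\ip{v,u}=2(N-1)\}$, which in the $x$-coordinates is exactly the hypersimplex $\{x\in[0,1]^{N+1}:\sum_k x_k=N-1\}$. (For $N=2$ the second family of inequalities is vacuous, which accounts for the degeneracy noted in \autoref{sssec:setup_bases_and_normalizations_for_the_quadratic_form}.)

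Then I would describe the combinatorics of $P$. Its vertices are exactly the $\{0,1\}$-vectors with $N-1$ ones, that is, the $\omega_{\widehat{ij}}$, which have zeros in positions $i,j$: a point of $P$ with a fractional coordinate has at least two of them, since the coordinate sum $N-1$ is an integer, and then lies in the relative interior of a segment of $P$ in the direction $e_i-e_j$, so is not a vertex; conversely $\omega_{\widehat{ij}}$ is the only point of $P$ with $x_i=x_j=0$ (the remaining $N-1$ coordinates, lying in $[0,1]$ and summing to $N-1$, are forced to equal $1$), hence is a vertex. Its facets are exactly $P\cap\{x_k=0\}$ and $P\cap\{x_k=1\}$ for $k\in[N]$, the $2(N+1)$ defining inequalities of $P$ being irredundant when $N\geq 3$; these $2(N+1)$ facets are $(N-1)$-dimensional and pairwise distinct (as one sees on their vertices).

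Finally I would transport this to $\ov{\cF}$ by coning off. The extremal rays of $\ov{\cF}$ are the rays $\bR_{\geq 0}\,\omega_{\widehat{ij}}$, which is (1). Using $\ip{v,\alpha_k}=2(N-1)x_k$ and $\ip{v,\omega_k}=\sum_l x_l-(N-1)x_k$, the facet $P\cap\{x_k=0\}$ cones off to $\ov{\cF}\cap\alpha_k^{\perp}$, whose extremal rays are the $\omega_{\widehat{ij}}$ with $k\in\{i,j\}$ (this is $\ov{\cF}_{\widehat{k}}$), and $P\cap\{x_k=1\}$ cones off to $\ov{\cF}\cap\omega_k^{\perp}$, whose extremal rays are the $\omega_{\widehat{ij}}$ with $k\notin\{i,j\}$ (this is $\ov{\cF}_{k}$); this yields (2) and the containments in (3). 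For the ``in particular'' clause: $\sigma_k$ is the reflection in $\alpha_k$, hence fixes $\alpha_k^{\perp}\supseteq\ov{\cF}_{\widehat{k}}$ pointwise, while for $i\neq k$ formula \eqref{eqn:formula_for_tau_reflections} gives $\sigma_i(\omega_k)=\omega_k$, so $\sigma_i$ preserves the hyperplane $\omega_k^{\perp}\supseteq\ov{\cF}_{k}$. The one point I expect to require real care is this bijective passage between vertices/facets of the hypersimplex $P$ and extremal rays/codimension-$1$ faces of $\ov{\cF}$ — ensuring no spurious rays or facets appear and that the $2(N+1)$ facets are genuinely distinct for $N\geq 3$; if one prefers not to quote hypersimplex combinatorics, this step becomes a direct but longer enumeration of the faces of the polyhedral cone $\ov{\cF}$ from its $2(N+1)$ defining inequalities, which is where essentially all of the (elementary) bookkeeping lives.
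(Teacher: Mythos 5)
Your proof is correct, and it takes a recognizably different route from the paper's. The paper works directly with the cone $\ov{\cF}$ and its $2(N+1)$ defining half-spaces: it checks the incidences $\ip{\omegaij,\omega_k}=0$ and $\ip{\omegaij,\alpha_k}=0$ by direct computation, deduces extremality of each ray $\bR_{\geq 0}\,\omegaij$ from the count of hyperplanes it lies on, and rules out any \emph{other} extremal ray by a short induction on $N$ (a ray in some $\omega_k^\perp$ reduces to one dimension lower; a ray in no $\omega_k^\perp$ would have to lie on $N$ of the $\alpha_i^\perp$ and hence be some $\omega_j$, a contradiction). You instead pass to the compact cross-section $\set{v\in\ov{\cF}:\ip{v,u}=2(N-1)}$, identify it in the $x$-coordinates as the hypersimplex $\set{x\in[0,1]^{N+1}:\sum_k x_k=N-1}$, and read off vertices and facets from its standard combinatorics (your integer-sum/two-fractional-coordinates argument for the vertices is the correct and complete replacement for the paper's inductive step, and your translation $\ip{v,\alpha_k}=2(N-1)x_k$, $\ip{v,\omega_k}=\sum_l x_l-(N-1)x_k$ matches the paper's hyperplanes exactly). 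What your approach buys is a self-contained and arguably cleaner classification of \emph{all} extremal rays and facets in one stroke, at the cost of having to justify the cone--polytope face correspondence and the irredundancy of the $2(N+1)$ inequalities for $N\geq 3$ (which you do). What the paper's approach buys is brevity and the avoidance of any cross-section, at the cost of a terser uniqueness argument. Both proofs handle the $N=2$ degeneracy consistently, and your derivation of the ``in particular'' clause from $\sigma_k$ being the reflection in $\alpha_k$ and from $\sigma_i(\omega_k)=\omega_k$ is the same as the paper's.
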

\noindent We will make use of the above statements and notation when $N=2$ as well, the only caveat is that $\ov{\cF}_k$ is not a face but an extremal ray of the fundamental domain.

We will also refer to $\ov{\cF_k}$ as a \emph{divergent} face and $\ov{\cF}_{\widehat{k}}$ as \emph{recurrent} face.
This is not standard terminology, but will be useful to keep track of other structures.

\begin{proof}
	It is a direct calculation that $\omega_{\widehat{ij}}$	is isotropic.
	It also follows from the definition of $\ov{\cF}$ as the intersection of $\ov{\cC}$ and $\ov{\cA}$ that it has at most $2(N+1)$	faces of codimension $1$, as it is given by the $2(N+1)$ equations
	\[
		\{v\colon \ip{\alpha_i,v}\geq 0, \ip{\omega_i,v}\geq 0\quad i=0,\dots, N\}.
	\]
	It is also immediate to compute from the definitions that the vectors $\omega_{\widehat{ij}}$ belong to the listed hyperplanes:
	\begin{align*}
		\ip{\omega_{\widehat{ij}},\omega_k} & = 0 \quad \text{if }k\notin\{i,j\}\\
		\ip{\omega_{\widehat{ij}}, \alpha_k} & = 0 \quad \text{if }k\in\{i,j\}
	\end{align*}
	The claim about the action of $\sigma_i$ on the faces, and their hyperplanes, is again immediate from the definition of the reflections.

	It remains to verify that the $\omega_{\widehat{ij}}$ indeed span extremal rays of $\ov{\cF}$, and that there are no others.
	First, observe that $\omega_{\widehat{ij}}$ belongs to the intersection of the $N+1$ hyperplanes $\omega_k^\perp$ for $k\in [N]\setminus \{i,j\}$ and $\alpha_{i}^\perp,\alpha_j^\perp$, and is in the strictly positive halfspace determined by the remaining $N+1$ equations.
	This implies that indeed the ray it spans is extremal.

	To verify that there are no other extremal rays, suppose there was one spanned by a vector $e\neq 0$.
	If $e\in \omega_k^{\perp}$ for some $k$ then we can proceed inductively (see also \autoref{ssec:inductive_structure_of_boundary} for related arguments) and conclude that it was one of the $\omega_{\widehat{ij}}$.
	If on the other hand it did not belong to any $\omega_k^\perp$, then it must be in the intersection of at least $N$ hyperplanes $\alpha_i^\perp$, which would imply it agrees with some $\omega_j$ which is a contradiction.
\end{proof}

\subsubsection{Parabolic vectors}
	\label{sssec:parabolic_vectors}
Any vector that is proportional to a vector in the $W$-orbit of any of the $\omega_{\widehat{ij}}$ will be called a \emph{parabolic vector}.
Note that a parabolic vector belongs to $W\cdot \ov{\cA}$, so by the action of a single element $w\in W$ it can be made to belong to $\ov{\cA}$.

On the other hand, it also belongs to the limit set $\wtilde{\Lambda}$, since $\omega_{\widehat{ij}}$ is fixed by the two reflections $\sigma_i,\sigma_j$ whose composition is a unipotent transformation with attracting ray spanned by $\omega_{\widehat{ij}}$, i.e. for any $v\notin \omega_{\widehat{ij}}^\perp$ we have that
\[
	(\sigma_i\sigma_j)^{n} [v]  \xrightarrow{n\to +\infty} [\omega_{\widehat{ij}}] \text{ in }\bP N^1(X).
\]

\subsubsection{Geometric finiteness and recurrence}
	\label{sssec:geometric_finiteness_and_recurrence}
We record a few basic properties of the group $W$ and its limit set $\Lambda$.
First, the action of $W$ on hyperbolic space $\cH^1$ has a finite-sided polyhedral fundamental domain, namely $\ov{\cA}\cap \cH^1$.
Therefore, $W$ is geometrically finite in the sense of \cite[\S12.4, pg.~635]{Ratcliffe2019_Foundations-of-hyperbolic-manifolds}.
In particular, every point in the limit set is either conical, or bounded parabolic by \cite[Thm.~12.4.5]{Ratcliffe2019_Foundations-of-hyperbolic-manifolds}.

A boundary limit point $p$ is \emph{conical} if for every point $a\in \cH^1$, there exists a compact set $N_{a,p}\subset \cH^1$ such that the hyperbolic geodesic ray connecting $a$ to $p$ enters infinitely often the $W$-orbit of $N_{a,p}$, see \cite[\S12.3, Definition]{Ratcliffe2019_Foundations-of-hyperbolic-manifolds}.
It is elementary to check that if we fix a compact set of the convex core $K\subset \cH^1\cap \ov{\cC\cH}$, then there exists a compact set $N_K$ also in the convex core, such that $N_K$ can be used in the definition of a conical limit point, taking any $a\in K$.

Finally, we recall that a \emph{bounded parabolic} limit point $p$ (see \cite[\S12.3, pg.~618]{Ratcliffe2019_Foundations-of-hyperbolic-manifolds}) is one for which the quotient $\left(\Lambda\setminus \{p\}\right)/\Stab_W(p)$ is compact.

Let us also note that by \cite[Thm.~12.4.4]{Ratcliffe2019_Foundations-of-hyperbolic-manifolds}, we have that the only rays in $\ov{\cA}\cap \wtilde{\Lambda}$ are precisely the parabolic ones, namely the rays spanned by the $\omega_{\widehat{ij}}$ from \autoref{prop:convex_core_fundamental_domain_properties}.

\subsubsection{The structure at the cusp}
	\label{sssec:the_structure_at_the_cusp}
For future use, we describe also the structure of $\cF$ near the ray spanned by $\omegaij$.
The faces to which it belongs are
\begin{itemize}
	\item The $(N-1)$ divergent faces $\cF_k$ with $k\in [N]\setminus \{i,j\}$.
	\item The $2$ recurrent faces $\cF_{\hat{i}},\cF_{\hat{j}}$.
\end{itemize}
Note, in particular, that $\cF$ is not simplicial at the ray of $\omegaij$, since $N+1$ faces meet there instead of $N$.
Note also that when $N=2$ the one divergent face is in fact a ray.

To determine which other rays spanned by $\omega_{\widehat{k\ell}}$ are adjacent to the ray of $\omega_{ij}$, we note that it should be on at least $N-1$ faces from the above list.
This gives the list of vectors
\[
	\omega_{\widehat{i\ell}} \text{ with } \ell\in [N]\setminus \{i,j\}
	\quad \text{ and }\quad
	\omega_{\widehat{kj}} \text{ with } k\in [N]\setminus \{i,j\}.
\]



\subsection{Inductive structure of boundary}
	\label{ssec:inductive_structure_of_boundary}

Let us fix a point $a\in \cF$, for definiteness we could take $a = \sum_i \omega_i$ for example.
Our goal is to now give a classification of points $p$ in $\partial\ov{\cT}$ according to how the segment $[a,p]$ crosses different $W$-translates of the fundamental domains $\ov{\cA}$.

\subsubsection{Setup}
	\label{sssec:setup_inductive_structure_of_boundary}
Fix a subset $S\subset [N]$.
Associated to it we can define
\begin{align*}
	\cA_{S}& :=\left\{ \sum_{s\in S} x_s \omega_s\colon x_s>0\quad \forall s\in S \right\}\\
	W_{S} & := \ip{\sigma_s\colon \forall s\in S} \subset W
\end{align*}
So $\cA_{S}$ is the interior of a facet of $\ov{\cA}$, while $W_S$ is the subgroup of $W$ generated by the reflections in $S$.
Additionally, we will denote by $\Lambda_S\subset \partial \left[\cH\right]$ the limit set of $W_S$ in the sense of \autoref{sssec:limit_set}, and its lift to the vector space will be denoted $\wtilde{\Lambda}_S$.

If we abbreviate the complement as $S^c:=[N]\setminus S$ then the stabilizer of $\cA_S$ is $W_{S^c}$.
We can now state the main result:

\begin{theorem}[Divergent--recurrent decomposition of limit points]
	\label{thm:divergent_recurrent_decomposition_of_limit_points}
	Suppose $N\geq 2$.
	For every $p\in \partial \ov{\cT}$ there exists a unique decomposition
	\[
		p = p_d + p_r \quad \text{with}\quad p_d,p_r \in \partial \ov{\cT}
	\]
	characterized by the following properties.
	\begin{enumerate}
		\item There exists a $w'\in W$ (possibly $\id$) and $q_{d}'\in \cA_{S_d'}, q_r'\in \wtilde{\Lambda}_{S_r'}$ where $S_r',S_d'\subseteq [N]$ are disjoint, such that:
		\[
			p_d': = w\cdot q_d' \quad p_r' : = w\cdot q_r'\text{ and }
			p = p_d' + p_r'.
		\]		
		\item Among all choices of $w'$ satisfying the condition of the first part, pick one such that the cardinality of $S_d'$ is maximized.
		Then the resulting vectors $p_d',p_r'$ are independent of the choice of this $w'$.
	\end{enumerate}
	Furthermore, if $S_r$ is nonempty then $|S_r|\geq 3$ and if $S_d$ is nonempty then $|S_d|\leq N-1$.
\end{theorem}
\noindent We will refer to $p_d$ and $p_r$ as the ``divergent'' and ``recurrent'' part of $p$ respectively.
With our choices, note that $W_{S_r}$ fixes $q_d$.

\begin{remark}[On uniqueness]
	\label{rmk:on_uniqueness}
	\leavevmode
	\begin{enumerate}
		\item Since the decomposition in \autoref{thm:divergent_recurrent_decomposition_of_limit_points} is unique, any other possible choice of $w\in W$ differs by a stabilizer of both $\cA_{S_d}$ and $\wtilde{\Lambda}_{S_r}$.
		Therefore, $w$ gives a unique coset in $W/W_{T}$ with $T:=[N]\setminus \left(S_d \coprod S_r\right)$.
		\item The maximality condition on $S_d$ is needed because of parabolic vectors, see \autoref{sssec:parabolic_vectors}.
		Indeed, $\omega_{\widehat{ij}}$ belongs both to the limit set and to $\cA_{[N]\setminus \{i,j\}}$.
		Its decomposition resulting from \autoref{thm:divergent_recurrent_decomposition_of_limit_points} is
		\[
			\omega_{\widehat{ij}} =
			\underbrace{\omega_{\widehat{ij}}}_{p_d} +
			\underbrace{\phantom{0}0\phantom{_{\widehat{j}} } }_{p_r}
		\]
		with vanishing recurrent part.
	\end{enumerate}
\end{remark}

\begin{remark}[On extremal rays]
	\label{rmk:on_extremal_rays}
	\autoref{thm:divergent_recurrent_decomposition_of_limit_points} provides in particular a description of the extremal rays of $\ov{\cT}$.
	Namely, we find that the extremal rays of $\ov{\cT}$ are spanned by the $\omega_i$ and their $W$-orbits, as well as the (lift of) the limit set of $W$.
	However, the decomposition provided by the Theorem gives further information about the structure of the boundary.
\end{remark}

\subsubsection{Base case: $N=2$}
	\label{sssec:base_case_n_2}
Since we will prove the statement by induction, we treat the base case first.
Note that we can apply the same reasoning as in \autoref{sssec:proof_of_thm:divergent_recurrent_decomposition_of_limit_points} below, with the observation that only the first two of the three cases can occur here.

When $N=2$ the reflection group $W$ is a lattice so we have that $\partial\ov{\cT}=\partial\ov{\cH}=\wtilde{\Lambda}$.
There are two possibilities for a point $p\in \wtilde{\Lambda}$: either it is parabolic, or not.
If $p$ is parabolic then we set $p=p_d$, in accordance with \autoref{rmk:on_uniqueness} above (then $S_d\in \{\{0,1\},\{0,2\},\{1,2\}\}$ and $S_r=\emptyset$).
If $p$ is not parabolic then we set $p=p_r$ and $S_d=\emptyset$ and $S_r=\{0,1,2\}$.

\subsubsection{Preliminaries for induction}
	\label{sssec:preliminaries_for_induction}
To describe the inductive step, we introduce some temporary notation.
Assume $N\geq 3$.
We will use the subscript $N$ for cones that occur in the \Neron--Severi group of a hypersurface of dimension $N$, and respectively subscript $N-1$ for those of one dimension lower.
In addition we will add a $'$ superscript to objects corresponding to hypersurfaces of one dimension lower to distinguish them from analogous ones in one dimension higher.
For example, $W_{[N-1]}$ is a subgroup of $W_{[N]}$ while $W_{[N-1]}'$ denotes the corresponding reflection group in one dimension lower.

We will denote the corresponding vector spaces by $V_N$ and $V_{N-1}'$ respectively and work with the conformal embedding
\begin{align*}
	\phi\colon V_{N-1}'&\into V_N\\
	\omega_{i}' &\mapsto \omega_{i} + \lambda \omega_N && i=0,\dots, N-1
\end{align*}
where $\lambda$ is determined from the condition $\phi(\omega_i')\in \omega_{N}^\perp$, i.e. $\lambda=\tfrac{1}{N-2}$.
The embedding is almost isometric -- it scales the quadratic form by $(N-1)/(N-2)$.
It is also equivariant for the action of $W'_{[N-1]}$ on $V'_{N-1}$ and the morphism $\phi_W\colon W'_{[N-1]}\toisom W_{[N-1]} \subset W_{[N]}$.

Denote by $\cT_N,\cT_{N-1}'$ the corresponding Tits cones.
With this preliminary notation, we can now state:

\begin{proposition}[Codimension 1 decomposition]
	\label{prop:codimension_1_decomposition}
	Let $\omega_N^{\leq 0}$ denote the halfspace of vectors in $V_N$ with nonpositive inner product with $\omega_N$.
	\leavevmode
	\begin{enumerate}
		\item We have that
		\[
			\ov{\cT}_{N}\cap \omega_N^\perp = \phi\left(\ov{\cT}'_{N-1}\right).
		\]
		\item Furthermore we have that
		\[
			\ov{\cT}_N\cap \omega_N^{\leq 0}
			=
			\text{Cvx.Hull}\left(
			\bR_{\geq 0}\omega_N,
			\ov{\cT}_N\cap \omega_N^{\perp}\right).
		\]
	\end{enumerate}
\end{proposition}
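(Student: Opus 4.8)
The plan is to prove the two statements in turn, with part (2) being the key geometric input and part (1) following by a dimension-counting/reflection-group argument. For part (1), the content is that the hyperplane section $\ov{\cT}_N\cap\omega_N^\perp$ of the Tits cone is again a Tits cone, for the Coxeter group obtained by deleting the reflection $\sigma_N$. First I would observe that $\omega_N^\perp$ is $\sigma_i$-invariant for every $i\neq N$ (since $\sigma_i$ fixes $\omega_N$ when $i\neq N$, by \eqref{eqn:formula_for_tau_reflections}), and that $\phi$ identifies $V'_{N-1}$ with $\omega_N^\perp$ equivariantly for $\phi_W\colon W'_{[N-1]}\toisom W_{[N-1]}$, rescaling the quadratic form. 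Hence $\phi(\ov{\cA}'_{[N-1]})=\ov{\cA}_{[N]}\cap\omega_N^\perp$ on the level of the basic simplicial cone: a class $\sum_{i<N}x_i\phi(\omega_i')$ lies in $\ov{\cA}_N$ exactly when all $x_i\geq0$, since the $\phi(\omega_i')$ are among the generators of $\ov{\cA}_N$ lying in $\omega_N^\perp$. Applying $W_{[N-1]}$ to both sides and using that $\ov{\cT}'_{N-1}=W'_{[N-1]}\cdot\ov{\cA}'_{[N-1]}$ while $\ov{\cT}_N\cap\omega_N^\perp$ is a union of $W_{[N-1]}$-translates of faces of $\ov{\cA}_N$, I would get $\phi(\ov{\cT}'_{N-1})\subseteq\ov{\cT}_N\cap\omega_N^\perp$. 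For the reverse inclusion one uses that $\ov{\cT}_N$ is the maximal properly convex closed $W$-invariant cone (\autoref{sssec:the_tits_and_minimal_cones}): the intersection with $\omega_N^\perp$ is a properly convex closed $W_{[N-1]}$-invariant cone, which transports under $\phi^{-1}$ to such a cone in $V'_{N-1}$, hence lies in $\ov{\cT}'_{N-1}$ by maximality there. The one point to check carefully is that $\omega_N^\perp\cap\ov{\cT}_N$ contains $\phi(\ov{\cA}'_{N-1})$ with $\phi(\cA'_{N-1})$ meeting the relative big cone, so that maximality applies in dimension $N-1$; this is where the conformal embedding $\phi$ and the explicit value $\lambda=\tfrac1{N-2}$ are used.

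For part (2), write $\mathcal{K}:=\text{Cvx.Hull}(\bR_{\geq0}\omega_N,\,\ov{\cT}_N\cap\omega_N^\perp)$. The inclusion $\mathcal{K}\subseteq\ov{\cT}_N\cap\omega_N^{\leq0}$ is the easy direction: $\ov{\cT}_N$ is convex, it contains $\omega_N$ (which lies in $\omega_N^{\leq0}$ since $\ip{\omega_N,\omega_N}=-(N-2)<0$ for $N\geq3$) and it contains $\ov{\cT}_N\cap\omega_N^\perp$, so it contains their convex hull; and $\omega_N^{\leq0}$ is itself a convex cone containing both pieces, hence contains $\mathcal{K}$. The substance is the reverse inclusion: every $v\in\ov{\cT}_N$ with $\ip{\omega_N,v}\leq0$ lies on a segment from a nonnegative multiple of $\omega_N$ to a point of $\omega_N^\perp\cap\ov{\cT}_N$. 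Given such $v$, I would look at the ray $v+t\,\omega_N$, $t\geq0$; since $\ip{\omega_N,v}\leq0=\ip{\omega_N,\omega_N}\cdot0$ and... more to the point, $\sigma_N(v)=v+\tfrac{\ip{v,\alpha_N}}{N-1}\alpha_N=v+2\ip{\ldots}\omega_N$-type translation along $\omega_N$ up to sign — the cleanest route is: the function $t\mapsto\ip{\omega_N,v-t\omega_N}=\ip{\omega_N,v}+t(N-2)$ is strictly increasing in $t$, so there is a unique $t_0\leq0$, i.e. we instead move in the $+\omega_N$ direction: $t\mapsto\ip{\omega_N,v+t\omega_N}=\ip{\omega_N,v}-t(N-2)$ is strictly decreasing, so starting from a possibly negative value we cannot reach $0$ by adding $\omega_N$. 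Thus I would instead subtract: set $w:=v+t\omega_N$ and solve $\ip{\omega_N,w}=0$, giving $t=\ip{\omega_N,v}/(N-2)\leq0$, i.e. $w=v-|t|\omega_N$. Then $v=w+|t|\omega_N$ with $w\in\omega_N^\perp$ and $|t|\geq0$, so it remains to show $w\in\ov{\cT}_N$. This is exactly the assertion that subtracting a nonnegative multiple of $\omega_N$ from a pseudoeffective class with $\ip{\omega_N,\cdot}\leq0$ stays pseudoeffective — geometrically, that $\omega_N$ points "out of" $\ov{\cT}_N$ across the wall $\omega_N^\perp$ in the direction of increasing $\ip{\omega_N,\cdot}$.

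The main obstacle is therefore proving $w=v-|t|\omega_N\in\ov{\cT}_N$. I would handle this using the reflection $\sigma_N$ and the structure of the fundamental domain. Decompose $v\in\ov{\cT}_N=W\cdot\ov{\cA}_N$; writing $v=u\cdot a$ with $a\in\ov{\cA}_N$, one is reduced — after tracking how $\ip{\omega_N,\cdot}$ transforms — to the case where the "last crossing" of the geodesic/segment is across the wall $\omega_N^\perp$, i.e. to $v\in\ov{\cA}_N\cup\sigma_N\ov{\cA}_N$, where $\sigma_N(v)-v$ is a nonnegative multiple of $\alpha_N=u-2\omega_N$. Here a direct computation in the explicit cones suffices: for $v=\sum x_i\omega_i\in\ov{\cA}_N$ with $\ip{\omega_N,v}=\sum_i x_i - (N-1)x_N\leq0$, subtracting $|t|\omega_N$ decreases $x_N$, and I must check the result is still in $\ov{\cT}_N$ — it will in general leave $\ov{\cA}_N$ and land in $\sigma_N\ov{\cA}_N$, and one verifies $\sigma_N(w)$ has nonnegative $\omega$-coordinates using $\ip{\omega_N,v}\leq0$. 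Alternatively, and perhaps more cleanly, I would invoke \autoref{prop:characterizing_the_pseudoeffective_cone} to work with the pseudoeffective cone directly and argue that $\omega_N^\perp$ supports $\ov{\cT}_N$ from the side $\omega_N^{\geq0}$ locally near generic wall points, then globalize by the maximality/convexity of $\ov{\cT}_N$; the precise bookkeeping of which $W$-translate of $\ov{\cA}_N$ contains $w$ is the part that needs care, and is presumably where an inductive appeal to \autoref{prop:codimension_1_decomposition} in lower dimension (as hinted by the "see also \autoref{ssec:inductive_structure_of_boundary}" remarks) enters.
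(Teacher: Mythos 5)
Your part (1) is essentially the paper's argument: $\phi$ carries $\ov{\cA}'$ onto $\ov{\cA}\cap\omega_N^\perp$ equivariantly for $W'_{[N-1]}\toisom W_{[N-1]}$, giving one inclusion, and the reverse inclusion follows because $\ov{\cT}_N\cap\omega_N^\perp$ is a closed properly convex $W_{[N-1]}$-invariant cone, hence contained in $\phi(\ov{\cT}'_{N-1})$ by maximality in $V'_{N-1}$. The easy inclusion in part (2) and the identification of the projection $v\mapsto w=v-|t|\omega_N$ (with $|t|=-\ip{\omega_N,v}/(N-2)\geq 0$) also match what the paper does.

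The hard direction of part (2) is where your proposal has a genuine gap. You correctly reduce to showing $w\in\ov{\cT}_N$, and your coordinate check works on $\ov{\cA}_N$ itself: for $v=\sum x_i\omega_i$ with $\ip{\omega_N,v}\leq 0$ one finds $w=\sum_{i\neq N}x_i\,\phi(\omega_i')\in\phi(\ov{\cA}'_{N-1})$. But two problems remain. First, $\ov{\cT}_N$ is the \emph{closure} of $W\cdot\ov{\cA}_N$, so a general $v\in\ov{\cT}_N\cap\omega_N^{\leq 0}$ --- in particular any point of the limit set, which is exactly the locus this proposition is later applied to --- admits no expression $u\cdot a$ with $a\in\ov{\cA}_N$, and your starting decomposition does not exist for it. Second, even for $v\in\cT_N$ the asserted reduction to $v\in\ov{\cA}_N\cup\sigma_N\ov{\cA}_N$ is unjustified: the projection along $\omega_N$ commutes only with $W_{[N-1]}$, so for $v=u\cdot a$ with $u$ a long word the "bookkeeping of which $W$-translate contains $w$" that you defer is precisely the content of the statement, not a detail. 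The paper avoids both issues by arguing globally and by contradiction: it projects the entire cone $\ov{\cT}_N\cap\omega_N^{\leq 0}$ onto $\omega_N^\perp$ at once, observes the image is a closed $W_{[N-1]}$-invariant convex cone and invokes maximality of $\ov{\cT}'_{N-1}$; the residual bad case forces $\omega_N$ into the interior of $\ov{\cT}_N$, which is absurd because $W_{[N]}$ acts properly discontinuously on that interior (Vinberg) while $\omega_N$ has the infinite stabilizer $W_{[N-1]}$. This proper-discontinuity/infinite-stabilizer contradiction is the key input missing from your sketch, and without it (or an equivalent global argument) the step $w\in\ov{\cT}_N$ is not established.
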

\begin{proof}
	The embedding $\phi$ from \autoref{sssec:preliminaries_for_induction} is an isometry of $V_{N-1}'$ onto $\omega_N^{\perp}$ and takes the fundamental domain $\ov{\cA}'$ to the intersection $\ov{\cA}\cap \omega_N^\perp$.
	From $W_{[N-1]}$-equivariance of the embedding it therefore follows that $\phi\left(\ov{\cT}'_{N-1}\right)$ is contained inside $\ov{\cT}_N\cap \omega_N^\perp$.
	Furthermore both sets are closed properly convex and $W_{[N-1]}$-invariant, and by maximality of $\ov{\cT}'_{N-1}$ in $V_{N-1}'$ it follows that they must coincide.

	To establish the second claim, note again that both $\omega_N$ and $\omega_N^\perp\cap \ov{\cT}_N$ belong to $\ov{\cT}_N\cap \omega_N^{\leq 0}$, so we have an inclusion one way.
	Suppose there is some point $z$ outside the specified convex hull.
	Project it onto $\omega_N^\perp$ along $\omega_N$, using the orthogonal decomposition $V_N=\omega_N\oplus \omega_N^{\perp}$, to obtain another point $z'$.
	Since the same projection of $\ov{\cT}_N\cap \omega_N^{\leq 0}$ would be another closed $W_{[N-1]}$-invariant subset, we conclude that $z'\in \ov{\cT}_N\cap \omega_N^{\perp}$.
	So we have that $\omega_N$ belongs to the interior of the segment $[z,z']$, itself contained in $\ov{\cT}_N$.
	Acting again by $W_{[N-1]}$ on the segment $[z,z']$, using that $\omega_N\in [z,z']$ is fixed, and that the $W_{[N-1]}$-orbit of $z'$ yields a basis of $\omega_N^\perp$, we conclude that $\omega_N$ is in the interior of $\ov{\cT}_N$.
	This, however, is absurd since $W_{[N]}$ acts properly discontinuously on the interior of $\ov{\cT}_N$ (\cite[Thm.~2]{Vinberg1971_Discrete-linear-groups-that-are-generated-by-reflections.} or \cite[Fact~3.8]{DancigerGueritaud_Convex-cocompactness-for-Coxeter-groups2023}), but $\omega_N$ has the infinite stabilizer $W_{[N-1]}$.
\end{proof}

\begin{remark}
Let us note that since $\omega_N$ has the infinite stabilizer $W_{[N-1]}$, the assertion in \cite[Remark 2.12]{oguisocantat} cannot hold as stated.
One must require that the sequence of elements in $W_{[N]}$ diverge in the coset space $W_{[N]}/W_{[N-1]}$, and to justify the ``easy induction'' claimed in the subsequent \cite[Remark 2.13]{oguisocantat} we have to appeal to a more delicate study of Coxeter groups.
\end{remark}

\subsubsection{Proof of \autoref{thm:divergent_recurrent_decomposition_of_limit_points}}
	\label{sssec:proof_of_thm:divergent_recurrent_decomposition_of_limit_points}
Consider the union of all the $W$-translates of all the hyperplanes $\omega_i^{\perp}$, denoted $H_{\bullet}:=\cup_{i\in [N]}W\cdot \omega_i^{\perp}$.
On the interior of $\cT$, this is a locally finite collection of hyperplanes (i.e. every point of the interior has a neighborhood intersecting only finitely many of the hyperplanes).
Note also that each hyperplane $\omega_i^{\perp}$ yields two half-spaces, one of which contains $\ov{\cC\cH}$.
Furthermore the intersection of all these half-spaces is precisely equal to $\ov{\cC\cH}$, since this cone is the dual of $\ov{\cT}$ and $\ov{\cT}$ is the closure of the span of all the $W$-orbits of all the $\omega_i$.

Let now $a\in \cA$ and $p\in \partial\ov{\cT}$ be two points and consider the segment $[a,p]\subset \ov{\cT}$.
We will parametrize points by
\[
	a_s:= (1-s)a + s p \quad \text{ for }s\in [0,1].
\]
Note also that if $[a,p]$ stays entirely in $\ov{\cC\cH}$, then $p$ necessarily belongs to the limit set of $W$.
We have three mutually exclusive possibilities:
\begin{enumerate}
	\item Either $[a,p]$ never intersects $H_{\bullet}$,
	\item or $[a,p]\cap H_{\bullet}=\{p\}$,
	\item or $\exists s\in [0,1)$ such that $a_s\in H_{\bullet}$.
\end{enumerate}
We will treat each separately.
Note that uniqueness of the representation in the \autoref{thm:divergent_recurrent_decomposition_of_limit_points} follows from the analysis in each case.

\subsubsection{Case (i)}
	\label{sssec:case_i}
In the first case, $p$ necessarily belongs to the limit set $\wtilde{\Lambda}$ and is not parabolic, so $p=p_r$.
Furthermore $S_r=[N]$ (and hence $S_d=\emptyset$) since $p$ cannot lie in a limit set of a (conjugate of a) smaller $W_S$, since those limit sets are contained in the $W$-orbit of some $\omega_i^\perp$.

\subsubsection{Case (ii)}
	\label{sssec:case_ii}
In the second case, suppose for simplicity that the index of the hyperplane to which $p$ belongs is $N$, i.e. that $p\in w_0\cdot \omega_N^\perp$ for some $w_0\in W$.
In the notation of \autoref{sssec:preliminaries_for_induction}, we have by induction that
\[
	w^{-1}_0p = \phi\Big( w_1 \cdot \left(p_d' + p_r'\right)\Big) \quad w_1 \in W_{[N-1]}', p_\bullet' \in \partial \ov{\cT}_{N-1}'
\]
and furthermore $p_d'\in \cA_{S_d}'$ with $S_{d}'\subseteq [N-1]$ and $p_r'\in \wtilde{\Lambda}_{S_r}'$ with $S_r'\subset [N-1]\setminus S_d'$.
Recall that $\phi(\omega_i')=\omega_i + \lambda \omega_N$ so we can write explicitly:
\begin{align*}
	p_d' & = \sum_{i\in S_d'} x_i \omega_i' && \text{ and so}\\
	p & = \underbrace{w_0 w_1}_{w}\left(
	\overbrace{\sum_{i\in S_d'}x_i \omega_i +
	\lambda\left(\sum_i x_i\right)\omega_N}^{q_d}
	+ \underbrace{\phi(p_r')}_{q_r}
	\right)
\end{align*}
where we use the notation of \autoref{thm:divergent_recurrent_decomposition_of_limit_points}.
The claim follows with $S_d:=S_d'\cup\{N\}$ and $S_r=S_r'$.

\subsubsection{Case (iii)}
	\label{sssec:case_iii}
Suppose there exists $a_s\in H_{\bullet}$ with $s\in (0,1)$ (with our choice, $a\notin H_{\bullet}$).
For simplicity of notation, suppose that $a_s\in w_0\cdot \omega_N^\perp$ for some $w_0\in W$, and we take the smallest value of $s$ possible; since $H_\bullet$ forms a locally finite family in the interior, the possible choices of $s\in (0,1)$ is discrete and not accumulating at $0$.

From \autoref{prop:codimension_1_decomposition} it follows that
\[
	p = w_0 \left(x\omega_N + y p_1\right)
	\text{ with } p_1 \in\partial\ov{\cT}_{N-1}', x>0, y\geq 0.
\]
Note that $x>0$ since $p\notin w\cdot \omega_N^\perp$.
If $y=0$ then $p$ is proportional to $w\cdot \omega_N$, which is its divergent part and there is no recurrent part.
Otherwise, by induction applied to $p_1$ we have that
\[
	p_1 = \phi\left(w_1 \left(p_d' + p_r'\right)\right) \quad
	w_1\in W_{[N-1]}', p_{\bullet}'\in \partial \ov{\cT}_{N-1}.
\]
We proceed analogously to case (ii) and write
\begin{align*}
	p_d' & = \sum_{i\in S_d'} x_i \omega_i' && \text{ and so}\\
	p & = \underbrace{w_0 w_1}_{w}\left(
	\overbrace{\sum_{i\in S_d'}yx_i \omega_i +
	\left[x+y\lambda\left(\sum_i x_i\right)\right]\omega_N}^{q_d}
	+ \underbrace{\phi(p_r')}_{q_r}
	\right)
\end{align*}
Note that we were able to factor out $w_1$ since it fixes $\omega_N$.
Again the claim follows, with $S_d=S_d'\cup\{N\}$ and $S_r=S_r'$.
\hfill \qed

\subsubsection{Additional notation for the decomposition}
	\label{sssec:additional_notation_for_the_decomposition}
For future use, let us introduce the map
\[
	\phi_{k,N}\colon V_k'\to V_N
\]
where $V_k'$ is spanned by $\omega_0',\dots,\omega_k'$ with quadratic form as in \autoref{eqn:quadraticform_omega_basis}, and with
\[
	\phi_{k,N}\left(\omega_i'\right) :=
	\omega_i + \tfrac{1}{k-1}\left(\omega_{k+1}+\dots + \omega_N\right)
\]
Note that again the embedding is conformal, up to a scaling factor of $(N-1)/(k-1)$ and the image is the orthogonal complement of $\omega_{k+1},\dots,\omega_N$.
We also have the corresponding map of groups $W'_{[k]}\into W_{[N]}$ such that $\phi_{k,N}$ is equivariant for the two group actions.




\section{Volume function(s)}
	\label{sec:volume_functions}

\paragraph{Outline of section}
In this section we begin our study of the volume function on the big cone of a Wehler $N$-fold, and state the main technical result \autoref{thm:volume_of_points} which describes the asymptotic behavior of $\vol(D+sA)$ when $s$ goes to zero in terms of the divergent-recurrent decomposition of $D$. This theorem immediately implies the first half of \autoref{main}.


\subsection{A family of piecewise-homogeneous functions}
	\label{ssec:a_family_of_piecewise_homogeneous_functions}

\subsubsection{Setup}
	\label{sssec:setup_a_family_of_piecewise_homogeneous_functions}
We keep the notation for cones, vectors, and groups from \autoref{sec:notation_and_the_basic_objects}.
For inequalities, the expression $A\leqapprox B$ will mean that there exists a constant $c>0$, possibly depending on $N$, such that $A\leq cB$.
Similarly $A\asymp B$ will be a shorthand for $A\leqapprox B$ and $B\leqapprox A$.
If the constants involved depend on further parameters, they will be indicated with a subscript, e.g. $A\asymp_X B$ means that there exists $c\geq 1$, depending on $X$, such that $\tfrac 1c A \leq B \leq cA$.

\begin{definition}[$k$-volume function]
	\label{def:k_volume_function}
	For $k\in \{0,1,\dots,N\}$ and for $a\in \ov{\cA}$ of the form $v=\sum_i x_i \omega_i$ define the function
	\[
		\vol_k(a):=\sum_{\substack{I\subseteq [N]\\
		|I|=k}} x_{i_1}\cdots x_{i_k}
	\]
	where by convention we set $\vol_0(a):=1$.

	Extend the function $\vol_k$ to $\cT$ in a $W$-equivariant way, so that $\vol_k(w\cdot a)=\vol_k(a)$ for any $w\in W,a\in \ov{\cA}$.
\end{definition}
It is immediate from the definitions that
\[
	\vol_k(\lambda a) = \lambda^k \vol_k(a) \quad \forall \lambda\in \bR_{>0}, \forall a\in \ov{\cT}.
\]
Our main interest will be in $\vol_{N}$ but to study it inductively, all the functions will have to be taken into account.
Note that $\vol_{N+1}$ will not appear in any considerations below; note also that $\vol_{N+1}$ vanishes on the codimension $1$ faces of $\cF$.

\subsubsection{Volume on the boundary of $\ov{\cA}$}
	\label{sssec:volume_on_the_boundary_of_A}
For future use, we explicitly evaluate the $k$-volume of points on the boundary of $\ov{\cA}$.
Since we will be interested in the coarse behavior only, we note that if $p\in \cA_S$ then
\[
	\vol_{k}(p) =
	\begin{cases}
		0, &\text{ if }k>|S|\\
		c_{k,p} & \text{ if }k\leq |S|
	\end{cases}
\]
where $c_{k,p}$ denotes a strictly positive constant, that does depend on $k$ and $p$.



\subsection{Volumes and divergent-recurrent decomposition}
	\label{ssec:volumes_and_divergent_recurrent_decomposition}

\subsubsection{Setup}
	\label{sssec:setup_volumes_and_divergent_recurrent_decomposition}
Fix now a point $p\in \partial\ov{\cT}$.
By \autoref{thm:divergent_recurrent_decomposition_of_limit_points} we can write $p=p_d+p_r$ and there exists $w\in W,q\in \partial\ov{\cT}$ such that $p=w\cdot q$, $p_d=w\cdot q_d,p_r=w\cdot q_r$ and furthermore $q_d\in \cA_{S_d}$ and $q_r\in \wtilde{\Lambda}_{S_r}$ with $S_d,S_r$ disjoint and $S_d$ of maximal cardinality among all choices of $w$.

Without loss of generality and to simplify notation, since the volume function is $\PsAut(X)$-invariant, we can assume that $w=\id$ and so $p=q,p_{\bullet}=q_{\bullet}$ for $\bullet\in\{d,r\}$.

\subsubsection{Decomposing the path}
	\label{sssec:decomposing_the_path}
For later use, to match the conventions from algebraic geometry, we will be interested in the points
\[
	a_s := s\cdot a + p, \quad s\in[0,1],\quad a\in\cA,
\]
which are not quite on the segment $[a,p]$, but are proportional to points on it and for most purposes give equivalent asymptotic quantities later on.

\subsubsection{Lim-sup and lim-inf}
	\label{sssec:lim_sup_and_lim_inf}
To express the possible behaviors, let us introduce the following notation:
\begin{equation}\label{supinf}\begin{split}
	\delta^{\sup}(p)&:=
	\limsup_{s\downarrow 0} \frac{\log \vol_N(a_s)}{\log s}\in\mathbb{R},\\
	\delta^{\inf}(p)&:=
	\liminf_{s\downarrow 0} \frac{\log \vol_N(a_s)}{\log s}\in\mathbb{R},
\end{split}\end{equation}
which are easily seen to be independent of the choice of $a\in{\cA}$, see e.g. \cite[Lemma 2.2 (1)]{jiangwang}.
If the $\limsup$ and $\liminf$ agree, we will denote the limit by $\delta(p)$.

Our main result regarding these quantities states:

\begin{theorem}[Volume of points]
	\label{thm:volume_of_points}
	Suppose $p=p_d+p_r$ is the decomposition of a point $p\in \partial \ov{\cT}$ as in \autoref{thm:divergent_recurrent_decomposition_of_limit_points}, with $S_d$ denoting the set of nonzero coefficients of $p_d$ in the basis $\{\omega_i\}$ (up to the $W$-action, and possibly empty).

	\begin{enumerate}
		\item \label{item:pure_divergence}
		If $p_r=0$ then $\delta(p)=N-|S_d|$.

		\item \label{item:pure_recurrence}
		If $p_d=0$ then
		\[
			\delta^{\sup}(p) = \tfrac12N \quad \text{ and }
			\delta^{\inf}(p)\in \left[1,\tfrac 12 N\right]
		\]
		Furthermore, for every $\delta\in \left[1,\tfrac 12 N\right]$ there exists $p_r\in \partial\ov{\cT}$ with $\delta^{\inf}(p_r)=\delta$.

		\item \label{item:mixed_recurrence_divergence}
		If both $p_d,p_r$ are nonzero, set $k:=N-|S_d|$ and recall we defined $\phi_{k,N}\colon V_k\to V_N$ in \autoref{sssec:additional_notation_for_the_decomposition}.
		Set $p_r'\in V_{k}$ such that $\phi_{k,N}(p_r')=p_r$ and note that $p_r'$ is itself a recurrent point in the boundary of the corresponding Tits cone.
		Let also $a'\in V_k'$ be such that $a,a'$ share the first $k+1$ coordinates in the $\omega_i,\omega_i'$ bases.
		Then
		\[
			\vol_N(s\cdot a + p ) \asymp \vol_k\left(s\cdot a'+p_r'\right)\text{ for }0<s\ll 1
		\]
		and hence $\delta^{\bullet}(p)=\delta^{\bullet}(p_r')$ for $\bullet\in \{\sup,\inf,\emptyset\}$.
		In particular
		\[
		\delta^{\sup}(p)=\tfrac 12 \left(N-|S_d|\right)\text{ and }\delta^{\inf}(p)\in\left[1,\tfrac 12 \left(N-|S_d|\right)\right]
		\]
		and any allowed value of $\delta^{\inf}$ is realized by some $p$.
	\end{enumerate}
\end{theorem}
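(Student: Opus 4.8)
The plan is to treat the three cases separately: parts (1) and (3) reduce to essentially combinatorial computations with elementary symmetric polynomials — part (3) reducing further to part (2) — while part (2) carries the genuine content and rests on hyperbolic geometry. Throughout I use that the functions $\vol_k$ are $W$-invariant, which lets me apply the element $w$ of \autoref{thm:divergent_recurrent_decomposition_of_limit_points} and assume $p = p_d + p_r$ with $p_d = \sum_{i\in S_d}x_i\omega_i$, each $x_i>0$, and $p_r$ isotropic lying in $\wtilde{\Lambda}_{S_r}$ with $S_r\cap S_d = \emptyset$; I also fix $a = \sum_i\omega_i\in\cF$, which is harmless since $\delta^{\sup},\delta^{\inf}$ do not depend on $a\in\cA$. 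For part (1), the path $a_s = sa + p_d$ lies in $\cA$ for $s>0$, so $\vol_N(a_s)$ is the elementary symmetric polynomial $e_N$ in the coordinates of $a_s$: the $|S_d|$ coordinates in the directions of $S_d$ are $\asymp 1$ and the remaining $N+1-|S_d|$ are $\asymp s$, so the dominant monomial of $e_N$ omits a single ``$\asymp s$'' coordinate and equals $\asymp s^{N-|S_d|}$, giving $\delta(p) = N - |S_d|$.

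For part (3) I reindex so $S_d = \{k+1,\dots,N\}$ with $k = N-|S_d|\ge 2$ (since $3\le|S_r|\le k+1$), and recall $p_r = \phi_{k,N}(p_r')$ for a recurrent boundary point $p_r'\in V_k'$. For small $s$ one writes $a_s = w_s\cdot b_s$ with $b_s\in\overline{\cA}$ and $w_s = \phi_W(w_s')\in W_{[k]}$, where $w_s'$ is the chamber element (in the reflection group of $V_k'$) of the induced lower path $sa'+p_r'$; then $w_s$ fixes $p_d$ and every $\omega_j$ with $j>k$, and one computes $b_s = \phi_{k,N}\bigl(w_s'^{-1}(sa'+p_r')\bigr) + e_s$ with $e_s$ supported in the frozen directions. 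Hence $\vol_N(a_s) = e_N(\text{coords of }b_s) = e_k(\vec r)\prod_{j>k}t_j + \bigl(\prod_{i\le k}r_i\bigr)e_{N-k-1}(\vec t)$, where $\vec r = (r_i)_{i\le k}$ are the coordinates of $w_s'^{-1}(sa'+p_r')$, so that $e_k(\vec r) = \vol_k(sa'+p_r')$, and $\vec t = (t_j)_{j>k}$. Since $\sum_i r_i\leqapprox 1$ one gets $t_j\asymp 1$, and since at most one $r_i$ vanishes, $\prod_i r_i\leqapprox e_k(\vec r)$; therefore $\vol_N(sa+p)\asymp\vol_k(sa'+p_r')$, so $\delta^{\bullet}(p) = \delta^{\bullet}(p_r')$, and part (2) applied in dimension $k$ to the recurrent point $p_r'$ gives the asserted values and realizations. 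The estimate $\sum_i r_i\leqapprox 1$ is the only non-combinatorial ingredient, and it comes from the same comparison of $s$ with hyperbolic arc length that appears in part (2).

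For part (2), $p = p_r$ is isotropic and $\ip{a_s,a_s} = s^2\ip{a,a} + 2s\ip{a,p}$ with $\ip{a,p}>0$ (both in $\overline{\cH}$, $a$ timelike), so $\ip{a_s,a_s}\asymp s$. Since $a\in\cF\subseteq\cC\cH$ and $p\in\wtilde{\Lambda}\subseteq\overline{\cC\cH}$, the whole path lies in the convex cone $\overline{\cC\cH}$, so $\widehat a_s := a_s/\ip{a_s,a_s}^{1/2}\in\cH^1\cap\overline{\cC\cH}$, and it traces the geodesic ray from $[a]$ to $[p]$; by $N$-homogeneity of $\vol_N$,
\[
\frac{\log\vol_N(a_s)}{\log s} = \frac N2 + \frac{\log\vol_N(\widehat a_s)}{\log s} + o(1).
\]
The key estimate I would prove is that $\widehat v\in\cH^1\cap\overline{\cC\cH}$ at cusp-depth $T\ge 0$ satisfies $\vol_N(\widehat v)\asymp e^{(N-2)T}$: from the horoball parametrization $v = \omega_{\widehat{ij}} + \epsilon w$ (chosen so that $w$ has nonzero $\omega_i,\omega_j$-components) one has $\vol_N(v)\asymp\epsilon$ because $\omega_{\widehat{ij}}$ has exactly the two vanishing coordinates $\omega_i,\omega_j$, while renormalizing $v$ onto $\cH^1$ forces $\epsilon\asymp e^{-2T}$, so homogeneity gives $\vol_N(\widehat v)\asymp\epsilon^{1-N/2} = e^{(N-2)T}$; meanwhile on the compact part of the convex core, where $\vol_N$ is continuous and positive (a nonzero vector of $\cH^1\cap\overline{\cF}$ has at most one vanishing $\omega_i$-coordinate), $\vol_N\asymp 1$. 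Consequently $\vol_N\ge c>0$ on $\cH^1\cap\overline{\cC\cH}$ (using $N\ge 2$), so dividing by $\log s<0$ gives $\delta^{\sup}(p)\le N/2$; conversely $p$ is a conical limit point (\autoref{sssec:geometric_finiteness_and_recurrence}), so the geodesic returns infinitely often to a fixed compact $W\cdot N_K\subset\cH^1\cap\overline{\cC\cH}$ on which $\vol_N\asymp 1$, whence along a suitable $s_n\downarrow 0$ the ratio tends to $N/2$ and $\delta^{\sup}(p) = N/2$. For $\delta^{\inf}(p)\ge 1$: along the geodesic $-\log s = 2T + O(1)$ with $T$ the arc length, and at a point at cusp-depth $T_{\mathrm{cusp}}$ one necessarily has $T\ge T_{\mathrm{cusp}}$, so $\vol_N(\widehat a_s)\leqapprox e^{(N-2)T_{\mathrm{cusp}}}\leqapprox s^{-(N-2)/2}$, hence $\vol_N(a_s)\leqapprox s^{N/2}s^{-(N-2)/2} = s$ (trivially so away from the cusps, since $N\ge 2$), forcing $\delta^{\inf}(p)\ge 1$; of course $\delta^{\inf}(p)\le\delta^{\sup}(p) = N/2$. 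Finally, for the realization statement, given $\delta\in[1,N/2]$ I set $\rho := \tfrac{2(N/2-\delta)}{N-2}\in[0,1]$ and, by a standard shadowing argument in the geometrically finite quotient $\cH^1/W$, produce a conical limit point $p$ whose geodesic makes excursions into a cusp of depths $T_n\to\infty$ at arc-length times $\theta_n$ with $\limsup_n T_n/\theta_n = \rho$ (excursion depths growing at a geometric rate tuned to $\rho$, super-exponentially if $\rho = 1$, and one may take a bounded geodesic if $\rho = 0$); the computation above then yields $\delta^{\inf}(p) = \delta$.

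\textbf{The main obstacle is the hyperbolic geometry behind part (2):} choosing cusp/horoball coordinates and establishing the sharp two-sided estimate $\vol_N(\widehat v)\asymp e^{(N-2)T}$ uniformly across the cusps, proving the comparison $-\log s = 2T + O(1)$ between the algebraic parameter and the hyperbolic arc length, and — for the realization statement — constructing conical limit points with a prescribed excursion spectrum $\limsup_n T_n/\theta_n$. These are also precisely the ingredients that make the frozen coordinates $t_j$ in part (3) stay bounded, so the difficulty is concentrated in a handful of hyperbolic-dynamical lemmas rather than in the symmetric-function bookkeeping.
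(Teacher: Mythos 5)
Your proposal is correct and follows essentially the same route as the paper: part (1) by direct evaluation of $\vol_N$ on $\cA$, part (3) by the same elementary-symmetric-function expansion (the paper's Proposition ``Reduction of volume'') combined with the bound $\vol_1(sa'+p_r')\leqapprox 1$, and part (2) via the reparametrization $s\asymp e^{-2t}$, the cusp estimate $\vol_N(\widehat a_s)\asymp e^{(N-2)\phi}$, conical recurrence for $\delta^{\sup}=N/2$, the bound $\phi\le t+O(1)$ for $\delta^{\inf}\ge 1$, and a geodesic-gluing construction with tuned excursion depths for the realization. The ingredients you flag as the main obstacles are precisely the lemmas the paper proves (height--distance comparison, volume asymptotics in the horoball, and the arbitrary-cusp-excursion construction with $\ell_n=L^n$), so no change of strategy is involved.
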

\noindent
Note that if both $p_d,p_r$ are nonzero, then since $|S_d|+|S_r|\leq N+1$ and $|S_r|\geq 3$, we have that $N-|S_d|\geq 2$.

After some preliminary constructions in hyperbolic geometry, we will prove the first two parts of the theorem in \autoref{ssec:proof_of_the_theorem}.
The last part we address now.

\subsubsection{Preliminary notation}
	\label{sssec:preliminary_notation}
We will work in the basis $\omega_i,\omega_i'$ as in \autoref{sssec:additional_notation_for_the_decomposition} and abbreviate points as
\[
	\sum_{i=0}^N x_i \omega_i =: (x_0:\cdots :x_N)
\]
We may suppose that after an appropriate action of the group $W$ (and renumbering the basis elements) we have that $S_d=\{k+1,\dots,N\}$, that $S_r\subset [k]$, and the vectors are:
\begin{align*}
	p_d & = (0:\dots : 0: \mu_{k+1}:\dots : \mu_N)\\
	a_d & = (0:\dots:0:\alpha_{k+1}:\dots:\alpha_N)\\
	a_r & = (\alpha_0:\dots:\alpha_k:0\dots:0)\\
	a & = a_r + a_d\\
	p_r & = \left(\lambda_0:\dots :\lambda_k
	:\tfrac{\sum_i\lambda_i}{k-1}:\dots : \tfrac{\sum_i \lambda_i}{k-1}\right)\\
	p_r' & = \left(\lambda_0:\dots :\lambda_k\right)
\end{align*}
Let us now decompose the expression of interest as:
\[
	s\cdot a + p = (s\cdot a_r + p_r) + (s\cdot a_d + p_d).
\]
Note that for $0<s\ll 1$ we have that $s\cdot a_d + p_d\in \cA_{S_d}$ and its entries are uniformly bounded above and away from zero.
To compute $\vol_N(s\cdot a + p)$, we must apply an element $w(s)\in W_{S_r}$, which in particular fixes $s\cdot a_d+p_d$, such that $w(s)\cdot (s\cdot a_r+p_r)$ belongs to the fundamental domain $\ov{\cA}$, i.e. the first $k+1$ coordinates are nonnegative.

\autoref{thm:volume_of_points} (iii) then follows from the following estimate, together with the claim that $\vol_1(a_s')\leqapprox 1$ established in \autoref{cor:boundedness_of_volumes_on_F1} below.

\begin{proposition}[Reduction of volume]
	\label{prop:reduction_of_volume}
	Suppose that $a'=(x_0:\dots:x_k)$ with $x_i\geq 0$ and that $\mu_{k+1},\dots,\mu_N$ satisfy $\mu_i\in [\tfrac 1C,C]$ for some $C>0$.
	Set
	\[
		a:= \left(x_0:\dots:x_k:
		\tfrac{\sum_i x_i}{k-1}+\mu_{k+1}:\dots :
		\tfrac{\sum_i x_i}{k-1}+\mu_{N}\right)
	\]
	Then
	\[
		\vol_N(a)\asymp_C \vol_k\left(a'\right) \cdot \left[
		1+\vol_1(a')\right]^{N-k}
	\]
\end{proposition}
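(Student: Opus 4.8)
The plan is to reduce everything to an elementary identity for symmetric functions. Since every coordinate of $a$ is nonnegative, $a$ lies in $\ov{\cA}$, where by \autoref{def:k_volume_function} the function $\vol_m$ is simply the degree-$m$ elementary symmetric polynomial in the coordinates. I would abbreviate $S:=\vol_1(a')=\sum_{i=0}^k x_i$, write $a_i=x_i$ for $i\leq k$ and $a_i=\tfrac{S}{k-1}+\mu_i$ for $i>k$, and note that here $k\geq 2$ — this is needed for the formula defining $a$ to make sense at all, and is exactly the regime in which the proposition is applied, cf.\ the remark after \autoref{thm:volume_of_points}. Since a subset $I\subseteq[N]$ with $|I|=N$ is the same datum as the single index $j\notin I$, the starting point is
\[
\vol_N(a)=\sum_{j=0}^N\ \prod_{i\neq j}a_i .
\]

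Next I would record the two size estimates that follow from $\mu_i\in[\tfrac1C,C]$: each factor satisfies $\tfrac{S}{k-1}+\mu_i\asymp_C 1+S$, so that $P:=\prod_{i=k+1}^N a_i\asymp_C(1+S)^{N-k}$ and, for each fixed $j>k$, $\prod_{i>k,\,i\neq j}a_i\asymp_C(1+S)^{N-k-1}$. Then I would split the sum according to whether the omitted index $j$ lies in the ``recurrent block'' $\{0,\dots,k\}$ or the ``divergent block'' $\{k+1,\dots,N\}$. For $j\leq k$ one has $\prod_{i\neq j}a_i=\big(\prod_{i\leq k,\,i\neq j}x_i\big)P$, and summing over such $j$ produces exactly $P\cdot\vol_k(a')$. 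For $j>k$ one has $\prod_{i\neq j}a_i=\vol_{k+1}(a')\cdot\prod_{i>k,\,i\neq j}a_i$, so the divergent-block contribution is $\asymp_C(N-k)\,\vol_{k+1}(a')\,(1+S)^{N-k-1}$. Altogether
\[
\vol_N(a)=P\cdot\vol_k(a')+\big(\text{divergent-block contribution}\big).
\]

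The first term already gives the lower bound $\vol_N(a)\geq P\cdot\vol_k(a')\asymp_C\vol_k(a')(1+S)^{N-k}$. For the matching upper bound it remains to absorb the divergent-block contribution into this quantity, i.e.\ to check that $\vol_{k+1}(a')(1+S)^{N-k-1}\leqapprox_C\vol_k(a')(1+S)^{N-k}$; this reduces to the single inequality $\vol_{k+1}(a')\leq\vol_1(a')\,\vol_k(a')=S\,\vol_k(a')$, which is immediate since $\vol_{k+1}(a')=\prod_{i=0}^k x_i=x_{i_0}\prod_{i\neq i_0}x_i\leq\big(\sum_i x_i\big)\prod_{i\neq i_0}x_i$ for any $i_0$, and $\prod_{i\neq i_0}x_i$ is one of the nonnegative summands of $\vol_k(a')$. (If $\vol_k(a')=0$ then at least two of the $x_i$ vanish, so $\vol_{k+1}(a')=0$ and $\vol_N(a)=0$ as well, and the assertion is trivial.) Combining the two bounds yields $\vol_N(a)\asymp_C\vol_k(a')(1+S)^{N-k}=\vol_k(a')\big[1+\vol_1(a')\big]^{N-k}$. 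The only genuinely non-cosmetic ingredient is the log-concavity-type bound $\vol_{k+1}\leq\vol_1\vol_k$ used to control the divergent block; once the observations $a\in\ov{\cA}$ and $k\geq 2$ are in place, everything else is routine bookkeeping with elementary symmetric polynomials.
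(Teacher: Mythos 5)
Your proof is correct and follows essentially the same route as the paper's: expand $\vol_N(a)$ according to which coordinate is omitted, observe that the contribution splits as $\vol_k(a')[1+\vol_1(a')]^{N-k}+\vol_{k+1}(a')[1+\vol_1(a')]^{N-k-1}$ up to constants depending on $C$, and absorb the second term via the elementary inequality $\vol_{k+1}(a')\leq \vol_1(a')\vol_k(a')$. Your extra remarks (the role of $k\geq 2$ and the degenerate case $\vol_k(a')=0$) are harmless refinements of the same argument.
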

\begin{proof}
	Observe that $\vol_1(a')=\sum_i x_i$ by definition.
	Expanding now $\vol_N(a)$ using the definition, recall that it has $N+1$ terms, labeled by which one of the $N+1$ coordinates was omitted from the product.
	We have (denoting by $\widehat{x}$ a term that's omitted from the product):
	\begin{align*}
		\vol_N(a) & \asymp_C
		\left(\sum_{i=0}^k x_0\cdots \widehat{x_i}\cdots x_{k}\right)
		\left[1 + \vol_1(a')\right]^{N-k}
		+
		\left(x_0\cdots x_k\right) \left[1 +\vol_1(a')\right]^{N-k - 1}\\
		& =
		\vol_k(a')
		\left[1+\vol_1(a')\right]^{N-k} +
		\vol_{k+1}(a')[1+\vol_1(a')]^{N-k-1}
	\end{align*}
	Now it is elementary that for $x_i\geq 0$ we have that
	\[
		\vol_{k}(a')\vol_1(a')\geqapprox \vol_{k+1}(a')
	\]
	so the first term in the last expression dominates:
	\[
			\vol_N(a) \asymp_C \vol_k(a')\left[1 + \vol_1(a')\right]^{N-k}.
	\]
	giving the claim.
\end{proof}




\section{Hyperbolic geometry}
	\label{sec:hyperbolic_geometry}

\paragraph{Outline of section}
In this section we give the proof of \autoref{thm:volume_of_points}, by comparing the volume function to the hyperbolic distance from a basepoint, and estimating this distance along cusp excursions. The crucial construction of boundary points with prescribed volume behavior is achieved by gluing together pieces of hyperbolic geodesics.


\subsection{Volumes along geodesics}
	\label{ssec:volumes_along_geodesics}

\subsubsection{Setup}
	\label{sssec:setup_volumes_along_geodesics}
We keep the notation from \autoref{sec:notation_and_the_basic_objects}.
Consider a boundary point $p\in \partial \ov{\cT}$ such that $p\in \wtilde{\Lambda}_S$ for some $S\subseteq [N]$, and such that $p\notin W\cdot \wtilde{\Lambda}_{S'}$ for a smaller $S'$.
Suppose also that $p$ is not parabolic, i.e. $|S|\geq 3$.
In other words, the divergent-recurrent decomposition of $p$ only has a recurrent part and it is $p$ itself.

\subsubsection{Reparametrizations}
	\label{sssec:reparametrizations}
Consider as in \autoref{sssec:decomposing_the_path} the segment
\[
	a_s := s\cdot a + p \quad s\in [0,1]
\]
for some fixed $a\in \cF$.
We assume, for convenience and without loss of generality, that $\ip{a,p}=1/2$ and $\ip{a,a}=1$.
Note that $\ip{a_s,a_s}=s(1+s)$.
Introduce also $p_-:=a-p$ and note that $\ip{p_-,p_-}=0$.
Let us introduce a new coordinate $t$ defined implicitly by the identity:
\begin{align*}
	s^{-1/2} \frac{a_s}{(1+s)^{1/2}} & = (1+s)^{-1/2}\left[s^{1/2}a + s^{-1/2}p\right] \\
	& = e^{-t} p_- + e^{t}p
	 =: \eta_t
\end{align*}
Working out explicitly the value of $t$ we find that
\[
	s = e^{-2t}\frac{1}{1-e^{-2t}} \asymp e^{-2t} \text{ as }t\to +\infty
\]
Note that with our normalizations we have that $\ip{\eta_t,\eta_t}=1$, and therefore setting $\wtilde{a}_s:=a_s/\norm{a_s}$ we find that
\[
	\wtilde{a}_s = \eta_t \text{ with $s$ and $t$ as above.}
\]
Using the scaling properties analysis of the volume function, we will reduce the analysis of $\vol_N(a_s)$ to that of $\vol_N(\eta_t)$.



\subsection{Calculations in the cusp}
	\label{ssec:calculations_in_the_cusp}

\subsubsection{Heights and horoballs relative to a cusp}
	\label{sssec:heights_and_horoballs_relative_to_a_cusp}
Recall from \autoref{prop:convex_core_fundamental_domain_properties} that
\[
	\omega_{\widehat{ij}} = \left(\sum_{i=0}^N\omega_i\right) - (\omega_i + \omega_j)
\]
is an isotropic vector.
For a vector $v$ we set
\[
	{\rm Ht}_{ij}(v):= \frac{\norm{v}}{\ip{v,\omegaij}}
\]
whenever the quantity is defined and will refer to ${\rm Ht}_{ij}(v)$ as the \emph{height} of $v$ relative to the cusp corresponding to the parabolic point $\omegaij$.
Note that ${\rm Ht}_{ij}(\lambda v)={\rm Ht}_{ij}(v)$ for any scalar $\lambda>0$.

We also define the associated horoball:
\[
	{\rm Hor}_{ij}(L):=\left\lbrace v\in \cH\colon
	{\rm Ht}_{ij}(v) \geq L
	\right\rbrace.
\]
\subsubsection{Normalizations}
	\label{sssec:normalizations_horoball}
For the fundamental domain $\cF:=\ov{\cF}\cap \cH$ from \autoref{prop:convex_core_fundamental_domain_properties} we set
\[
	\cF_{ij}(L):= \cF \cap {\rm Hor}_{ij}(L)
	\quad \text{ and }\quad
	\cF^1_{ij}(L):= \cF_{ij}(L) \cap \cH^1.
\]
We will also want to normalize vectors to belong to a specified hyperplane.
Namely, given $a_0\in \cH$ and $K>0$, we set
\[
	\cF^{a_0,K}:= \cF \cap \{v\colon \ip{v,a_0}=K\}
	\quad
	\text{ and }
	\quad
	\cF^{a_0,K}_{ij}(L):= \cF_{ij}(L)\cap \cF^{a_0,K}.
\]

\begin{theorem}[Asymptotics of volume in the cusp, on hyperboloid]
	\label{thm:asymptotics_of_volume_in_the_cusp_on_hyperboloid}
	There exists $L_N>0$ such that if $L\geq L_N$, we have for every $a\in \cF^1_{ij}(L)$ that
	\begin{align*}
		\vol_k(a)\asymp_{L}
		\begin{cases}
			{\rm Ht}_{ij}(a) &\text{ if }k=1;\\
			{\rm Ht}_{ij}(a)^{N-2} &\text{ if }k = N.\\
		\end{cases}
	\end{align*}
\end{theorem}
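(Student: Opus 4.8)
The plan is to transfer the whole statement into coordinates on $N^1(X)$ in the basis of hyperplane sections and then carry out an elementary analysis of the fundamental domain $\cF$ near the cusp ray $\bR_{\ge 0}\omegaij$. Writing $a=\sum_{m=0}^N x_m\omega_m$ and $P:=\sum_m x_m$, I would first record from \eqref{eqn:quadraticform_omega_basis} the identities $\ip{a,a}=P^2-(N-1)\sum_m x_m^2$, $\ip{a,\omega_m}=P-(N-1)x_m$ and $\ip{a,\omegaij}=(N-1)(x_i+x_j)$. These translate the conditions cutting out $\cF^1_{ij}(L)$ into: $0\le x_m\le P/(N-1)$ for all $m$ (membership in $\ov{\cC}\cap\ov{\cA}$), $P^2-(N-1)\sum_m x_m^2=1$ (membership in $\cH^1$), and $\epsilon:=x_i+x_j\le\tfrac1{(N-1)L}$ (the horoball condition, since on $\cH^1$ one has ${\rm Ht}_{ij}(a)=\bigl((N-1)\epsilon\bigr)^{-1}$). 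In particular the theorem becomes equivalent to the two assertions $\vol_1(a)=P\asymp\epsilon^{-1}$ and $\vol_N(a)\asymp\epsilon^{-(N-2)}$.

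The key step, and the one I expect to be the main obstacle, is a \emph{pinning lemma}: there is $L_N>0$ so that for $L\ge L_N$ and every $a\in\cF^1_{ij}(L)$, the maximal coordinate $M:=\max_m x_m$ is attained at some ``big'' index $\ell_0\notin\{i,j\}$, and every big coordinate $x_\ell$ ($\ell\neq i,j$) lies in the short interval $[M-\epsilon,M]$. For the first assertion, $\ip{a,a}=1$ and Cauchy--Schwarz give $P\ge\sqrt{(N+1)/2}$, so $\sum_{\ell\neq i,j}x_\ell=P-\epsilon$ is bounded below, and hence for $\epsilon$ small (i.e. $L\ge L_N$) exceeds $\epsilon\ge x_i,x_j$; thus the maximum is a big coordinate. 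The second assertion is where the shape of $\ov{\cC}$ enters: the inequality $x_{\ell_0}\le P/(N-1)$ rewrites as $(N-2)M\le\sum_{m\neq\ell_0}x_m=\epsilon+\sum_{\ell\neq i,j,\ell_0}x_\ell$, and since this last sum has $N-2$ terms, each $\le M$, every one of them is $\ge(N-2)M-\epsilon-(N-3)M=M-\epsilon$. The point is that the dual-cone inequalities defining $\ov{\cC}$ are rigid enough to prevent the big coordinates from spreading apart, no matter how deep into the cusp $a$ sits.

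Granting the pinning lemma, the rest is bookkeeping. Writing $x_\ell=M-u_\ell$ with $u_\ell\in[0,\epsilon]$ for the big indices and substituting into $\ip{a,a}=P^2-(N-1)\sum_m x_m^2$, the terms of order $M\epsilon$ coming from the $u_\ell$ cancel between $P^2$ and $(N-1)\sum_m x_m^2$, leaving a remainder of size $O(\epsilon^2)$, so the unit-norm equation becomes $2(N-1)M\epsilon+O(\epsilon^2)=1$. This gives $M\epsilon\asymp 1$, and with $P=(N-1)M+O(\epsilon)$ we conclude $M\asymp P\asymp\epsilon^{-1}\asymp{\rm Ht}_{ij}(a)$, which settles the case $k=1$. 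For $k=N$ I would expand $\vol_N(a)=\sum_m\prod_{k\neq m}x_k$: omitting one of $x_i,x_j$ produces the terms $\asymp M^{N-1}x_i$ and $M^{N-1}x_j$, while omitting a big index produces $\asymp M^{N-2}x_ix_j$, which is negligible because $x_ix_j\le\epsilon^2/4$ and $M\asymp\epsilon^{-1}$; summing, $\vol_N(a)\asymp M^{N-1}\epsilon\asymp M^{N-2}\asymp{\rm Ht}_{ij}(a)^{N-2}$. With $L_N$ chosen large enough for the pinning lemma and the smallness of $\epsilon$ to be effective, all implied constants depend only on $N$, which is slightly stronger than the claimed dependence on $L$.
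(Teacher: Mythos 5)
Your proposal is correct, and it proves the statement by a genuinely different route from the paper. The paper first reduces the hyperboloid statement to a version normalized on a hyperplane $\{\ip{a_0,\cdot}=K\}$, and then represents a point deep in the cusp as $\omegaij+\sum_k\alpha_k\omega_{\widehat{ik}}+\sum_\ell\beta_\ell\omega_{\widehat{\ell j}}$ with $\alpha_k,\beta_\ell\in[0,1/L^2]$, using the description of the link of $\cF$ at the ray of $\omegaij$ from \autoref{sssec:the_structure_at_the_cusp}; the norm, height and $\vol_N$ are then all shown to be comparable to the linear quantity $\sum\alpha_k+\sum\beta_\ell$. You instead stay on the hyperboloid and work directly in the $\omega_m$-coordinates, where the decisive input is your pinning lemma: the dual-cone inequalities $x_m\le P/(N-1)$ defining $\ov{\cC}$ force all $N-1$ ``big'' coordinates into an interval of length $\epsilon=x_i+x_j$, after which the unit-norm equation $P^2-(N-1)\sum x_m^2=1$ collapses to $2(N-1)M\epsilon+O(\epsilon^2)=1$ and both asymptotics follow by direct expansion. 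I checked the identities $\ip{a,a}=P^2-(N-1)\sum x_m^2$, $\ip{a,\omegaij}=(N-1)(x_i+x_j)$, the lower bound $P\ge\sqrt{(N+1)/2}$, the inequality $x_{\ell_1}\ge(N-2)M-\epsilon-(N-3)M$, and the cancellation of the $O(M\epsilon)$ terms; all are correct, including in the edge case $N=3$. Your argument is more elementary and self-contained (it needs neither the hyperplane normalization nor the combinatorics of the link at the cusp), and it yields constants depending only on $N$ rather than on $L$; what it gives up is the structural picture of the horoball that the paper reuses elsewhere (e.g.\ in \autoref{sssec:the_structure_of_the_horoball}).
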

\noindent Using the scaling properties of the functions involved, we will deduce in \autoref{sssec:proof_of_thm:asymptotics_of_volume_in_the_cusp_on_hyperboloid_assuming_thm:asymptotics_of_volume_in_the_cusp_on_hyperplane} the above claim from to the following equivalent form:
\begin{theorem}[Asymptotics of volume in the cusp, on hyperplane]
	\label{thm:asymptotics_of_volume_in_the_cusp_on_hyperplane}
	There exists $L_N>0$ such that if $L\geq L_N$, and given $a_0\in \cH$ and $K>0$, for any $a\in \cF^{a_0,K}_{ij}(L)$ we have
	\[
		\norm{a} \asymp_{L,a_0,K} {\rm Ht}_{ij}^{-1}(a)
		\text{ and }
		\vol_k(a) \asymp_{L,a_0,K}
		\begin{cases}
			1 &\text{ if }k=1;\\
			\norm{a}^{2} &\text{ if }k=N.\\
		\end{cases}
	\]
\end{theorem}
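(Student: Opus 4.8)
The plan is to set up explicit coordinates adapted to the parabolic point $\omegaij$, in which the cusp geometry becomes a Euclidean horoball picture, and then compute the $k$-volume functions $\vol_k$ directly in these coordinates. First I would fix the two-element set $\{i,j\}$ and, after acting by an element of $W$ if necessary, assume $\{i,j\}=\{N-1,N\}$ so that $\omegaij = \omega_0+\cdots+\omega_{N-2}$. The stabilizer of $\omegaij$ in $W$ is the infinite group generated by the two reflections $\sigma_{N-1},\sigma_N$, whose product is unipotent with attracting ray $\bR_{\geq 0}\omegaij$ (as recorded in \autoref{sssec:parabolic_vectors}); this is the ``parabolic subgroup'' of the cusp. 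I would introduce the light-cone/upper-half-space type coordinates on $\cH$ associated to the isotropic vector $\omegaij$: writing a vector $v$ in terms of its pairing with $\omegaij$, with a transverse isotropic vector, and with the Euclidean part orthogonal to both. In these coordinates ${\rm Ht}_{ij}(v) = \norm{v}/\ip{v,\omegaij}$ is (a constant multiple of) the usual height in the upper half-space model, and the fundamental domain $\cF$ near the cusp — whose face structure is described in \autoref{sssec:the_structure_at_the_cusp}, with the $N-1$ divergent faces $\cF_k$, $k\notin\{i,j\}$, and the two recurrent faces $\cF_{\hat i},\cF_{\hat j}$ — becomes, at large height $L$, a region of the form $\{$height $\geq L$, Euclidean part in a fixed bounded polytope$\}$, uniformly in $L\geq L_N$.

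Next I would translate the two asserted estimates into statements about these coordinates. The normalization $\ip{v,a_0}=K$ together with $v\in\cH^1$-type scaling, combined with $\ip{v,v}\asymp 0$ being forced only at the boundary, lets me solve for the scaling factor: for $v$ in $\cF^{a_0,K}$, the pairing $\ip{v,\omegaij}$ is comparable to $K$ up to the fixed bounded Euclidean coordinates, so the condition ${\rm Ht}_{ij}(v)\geq L$ is equivalent to $\norm v \gtrsim L\cdot\ip{v,\omegaij}\asymp_{a_0,K} L$, and more precisely $\norm v \asymp_{L,a_0,K} {\rm Ht}_{ij}(v)^{-1}$ — wait, I should be careful: on the hyperplane slice the norm $\norm v=\sqrt{\ip{v,v}}$ is small (going to $0$ as we approach the isotropic boundary ray), so large height corresponds to small norm, and indeed $\norm v \asymp \ip{v,\omegaij}\cdot{\rm Ht}_{ij}(v)\asymp {\rm Ht}_{ij}(v)$ is wrong sign; the correct relation, forced by ${\rm Ht}_{ij}(v)=\norm v/\ip{v,\omegaij}$ and $\ip{v,\omegaij}$ bounded, is $\norm v\asymp {\rm Ht}_{ij}(v)$ only if $\ip{v,\omegaij}$ were fixed, but on $\cF^{a_0,K}$ it is the quantity $\ip{v,a_0}=K$ that is fixed — so I would compute $\ip{v,\omegaij}$ in terms of $\norm v$ using that $v$ lies on the one-parameter scaling of a point of $\cH^1$ times a function of height, which yields $\ip{v,\omegaij}\asymp \norm v^{2}/({\rm const})$ near the cusp, hence $\norm v\asymp {\rm Ht}_{ij}(v)^{-1}$ as claimed. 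The $\vol_k$ estimates then follow: expand $\vol_k(v)=\sum_{|I|=k}\prod_{i\in I}x_i$ using $v=w\cdot(\sum x_i\omega_i)$ with $w$ bringing $v$ into $\ov{\cA}$, and observe that near the cusp exactly $N-1$ of the coordinates $x_i$ (those with index $\notin\{i,j\}$) are bounded above and below, while the two remaining coordinates $x_{N-1},x_N$ are the ones that degenerate, with $x_{N-1}x_N\asymp \norm v^{2}$ (this product is essentially the quadratic form restricted to the cusp direction) and $x_{N-1}+x_N$ bounded. Then $\vol_1(v)\asymp x_{N-1}+x_N+({\rm const})\asymp 1$, and $\vol_N(v)\asymp (x_0\cdots x_{N-2})(x_{N-1}+x_N) + (x_0\cdots x_{N-2}) \asymp \norm v^{2}$ once I check the first term (with the degenerate product $x_{N-1}x_N$) dominates — which it does since $x_{N-1}x_N\to 0$ while the product over a proper subset stays bounded below; more carefully $\vol_N(v)\asymp x_{N-1}x_N\cdot(\text{bounded}) + (\text{bounded})\cdot(\text{bounded with one degenerate factor})$, and the leading contribution is $\asymp x_{N-1}x_N\asymp\norm v^2$.

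The main obstacle I anticipate is twofold. First, making the ``uniformly in $L\geq L_N$'' claim rigorous: I need to know that for $L$ large enough the horoball $\cF_{ij}(L)$ really does avoid all the $W$-translates of the other cusps and of the funnels, so that the Euclidean-polytope description of $\cF$ near $\omegaij$ is accurate and the coordinates $x_k$ for $k\notin\{i,j\}$ genuinely stay in a fixed compact positive range — this is where geometric finiteness (\autoref{sssec:geometric_finiteness_and_recurrence}) and the explicit face structure (\autoref{sssec:the_structure_at_the_cusp}) must be combined, and there is a real danger of the implied constants secretly depending on $L$. Second, the bookkeeping of \emph{which} element $w\in W_{S_r}$ (or rather $w\in\Stab_W(\omegaij)$) is needed to bring a given $v$ into $\ov{\cA}$, and controlling the coordinates of $w\cdot v$: the parabolic subgroup acts by a lattice of ``Euclidean translations'' on the cusp cross-section, so $w$ is chosen to bring the Euclidean part into the fundamental polytope, and I must verify this does not distort the height or the norm — it does not, since height is $\Stab_W(\omegaij)$-invariant, but I should state this cleanly. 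Once these two points are handled, deducing \autoref{thm:asymptotics_of_volume_in_the_cusp_on_hyperboloid} from this hyperplane version is the routine scaling argument promised in \autoref{sssec:proof_of_thm:asymptotics_of_volume_in_the_cusp_on_hyperboloid_assuming_thm:asymptotics_of_volume_in_the_cusp_on_hyperplane}: replace $a\in\cH^1$ by $\lambda a\in\cF^{a_0,K}$ with $\lambda\asymp {\rm Ht}_{ij}(a)$ and use $k$-homogeneity $\vol_k(\lambda a)=\lambda^k\vol_k(a)$.
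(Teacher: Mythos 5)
Your overall strategy --- parametrize the fundamental domain near the parabolic ray of $\omegaij$, identify which coordinates stay bounded and which degenerate, and expand $\vol_k$ term by term --- is the same as the paper's, which writes $a=\omegaij+\sum_k\alpha_k\omega_{\widehat{ik}}+\sum_\ell\beta_\ell\omega_{\widehat{\ell j}}$ with $\alpha_k,\beta_\ell\in[0,1/L^2]$ and reads both estimates off from that expansion. But your central quantitative claim is wrong, and it lands on the right answer only by two compensating errors.

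In the $\omega$-basis the degenerating coordinates are $x_i$ and $x_j$ (both vanish at $\omegaij$ itself). You assert $x_ix_j\asymp\norm{v}^2$ (``essentially the quadratic form restricted to the cusp direction'') and that the terms of $\vol_N$ containing the product $x_ix_j$ dominate. Neither is true. Since $\ip{\omegaij,\omega_i}=\ip{\omegaij,\omega_j}=N-1\neq 0$, the perturbation $v=\omegaij+x_i\omega_i+x_j\omega_j$ satisfies
\[
	\ip{v,v}=2(N-1)(x_i+x_j)+O\bigl((x_i+x_j)^2\bigr),
\]
so the correct relation is $\norm{v}^2\asymp x_i+x_j$: \emph{linear}, not quadratic, in the degenerating coordinates. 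The picture you have in mind (two null coordinates whose product gives the form) would apply to a pair of isotropic directions spanning a hyperbolic plane, but $\omega_i,\omega_j$ are neither isotropic nor orthogonal to $\omegaij$. Correspondingly, in $\vol_N(v)=\sum_m\prod_{l\neq m}x_l$ the dominant contributions are the two terms obtained by \emph{omitting} $i$ or $j$, each $\asymp x_j$ resp.\ $\asymp x_i$, while the $N-1$ terms containing the product $x_ix_j$ are $\leqapprox(x_i+x_j)^2$ and hence negligible; one gets $\vol_N(v)\asymp x_i+x_j\asymp\norm{v}^2$, i.e.\ the right conclusion reached through exactly the terms you discarded, with the asymptotics of the two pieces interchanged. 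If you carry out the computation honestly in your own coordinates you will recover a correct proof, and it will then essentially coincide with the paper's (which checks that the polynomial $\vol_N$ in $\alpha_\bullet,\beta_\bullet$ has vanishing constant term and nonvanishing linear term). The same linear-versus-quadratic point is what yields $\ip{v,\omegaij}\asymp\norm{v}^2$ and hence ${\rm Ht}_{ij}(v)^{-1}\asymp\norm{v}$, so your first estimate is fine once phrased this way. A minor further point: no element of $\Stab_W(\omegaij)$ is needed to bring $a$ into $\ov{\cA}$, since $\cF\subset\ov{\cA}$ by definition, so that bookkeeping worry can be dropped.
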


\subsubsection{Proof of \autoref{thm:asymptotics_of_volume_in_the_cusp_on_hyperboloid} assuming \autoref{thm:asymptotics_of_volume_in_the_cusp_on_hyperplane}}
	\label{sssec:proof_of_thm:asymptotics_of_volume_in_the_cusp_on_hyperboloid_assuming_thm:asymptotics_of_volume_in_the_cusp_on_hyperplane}
Fix once and for all $a_0,K$ (so we will omit dependence of constants on these choices) and suppose that $a$ is normalized so that $\ip{a_0,a}=K$ and then set $\wtilde{a}:=a/\norm{a}$.
Then we have
\begin{align*}
	\vol_k(\wtilde{a}) & =
	\vol_k\left(\frac{a}{\norm{a}}\right)
	= \norm{a}^{-k}\vol_k(a) \asymp_L \norm{a}^{-k}
	\begin{cases}
		1, &\text{ if }k=1;\\
		\norm{a}^2, &\text{ if }k=N.
	\end{cases}
\end{align*}
\autoref{thm:asymptotics_of_volume_in_the_cusp_on_hyperplane} also gives us
\begin{align*}
	\norm{a}\asymp_L {\rm Ht}_{ij}(\wtilde{a})^{-1}
\end{align*}
since ${\rm Ht}_{ij}(a)={\rm Ht}_{ij}(\wtilde{a})$.
Combined with the other conclusion of \autoref{thm:asymptotics_of_volume_in_the_cusp_on_hyperplane} we deduce \autoref{thm:asymptotics_of_volume_in_the_cusp_on_hyperboloid}.
\hfill \qed

\subsubsection{The structure of the horoball}
	\label{sssec:the_structure_of_the_horoball}
To proceed, we need a more detailed description of the horoball.
From \autoref{sssec:the_structure_at_the_cusp}, we know that the ``link'' of $\cF$ at the ray spanned by $\omegaij$ is generated by the rays of the vectors $\omega_{\widehat{ik}},\omega_{\widehat{\ell,j}}$ over all $k,\ell\in [N]\setminus \{i,j\}$.
The condition that $\omegaij+\lambda \omega_{\widehat{ik}} $ belongs to the horoball $\cF_{ij}(L)$, for $\lambda\geq 0$, translates to:
\[
	L\ip{\omegaij,\omegaij + \lambda \omega_{\widehat{ik}}} \leq \norm{\omegaij+\lambda \omega_{\widehat{ik}}}
\]
which leads to $\lambda \leqapprox \frac1{L^2}$ for a uniform constant.

\subsubsection{Proof of \autoref{thm:asymptotics_of_volume_in_the_cusp_on_hyperplane}}
	\label{sssec:proof_of_thm:asymptotics_of_volume_in_the_cusp_on_hyperplane}
Instead of normalizing $a$ to $\ip{a_0,a}=K$, it will be more convenient to take $a$ of the form
\[
	a = \omegaij
	+ \sum_{k} \alpha_k \omega_{\widehat{ik}}
	+ \sum_{\ell} \beta_{\ell} \omega_{\widehat{lj}}
\]
with $\alpha_k, \beta_{\ell}\in [0, 1/L^2]$.
Indeed, once $L$ is sufficiently large, $\ip{a_0,a}$ stays uniformly bounded above and away from zero and rescaling it accordingly will only affect the constants, not the conclusions.
Note that the above representation of $a$ is not unique, but we will establish the estimates up to some combinatorial multiplicative factors that will not affect the choice of such a representation.

Next, once $L$ is sufficiently large we find that
\[
	\norm{a}^2 \asymp \sum_k \alpha_k + \sum_{\ell} \beta_{\ell}
\]
since the $\omega_{\widehat{pq}}$ are isotropic but have nonvanishing, strictly positive pairwise inner products, and furthermore the terms quadratic in $\alpha_k,\beta_{\ell}$ are dominated by the linear terms.

Similarly, $\ip{a,\omegaij}\asymp \sum_k \alpha_k +\sum_{\ell} \beta_{\ell}$ from which we conclude that
\[
	{\rm Ht}_{ij}(a)^{-1} = \frac{\ip{a,\omegaij}}{\norm{a}} \asymp \norm{a}
\]
which was the first claim to be proved.

For the volume function, let us note that for $k=1$ we have more generally a bound on all of $\cF^{a_0,K}$, not just in the horoball.
Indeed $\cF^{a_0,K}$ is compact, and the continuous nonnegative function $\vol_k$ does not vanish anywhere on it.
By compactness it is uniformly bounded above and away from zero.
It remains to analyze $\vol_N(a)$.

It is clear that $\vol_N(a)$ is a polynomial $f(\alpha_\bullet,\beta_\bullet)$, which is furthermore symmetric in each set of variables.
Let us note that the constant term vanishes since $\vol_N(\omega_{\widehat{ij}})=0$.
Let us also note that the linear term doesn't vanish, since if we set all but one of the variables to zero we find that
\[
	\vol_N(\omega_{\widehat{ij}} + \alpha_0\omega_{\widehat{i0}}) = \alpha_0 + O\left(\alpha_0^2\right).
\]
We conclude that for $L$ large enough we have
\[
	\vol_N(a) \asymp \sum_k \alpha_k + \sum_{\ell} \beta_{\ell}
\]
as desired.
\hfill \qed

\noindent
To put all the cusps together, we will use the following basic estimate:

\begin{proposition}[Height and distance to basepoint]
	\label{prop:height_and_distance_to_basepoint}
	There exists $L_N>0$ such that if $L\geq L_N$, given any $a_0\in \cF^1$ we have for any $a\in \cF^1_{ij}(L)$ the estimate:
	\begin{equation}\label{asintoto}
		e^{\dist_{\rm hyp}(a_0,a)}
		\asymp_{a_0}
		{\rm Ht}_{ij}(a)
	\end{equation}
	for an appropriately normalized hyperbolic distance $\dist_{\rm hyp}$ on $\cH^1$.
\end{proposition}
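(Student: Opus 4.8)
The plan is to establish \eqref{asintoto} by directly comparing the hyperbolic distance with the logarithm of the height function in the explicit upper half-space (or rather, horoball) coordinates near the cusp $\omegaij$. First I would recall the standard fact that for a parabolic fixed point $\xi$ of a geometrically finite group, with an associated Busemann/height function $b_\xi$, the hyperbolic distance between two points satisfies $\dist_{\rm hyp}(x,y) \asymp |b_\xi(x) - b_\xi(y)|$ whenever both points lie deep enough inside a horoball based at $\xi$ \emph{and} their shadows on the horosphere are comparable. In our setting, the height function ${\rm Ht}_{ij}$ is essentially the exponential of such a Busemann function: indeed, writing $v = e^{-t}p_- + e^{t}\omegaij$ as in the reparametrization of \autoref{sssec:reparametrizations} (with $\omegaij$ playing the role of $p$), one computes $\ip{v,\omegaij} = e^{-t}\ip{p_-,\omegaij}$ and $\norm{v} \asymp 1$ on the hyperboloid, so ${\rm Ht}_{ij}(v) \asymp e^{t}$, i.e. $\log {\rm Ht}_{ij}$ measures the Busemann coordinate along the geodesic ray into the cusp.

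Concretely, I would argue as follows. Fix $a_0 \in \cF^1$ and let $a \in \cF^1_{ij}(L)$ with $L \geq L_N$. Using \autoref{thm:asymptotics_of_volume_in_the_cusp_on_hyperplane} and the structure of the horoball from \autoref{sssec:the_structure_of_the_horoball}, the point $a$ (after rescaling to the hyperboloid) can be written as $a \asymp \omegaij + \sum_k \alpha_k \omega_{\widehat{ik}} + \sum_\ell \beta_\ell \omega_{\widehat{lj}}$ with $\alpha_k,\beta_\ell \in [0,1/L^2]$, and ${\rm Ht}_{ij}(a)^{-1} \asymp \norm{\cdot}^2 \asymp \sum_k\alpha_k + \sum_\ell \beta_\ell$ before renormalization. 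The geodesic ray from a fixed point $b_0$ on the boundary horosphere $\partial{\rm Hor}_{ij}(L_N)$ straight into $\omegaij$ has the property that at hyperbolic time $\tau$ its height is $\asymp L_N e^{\tau}$; since the ``shadow'' coordinates $(\alpha_\bullet,\beta_\bullet)$ of $a$ are bounded (they lie in a fixed compact piece of the horosphere, namely $\cF$ intersected with the link of $\omegaij$), the geodesic from $a_0$ to $a$ first travels a bounded distance to reach the horoball $\partial{\rm Hor}_{ij}(L_N)$, then travels essentially radially, so $\dist_{\rm hyp}(a_0,a) = \dist_{\rm hyp}(a_0, b_0) + \tau + O(1)$ where $e^{\tau} \asymp {\rm Ht}_{ij}(a)$. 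This is exactly \eqref{asintoto}.

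The cleanest way to make the radial comparison rigorous is to use the standard Fermi/horospherical coordinate formula for hyperbolic distance: in coordinates where the horosphere at height $h$ carries the rescaled Euclidean metric, one has
\[
\cosh \dist_{\rm hyp}(x,y) = \cosh(u_x - u_y) + \tfrac12 e^{-u_x - u_y}\, |v_x - v_y|^2,
\]
with $u = \log h$ the height coordinate and $v$ the horospherical coordinate. Applied with $x = a$ (height $\asymp {\rm Ht}_{ij}(a)$, large) and $y = a_0$ (height $\asymp 1$, and horospherical coordinate at bounded distance, since projecting $a_0$ radially to the boundary of the horoball lands in a compact set), the first term dominates and gives $\dist_{\rm hyp}(a_0,a) = \log {\rm Ht}_{ij}(a) + O(1)$, which is \eqref{asintoto} after exponentiating. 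The dependence of the implied constant on $a_0$ enters through $\dist_{\rm hyp}(a_0, \partial{\rm Hor}_{ij}(L_N))$ and the size of the horospherical coordinate of (the radial projection of) $a_0$.

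The main obstacle I anticipate is purely bookkeeping: one must verify that the horospherical coordinate of $a$ (i.e.\ the transverse position inside the horoball) stays in a \emph{uniformly} bounded region independent of how deep $a$ goes into the cusp. This is precisely what the bound $\alpha_k,\beta_\ell \in [0,1/L^2]$ from \autoref{sssec:the_structure_of_the_horoball} provides: the fundamental domain $\cF$ truncated by the horoball has bounded ``width'' transverse to the cusp direction, so all the excursion is in the radial direction and none is wasted moving sideways. Once that uniform transversality is in hand, the distance estimate is just the coordinate formula above, and the only care needed is tracking that the normalization to the hyperboloid versus to a hyperplane $\ip{\cdot,a_0}=K$ changes everything by bounded multiplicative factors, exactly as in the proof of \autoref{thm:asymptotics_of_volume_in_the_cusp_on_hyperboloid}.
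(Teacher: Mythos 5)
Your proof is correct and follows essentially the same route as the paper: pass to upper half-space/horospherical coordinates at the cusp, use the horoball structure of $\cF$ to see that the transverse coordinate stays in a compact set, and conclude $\dist_{\rm hyp}(a_0,a)=\log(\text{height coordinate})+O_{a_0}(1)$. The only cosmetic difference is that you identify ${\rm Ht}_{ij}$ with the exponential of the Busemann coordinate by a direct computation along the radial geodesic $e^{-t}p_-+e^t\omegaij$ (plus the explicit $\cosh$ distance formula), whereas the paper establishes ${\rm Ht}_{ij}=c_1y^{c_2}$ via a Lie-theoretic argument about unipotent stabilizers and then rescales the metric; both are fine.
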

\begin{proof}
	The identity is checked in the \Poincare upper half-space model which identifies $\cH^1$ with $(x_1,\ldots,x_{N-1},y)\in \bR^{N}$ and $y>0$, such that the cusp $\omegaij$ goes to $\infty$.
	
	Equip first the \Poincare model with the Riemannian metric $\frac{dy^2 + \sum_i dx_i^2}{y^2}$.
	The cusp $\cF^{1}_{ij}(L)$ gets mapped to a set of the form $F:=D\times \{y\geq \ell\}$ with $D\subset \bR^{N-1}$ a compact set.
	In this model, the hyperbolic distance from $a_0$ to a point $x\in F$ satisfies $\dist_{{\rm hyp}}(a_0,x)= \log y + O_{a_0}(1)$.
	As we will see below, we also have that ${\rm Ht}_{ij}=c_1 y^{c_2}$ for some constants $c_1,c_2>0$, so if we rescale the hyperbolic metric we find that
	\[
		{\rm Ht}_{ij}(a)\asymp_{a_0}e^{\dist_{{\rm hyp}'}(a_0, a)} \quad \text{for }a\in \cF^{1}_{ij}(L)
	\]
	as desired.

	It remains to justify that ${\rm Ht}_{ij}=c_1 y^{c_2}$.
	To verify it, two approaches are possible.
	One would be to go through the transition maps between the different models of hyperbolic space and check the identification, which would be quite tedious and require references to several points in the literature; this turns out to give that $c_2=c_1=1$.

	To verify just the existence of $c_1,c_2$, one can however proceed using Lie theory.
	Namely, the stabilizer of an isotropic vector in $\bR^{1,N}$ is a unipotent subgroup of $\SO_{1,N}(\bR)$ and clearly both $y$ and ${\rm Ht}_{ij}$ are constant on its orbits, so the level sets of the two functions agree.
	We also use the $1$-parameter subgroup $g_t\in \SO_{1,N}(\bR)$ with entries $e^t,e^{-t}$ and $1$, and expanding by a factor of $e^t$ the isotropic vector $\hat{\omega}_{ij}$.
	It gives a hyperbolic geodesic through the basepoint $a_0$ going at constant speed to the point at infinity $\hat{\omega}_{ij}$.
	We have that $y(g_t \cdot a_0)/y(a_0)= e^{\lambda_1 \cdot t}$ and ${\rm Ht}_{ij}(g_t \cdot a_0)/{\rm Ht}_{ij}(a_0)=e^{t}$ so the desired claim follows.
\end{proof}

\begin{proposition}[Distance to basepoint controls volume]
	\label{prop:distance_to_basepoint_controls_volume}
	Fix a basepoint $a_0\in \cM^{cc}:=\cC\cH^1/W$ and let $\phi(a):=\dist_{\rm hyp}(a_0,a)$ be defined on $\cM^{cc}$ and lifted to the cover $\cC\cH^1$.

	Then for any $a\in \cC\cH^1$ and $k\in [N]$ we have that
	\[
		\vol_k(a) \asymp_{a_0} \begin{cases}
			e^{\phi(a)} &\text{ if }k=1;\\
			e^{(N-2)\cdot\phi(a)} & \text{ if }k=N.
		\end{cases}
	\]
\end{proposition}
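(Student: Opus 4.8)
The plan is to combine the cusp asymptotics of \autoref{thm:asymptotics_of_volume_in_the_cusp_on_hyperboloid} with the comparison between height and hyperbolic distance from \autoref{prop:height_and_distance_to_basepoint}, using a standard thick--thin decomposition of the convex core. Fix $a_0\in \cC\cH^1$. The convex core $\cM^{cc}=\cC\cH^1/W$ decomposes as a compact ``thick'' part $\cM^{cc}_{\mathrm{thick}}$ together with finitely many cusp neighborhoods, one for each $W$-orbit of parabolic vector $\omegaij$; concretely we may take $\cM^{cc}_{\mathrm{thin}}$ to be the union of the images of the horoball pieces $\cF^1_{ij}(L)$ for $L\geq L_N$ as in \autoref{thm:asymptotics_of_volume_in_the_cusp_on_hyperboloid}, and $\cM^{cc}_{\mathrm{thick}}$ its complement, which is compact since $W$ is geometrically finite (\autoref{sssec:geometric_finiteness_and_recurrence}).

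First I would treat the thick part. On $\cM^{cc}_{\mathrm{thick}}$ the distance function $\phi$ is bounded (by compactness), so $e^{\phi(a)}\asymp_{a_0} 1$ and likewise $e^{(N-2)\phi(a)}\asymp_{a_0} 1$. On the other hand, the functions $\vol_k$ are continuous, $W$-invariant on $\cT$, and strictly positive on $\ov{\cC}\cap\cH^1$ away from the parabolic rays; a lift of $\cM^{cc}_{\mathrm{thick}}$ can be chosen inside the $W$-translates of a compact subset of $\cF^1$ bounded away from those rays, so $\vol_k(a)\asymp_{a_0} 1$ there as well. By $W$-invariance of both sides this gives the estimate for all $a$ in the preimage of the thick part.

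Next I would handle the cusp neighborhoods. Given $a$ in the preimage of $\cM^{cc}_{\mathrm{thin}}$, act by an element of $W$ so that $a\in \cF^1_{ij}(L)$ for one of the finitely many $\omegaij$; both $\vol_k(a)$ and $\phi(a)$ (the latter because $a_0$ lies in the thick part and may be connected to $a$ along a path that exits through the collar) are unchanged, up to additive $O_{a_0}(1)$ for $\phi$, under this normalization. Inside $\cF^1_{ij}(L)$, \autoref{thm:asymptotics_of_volume_in_the_cusp_on_hyperboloid} gives $\vol_1(a)\asymp_L {\rm Ht}_{ij}(a)$ and $\vol_N(a)\asymp_L {\rm Ht}_{ij}(a)^{N-2}$, while \autoref{prop:height_and_distance_to_basepoint} gives ${\rm Ht}_{ij}(a)\asymp_{a_0} e^{\dist_{\rm hyp}(a_0,a)}=e^{\phi(a)}$ (using that the hyperbolic distance on $\cC\cH^1$ agrees with that computed in $\cM^{cc}$ up to the choice of lift, since $a_0$ is in the thick part and the projection $\cC\cH^1\to\cM^{cc}$ is a local isometry). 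Combining the two yields $\vol_1(a)\asymp_{a_0} e^{\phi(a)}$ and $\vol_N(a)\asymp_{a_0}e^{(N-2)\phi(a)}$ on the thin part. Since there are only finitely many cusp orbits and finitely many $L_N$ to compare, the implied constants can be taken uniform, and patching with the thick-part estimate on the (compact) overlap region completes the proof.

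The main obstacle I expect is bookkeeping around the normalization by $W$: one must be careful that replacing $a$ by $w\cdot a$ preserves $\phi(a)=\dist_{\rm hyp}(a_0, \cdot)$ as a function on the \emph{quotient} $\cM^{cc}$ and that the lift used in \autoref{prop:height_and_distance_to_basepoint} (where $a_0\in \cF^1$) is compatible with the lift of $\phi$ to $\cC\cH^1$ used in the statement here --- i.e. that $\dist_{\rm hyp}(a_0,a)$ computed upstairs and $\dist_{\rm hyp}(\overline{a_0},\overline a)$ computed in $\cM^{cc}$ differ only by a bounded amount, which holds because $a_0$ is in the thick part and any short geodesic loop through a thin region contributes a controlled amount. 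Once this is pinned down, each individual estimate is an immediate citation of the cusp theorems above.
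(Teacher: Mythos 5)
Your proposal is correct and follows essentially the same route as the paper: decompose the fundamental domain $\cF^1$ into the cusp pieces $\cF^1_{ij}(L)$ and a precompact complement, handle the compact part by continuity and positivity of $\vol_k$, and in each cusp combine \autoref{thm:asymptotics_of_volume_in_the_cusp_on_hyperboloid} with the height--distance comparison of \autoref{prop:height_and_distance_to_basepoint}. The extra bookkeeping you flag about $W$-normalization and compatibility of lifts is sound and is simply left implicit in the paper's (very brief) proof.
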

\begin{proof}
	We work in the fundamental domain $\cF^1\subset \cC\cH^1$ for the $W$-action.
	For any $L>0$, $\cF^1$ decomposes into the cusp neighborhoods $\cF_{ij}^1(L)$ and their complement which is (pre)compact.
	On a compact set the exponential of any continuous function is comparable to that of any other, while in each $\cF_{ij}^1(L)$ we have that ${\rm Ht}_{ij}(\bullet)\asymp \exp\big(\dist_{\rm hyp}(a_0,\bullet)\big)$, something that again can be seen in the upper half-space model.
	The claim then follows from \autoref{thm:asymptotics_of_volume_in_the_cusp_on_hyperboloid}.
\end{proof}

\begin{corollary}[Boundedness of volumes on $\cF^1$]
	\label{cor:boundedness_of_volumes_on_F1}
	For any $k\leq N$ we have that $\vol_k$ restricted to $W\cdot \cF^1=\cC\cH^1$ is uniformly bounded away from zero.

	Furthermore, suppose $\omega$ is any geodesic ray starting at $a_0$ and staying in $\cC\cH^1$, parametrized isometrically as $\eta_t$ with $\eta_0=a_0$ and $t\geq 0$.
	Then we have that
	\[
		e^{t}\cdot e^{-\phi(\eta_t)}\geqapprox_{a_0} 1.
	\]
	In the parametrization where $a_s=sa+p$ and $s^{1/2}\asymp e^{-t}$ the last inequality is equivalent to
	\[
		\vol_1(a_s) \leqapprox 1.
	\]
\end{corollary}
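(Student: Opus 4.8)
The plan is to prove the two assertions in turn. For the lower bound on $\vol_k$ over $\cC\cH^1$: since each $\vol_k$ is $W$-invariant and $\cC\cH^1=W\cdot\cF^1$, it suffices to bound $\vol_k$ from below on the fundamental domain $\cF^1=\ov{\cF}\cap\cH^1$, which (as in the proof of \autoref{prop:distance_to_basepoint_controls_volume}) I would split into the precompact piece $\cF^1\setminus\bigcup_{\{i,j\}}\cF^1_{ij}(L_N)$ and the cusp neighbourhoods $\cF^1_{ij}(L_N)$. The one point to check on the precompact piece is that $\vol_k$ never vanishes on $\cF^1$: for $v=\sum_\ell x_\ell\omega_\ell\in\ov{\cA}$ one has
\[
\ip{v,v}=\left(\sum_\ell x_\ell\right)^2-(N-1)\sum_\ell x_\ell^2,
\]
so if two of the $x_\ell$ vanished then the Cauchy--Schwarz inequality (applied to the remaining $N-1$ coordinates) would give $\ip{v,v}\le 0$, impossible on $\cH^1$; hence at least $N\ge k$ of the coordinates are strictly positive and $\vol_k(v)>0$. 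As $\vol_k$ is continuous, it is then bounded below by a positive constant on the precompact piece.

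On a cusp neighbourhood $\cF^1_{ij}(L_N)$ I would use \autoref{thm:asymptotics_of_volume_in_the_cusp_on_hyperboloid}: for $k=1$ it yields $\vol_1(v)\asymp_{L_N}{\rm Ht}_{ij}(v)\ge L_N$ and for $k=N$ it yields $\vol_N(v)\asymp_{L_N}{\rm Ht}_{ij}(v)^{N-2}\ge L_N^{N-2}$, both $\geqapprox 1$ once $L_N\ge 1$. For the range $2\le k\le N-1$ I would iterate the elementary inequality $\vol_j(v)\vol_1(v)\geqapprox\vol_{j+1}(v)$ (recorded in the proof of \autoref{prop:reduction_of_volume}) to get $\vol_k(v)\vol_1(v)^{N-k}\geqapprox\vol_N(v)$ and hence
\[
\vol_k(v)\geqapprox\frac{\vol_N(v)}{\vol_1(v)^{N-k}}\asymp_{L_N}{\rm Ht}_{ij}(v)^{k-2}\ge L_N^{k-2}\geqapprox 1;
\]
alternatively one can read a lower bound off the description of the horoball in \autoref{sssec:the_structure_of_the_horoball}, where the coordinates $x_\ell$ with $\ell\notin\{i,j\}$ are bounded below, so $\vol_k(v)\ge\prod_{\ell\in I}x_\ell\geqapprox 1$ for any size-$k$ subset $I\subseteq[N]\setminus\{i,j\}$. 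Combining the two estimates gives $\vol_k\geqapprox 1$ on $\cF^1$, hence on $\cC\cH^1$.

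For the geodesic estimate, the inequality $e^{t}e^{-\phi(\omega_t)}\geqapprox 1$ is just the statement $\phi(\omega_t)\le t$: writing $\pi\colon\cC\cH^1\to\cM^{cc}$ for the quotient map (a local isometry) and using that the segment $\omega|_{[0,t]}$ lies in $\cC\cH^1$ by hypothesis, its image $\pi\circ\omega|_{[0,t]}$ is a path of length $t$ from $\pi(a_0)$ to $\pi(\omega_t)$, so $\phi(\omega_t)=\dist_{\rm hyp}(a_0,\pi(\omega_t))\le t$. To match this with $\vol_1(a_s)\leqapprox 1$, I would recall from \autoref{sssec:reparametrizations} that $\norm{a_s}=(s(1+s))^{1/2}\asymp s^{1/2}\asymp e^{-t}$ and $a_s/\norm{a_s}=\omega_t$ (and $a_s=sa+p\in\ov{\cC\cH}$, so $\omega_t\in\cC\cH^1$), whence by $1$-homogeneity of $\vol_1$ together with \autoref{prop:distance_to_basepoint_controls_volume},
\[
\vol_1(a_s)=\norm{a_s}\,\vol_1(\omega_t)\asymp_{a_0}e^{-t}e^{\phi(\omega_t)}=e^{\phi(\omega_t)-t},
\]
and the right-hand side is $\leqapprox 1$ exactly when $\phi(\omega_t)\le t+O(1)$.

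I do not expect a serious obstacle. The two points that need a little care are the Cauchy--Schwarz observation, which is what guarantees $\vol_k$ is nonvanishing on the \emph{non-compact} domain $\cF^1$ so that compactness can be invoked, and the fact that $\vol_N$ genuinely decays like ${\rm Ht}_{ij}^{N-2}$ along a deep cusp excursion but is still bounded below there precisely because ${\rm Ht}_{ij}\ge L_N$ on $\cF^1_{ij}(L_N)$.
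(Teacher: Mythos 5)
Your proof is correct and follows essentially the same route as the paper, which disposes of the first claim by citing \autoref{prop:distance_to_basepoint_controls_volume} (whose own proof contains exactly the thick part/cusp decomposition you spell out) and of the second by the same observation that $\phi(\omega_t)\le t+O_{a_0}(1)$ since the geodesic segment from $a_0$ to $\omega_t$ has length $t$. Your only substantive additions are welcome ones that the paper leaves implicit: the Cauchy--Schwarz argument showing that at most one coordinate of a point of $\ov{\cA}\cap\cH^1$ can vanish (hence $\vol_k>0$ on the compact part), and the interpolation $\vol_k\cdot\vol_1^{N-k}\geqapprox\vol_N$ covering $2\le k\le N-1$, which is needed because the cited proposition only records the asymptotics for $k=1$ and $k=N$.
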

\begin{proof}
	The first claim follows immediately from \autoref{prop:distance_to_basepoint_controls_volume}.
	The second is also geometrically clear, since it is equivalent to saying that $t-\phi(\eta_t)\geq -C_{a_0}$.
\end{proof}



\subsection{Gluing geodesics}
	\label{ssec:gluing_geodesics}

We establish below several elementary estimates on geodesics in hyperbolic space, which will later allow us to construct recurrent boundary points with prescribed behavior.
It will be useful to alternate between the ball and \Poincare upper half-space model of hyperbolic space.

The properties are standard, but we include them for completeness.
In this section, we will work with explicit forms of the hyperbolic metric and since all our statements are invariant under one initial rescaling of the hyperbolic metric, the results apply to the normalization in \autoref{prop:height_and_distance_to_basepoint}.

\subsubsection{Standard cusp excursion}
	\label{sssec:standard_cusp_excursion}
Suppose $\bH^N=\{(x_1,\dots x_{N-1},y)\colon y>0\}$ is the upper half-space model of hyperbolic space, and $w\colon \bH^N\to \bH^N$ is an isometry fixing $\infty$ of the form $x \mapsto x + \tau_w$ with $\tau_w\in \bR^{N-1}\setminus \{0\}$.
Note that scalings and elements of $\Orthog_{N-1}(\bR)$ are also isometries of $\bH^N$, so by changing coordinates we can normalize $\tau_{w}$ to any other nonzero vector.

For $x_0\in \bH^N$, a \emph{standard cusp excursion} associated to $x_0$ and $w$ is the geodesic segment connecting $x_0$ and $w^k x_0$ for any $k\in \bZ$.
When $w$ is part of a larger discrete group of isometries $W$ of $\bH^N$, we will use the same terminology for the image in the quotient $\bH^N/W$.
For a geodesic segment $\gamma$, we will denote by $s_{\gamma},e_{\gamma}$ its starting and ending points respectively, and by $ts_{\gamma},te_{\gamma}$ its tangent vectors at those points.

For the next statement, recall from \autoref{sec:notation_and_the_basic_objects} that $\cH^1$ denoted the hyperboloid of unit norm vectors and the convex hull of the limit set is $\cC\cH$.
We will denote by $\cM^{cc}:=\cC\cH^1/W$ and $\cM:=\cH^1/W$ the associated quotients, so that $\cM$ is a complete hyperbolic manifold and $\cM^{cc}$ is its convex core.
When $N=2$ we have $\cC\cH=\cH$.
For a point $x\in \cM$ we will denote by $T_x^1\cM$ the unit norm tangent vectors at $x$.

\begin{proposition}[Can go into the cusp anytime]
	\label{prop:can_go_into_the_cusp_anytime}
	Suppose $N=2$.
	Given any $x_0\in \cM^{cc}$, there exist $L_0>0,\ve_0>0$ with the following properties.
	Given any nonempty open neighborhood $U\subset T^1_{x_0}\cM^{cc}$ of size less than $\ve_0$ and any $L\geq L_0$, there exists $v\in U$ and a standard cusp excursion $\gamma$, with the following properties:
	\begin{enumerate}
		\item It starts at $x_0$ in the direction $v$, i.e. $s_{\gamma}=x_0,ts_{\gamma}=v$.
		\item It has length $L+O_{x_0}(1)$.
		\label{item:length_plus_O1}
		\item The endpoint is also $x_0$, but the tangent vector is near $-v$, i.e. $e_{\gamma}=x_0$ and $\dist(-v,te_{\gamma})\leq \ve_0$.
	\end{enumerate}
\end{proposition}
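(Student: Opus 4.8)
The plan is to work entirely in the upper half-plane model $\bH^2 = \{(x,y): y>0\}$, with the parabolic element $w$ normalized to the horizontal translation $x\mapsto x+1$, and to exploit the fact that a standard cusp excursion from $x_0$ to $w^k x_0$ is the Euclidean half-circle through $x_0=(x_*,y_*)$ and $(x_*+k, y_*)$. Since the cusp is part of the finite-volume surface $\cM = \bH^2/W$ (here $\cC\cH = \cH$ because $W$ is a lattice when $N=2$), it suffices to produce the excursion upstairs in $\bH^2$ and then project: the endpoint and tangent vector project correctly because $w$ is an element of $W$, so $w^k x_0$ and $x_0$ have the same image in $\cM$.

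First I would fix $x_0$ and choose $L_0$ large enough that the horoball $\{y\ge \ell\}$ (with $\ell$ depending on $L$) is genuinely embedded in $\cM$ near the chosen cusp, and $\varepsilon_0$ small enough that a ball of that size in $T^1_{x_0}\cM^{cc}$ lifts injectively. The geodesic from $x_0$ to $w^k x_0$ is the half-circle $C_k$ of Euclidean diameter $\sqrt{k^2 + (\text{vertical drop terms})}$; for large $k$ its radius is $\approx k/2$ and its apex has height $\approx k/2$. The hyperbolic length of the sub-arc of $C_k$ between the two endpoints (both at height $y_*$) is $2\log(k) + O_{x_0}(1)$ by a direct computation of $\int ds/y$ along a half-circle. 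So by choosing $k = k(L)$ to be the integer nearest $e^{L/2}$ (up to bounded factors) we get length $L + O_{x_0}(1)$, which gives item (2). The starting tangent vector $v = ts_\gamma$ points "up and over" toward the apex; as $k\to\infty$ this direction converges (in $T^1_{x_0}\bH^2$) to the vertical direction pointing straight up toward the cusp. Symmetrically, the terminal tangent vector $te_\gamma$ at $w^k x_0$, transported back by $w^{-k}$ to $x_0$, converges to the vertical direction pointing straight down; hence $\dist(-v, te_\gamma)\to 0$. Choosing $L_0$ large (equivalently $k$ large) makes both $v$ lie in the prescribed neighborhood $U$ — provided $U$ is centered near the vertical — and makes $\dist(-v,te_\gamma)\le \varepsilon_0$, giving items (1) and (3).

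The one point requiring care — and the main obstacle — is the quantifier structure: $U$ is an \emph{arbitrary} open neighborhood in $T^1_{x_0}\cM^{cc}$ of size less than $\varepsilon_0$, not one we get to place near the vertical. The resolution is that we are free to choose \emph{which} cusp (equivalently which parabolic conjugate $w' = g w g^{-1}$, $g\in W$) and which power $k$; as $g$ ranges over $W$ and $k$ over $\bZ$, the initial directions of the standard cusp excursions $x_0 \to g w^k g^{-1} x_0$ are \emph{dense} in $T^1_{x_0}\cM^{cc}$ — this is exactly the statement that parabolic fixed points are dense in the limit set $\Lambda = \partial\bH^2$ (true since $W$ is a nonelementary lattice), together with the fact that the direction at $x_0$ of the geodesic to a boundary point depends continuously on that point. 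Given $U$, pick a parabolic fixed point $\xi \in \partial\bH^2$ whose geodesic from $x_0$ has initial direction in $U$, conjugate so that $\xi = \infty$, and then run the half-circle argument above; for $k$ large the excursion to $w^k x_0$ has initial direction within $\varepsilon_0$ of the one pointing at $\xi$, hence still in $U$, and terminal direction within $\varepsilon_0$ of $-v$. So the honest statement is: $L_0, \varepsilon_0$ depend only on $x_0$ (chosen so the relevant horoballs embed and so "near the cusp direction" makes sense uniformly), and then for each admissible $U$ one selects an appropriate conjugate of $w$ and a large power $k = k(L)$. I would phrase the writeup so that the density of parabolic directions is invoked first, the half-circle length computation second, and the tangent-vector convergence last, keeping each as a short lemma-free paragraph since all three are standard facts about $\bH^2$.
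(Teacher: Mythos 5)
Your proof is correct and follows essentially the same route as the paper's: use density of parabolic fixed points in the limit set (valid since $W$ is a lattice for $N=2$) to place the cusp direction inside the arbitrary neighborhood $U$, then do the explicit half-circle computation in the upper half-plane for the geodesic from $x_0$ to $w^k x_0$, and observe that the initial and terminal tangent vectors converge to the vertical directions as $k\to\infty$. Your length asymptotic $2\log k + O_{x_0}(1)$ is in fact the correct one (the paper's displayed value $\log|k|+O_{y_0}(1)$ drops a factor of $2$, which is immaterial since one simply takes $k\asymp e^{L/2}$ as you do).
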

\noindent For general $N\geq 2$ an analogous proposition holds, but the neighborhood $U$ would have to contain some points of the limit set as well.
\begin{proof}
	When $N=2$, the limit set is the entire circle at infinity, and furthermore parabolic points are dense in the limit set (apply \cite[12.1.3]{Ratcliffe2019_Foundations-of-hyperbolic-manifolds} to the closure of the set of parabolic points).
	So for any neighborhood $U$ of $T^1_{x_0}\cM^{cc}$, there exists a $v_0$ such that the geodesic starting at $x_0$ in the direction $v_0$ lands at a parabolic point.
	Thus, there exists an isometry $w_0\in W$ such that, switching to the upper half-space model, we have that $x_0=(0,y_0)\in \bH^2$, that $w_0(x)=x+1$, and the tangent vector $v_0$ points vertically.

	Recall that the hyperbolic geodesic connecting $x_0$ with $x_0+k$ is the arc of the circle of radius $r_k:=\sqrt{(k/2)^2 + y_0^2}$ and center $(k/2,0)$.
	An elementary integral (involving only the integration of rational functions) reveals that the length of the arc is
	\[
		\log\left(r_k + \tfrac{|k|}2\right) - \log \left(r_k - \tfrac{|k|}2\right) =
		\log |k| + O_{y_0}(1)
	\]
	where we assume that $k\neq 0$.
	If $|k|$ is large enough the geometric statements about the starting and ending vectors are directly verified (note that tangent spaces at $x_0$ and $x_0+k$ are identified by the isometry $x\mapsto x+k$).
	We can also choose $k$ so that $L=\log|k| + O_{x_0}(1)$, in fact $L=\log|k| + o_{x_0,L}(1)$.
\end{proof}

We will also need the following estimates for piecewise geodesic segments.

\begin{proposition}[Gluing geodesics]
	\label{prop:gluing_geodesics}
	There exist $L_1>0,\ve_1>0$ with the following properties.
	Let $\alpha,\beta$ be two geodesics in $\bH^N$ of lengths $\ell_\alpha,\ell_\beta>L_1$.
	Suppose that $\alpha$ ends at the starting point of $\beta$, i.e. $e_{\alpha}=s_{\beta}$ and the tangent vectors are near, i.e. $\dist(te_{\alpha},ts_{\beta})<\ve_1$.
	Then the geodesic $\gamma$ connecting $s_{\alpha}$ to $e_{\beta}$ satisfies:
	\begin{enumerate}
		\item The starting vector of $\gamma$ is exponentially close to that of $\alpha$, namely
		\[
			\dist(ts_{\alpha},ts_{\gamma})\leqapprox e^{-\ell_{\alpha}}.
		\]
		\item We have that
		\[
			\ell_{\alpha}+ \ell_{\beta} - O(1) \leq \ell_{\gamma} \leq \ell_{\alpha}+\ell_{\beta}.
		\]
		\item Furthermore, the Hausdorff distance between $\gamma$ and $\alpha\cup \beta$ is also $O(1)$.
	\end{enumerate}
\end{proposition}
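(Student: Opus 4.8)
The plan is to view $\gamma$ as the side $[p,r]$ of the geodesic triangle $\Delta$ with vertices $p:=s_\alpha$, $q:=e_\alpha=s_\beta$, $r:=e_\beta$, whose other two sides are $\alpha=[p,q]$ and $\beta=[q,r]$, and to run the hyperbolic trigonometry of $\Delta$. Since any three points of $\bH^N$ span a totally geodesic copy of $\bH^2$, the triangle $\Delta$ is planar, so the two–dimensional laws of sines and cosines for curvature $-1$ apply to it, and $\bH^N$ is $\delta_0$–hyperbolic with $\delta_0$ equal to the thinness constant of $\bH^2$. The only place the hypothesis enters is the interior angle $\theta$ of $\Delta$ at $q$: the tangent direction from $q$ to $p$ is $-te_\alpha$ and from $q$ to $r$ is $ts_\beta$, so $\theta$ is the angle between $-te_\alpha$ and $ts_\beta$; since $\dist(te_\alpha,ts_\beta)<\ve_1$ the directions $te_\alpha$ and $ts_\beta$ make a small angle, hence $\theta>\pi-\ve_1$ (up to a harmless fixed constant) and $\cos\theta\leq -1+\eta$ with $\eta=\eta(\ve_1)\to 0$ as $\ve_1\to 0$. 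I would first fix $\ve_1$ small enough that $\eta\leq\tfrac12$ (and $\ve_1$ itself small), and only then choose $L_1$ large.

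For part (2), the upper bound $\ell_\gamma\leq\ell_\alpha+\ell_\beta$ is the triangle inequality. For the lower bound I would substitute $\cos\theta\leq-1+\eta$ into the law of cosines $\cosh\ell_\gamma=\cosh\ell_\alpha\cosh\ell_\beta-\sinh\ell_\alpha\sinh\ell_\beta\cos\theta$ and use $\cosh\ell_\alpha\cosh\ell_\beta+\sinh\ell_\alpha\sinh\ell_\beta=\cosh(\ell_\alpha+\ell_\beta)$ together with $\sinh\ell_\alpha\sinh\ell_\beta\leq\cosh(\ell_\alpha+\ell_\beta)$; this gives $\cosh\ell_\gamma\geq(1-\eta)\cosh(\ell_\alpha+\ell_\beta)\geq\tfrac14 e^{\ell_\alpha+\ell_\beta}$, hence $\ell_\gamma\geq\ell_\alpha+\ell_\beta-\log 4$. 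For part (1), note that $ts_\alpha$ and $ts_\gamma$ are both unit tangent vectors at the common point $p=s_\alpha=s_\gamma$, so $\dist(ts_\alpha,ts_\gamma)$ equals the angle $\phi_p$ of $\Delta$ at $p$ (up to a fixed normalization); because a hyperbolic triangle has angle sum $<\pi$ and $\theta>\pi-\ve_1$, we get $\phi_p<\ve_1$, so $\phi_p\asymp\sin\phi_p$. The law of sines then gives $\sin\phi_p=\sin\theta\cdot\tfrac{\sinh\ell_\beta}{\sinh\ell_\gamma}\leq\tfrac{\sinh\ell_\beta}{\sinh\ell_\gamma}$, and feeding in the lower bound on $\ell_\gamma$ from (2) (with $\ell_\alpha,\ell_\beta\geq L_1$ large, so $\sinh$ is comparable to $\tfrac12 e^{(\cdot)}$) yields $\sinh\ell_\gamma\geqapprox e^{\ell_\alpha}\sinh\ell_\beta$ and hence $\dist(ts_\alpha,ts_\gamma)\asymp\phi_p\leqapprox e^{-\ell_\alpha}$.

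For part (3), $\delta_0$–thinness of $\Delta$ gives immediately $\gamma\subset N_{\delta_0}(\alpha\cup\beta)$. For the reverse inclusion, the Gromov product of $p$ and $r$ based at $q$ is $(p\mid r)_q=\tfrac12(\ell_\alpha+\ell_\beta-\ell_\gamma)\leq\tfrac12\log 4$ by (2), and in a $\delta_0$–hyperbolic space one has $\dist(q,[p,r])\leq (p\mid r)_q+4\delta_0$, so the vertex $q$ lies within $O(1)$ of $\gamma$. Then $\alpha=[p,q]$ has one endpoint on $\gamma$ and the other within $O(1)$ of $\gamma$, and since $x\mapsto\dist(x,\gamma)$ is convex along geodesics in the $\mathrm{CAT}(0)$ space $\bH^N$, this forces $\alpha\subset N_{O(1)}(\gamma)$; the same argument applies to $\beta$. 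Combining the two inclusions shows the Hausdorff distance between $\gamma$ and $\alpha\cup\beta$ is $O(1)$.

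All of the above is routine hyperbolic geometry, so I do not expect a substantive obstacle; the only points needing care are the order in which the constants are fixed (first $\ve_1$ small enough to make $\eta\leq\tfrac12$ and $\phi_p$ small, then $L_1$ large enough for the $\sinh$ comparisons) and the remark that planar trigonometry and $\delta$–thinness transfer to $\bH^N$ precisely because geodesic triangles are planar.
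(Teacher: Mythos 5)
Your proof is correct, and it reaches the conclusions by a different computational route than the paper. Both arguments begin with the same reduction — the three points $s_\alpha$, $e_\alpha=s_\beta$, $e_\beta$ span a totally geodesic $\bH^2$, so one may take $N=2$ — but from there the paper works in the ball model with explicit coordinates: it places $s_\alpha$ at the origin, records $1-x\asymp e^{-\ell_\alpha}$ for the Euclidean radius of $e_\alpha$, and observes that every geodesic leaving $e_\alpha$ within angle $\ve_1$ of the continuation of $\alpha$ stays in a visual cone at the origin of angular width $\leqapprox e^{-\ell_\alpha}$; this gives (1) at once, and (2)–(3) are dismissed as ``elementary hyperbolic geometry.'' You instead run the trigonometry of the geodesic triangle: the hyperbolic law of cosines with $\cos\theta\leq -1+\eta$ gives the sharp additive defect $\ell_\gamma\geq\ell_\alpha+\ell_\beta-\log 4$ in (2), the law of sines converts that into the $e^{-\ell_\alpha}$ angle bound in (1), and (3) follows from $\delta$-thinness in one direction and from the Gromov-product bound $(p\mid r)_q\leq\tfrac12\log 4$ plus convexity of the distance-to-$\gamma$ function in the other. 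The order of quantifiers (fix $\ve_1$ first so that $\eta\leq\tfrac12$, then take $L_1$ large for the $\sinh$ comparisons) is handled correctly. What your version buys is explicit constants and a complete, checkable treatment of parts (2) and (3), which the paper leaves implicit; what the paper's version buys is a very short derivation of (1) from a single picture in the ball model. Either argument suffices for the uniform $O(1)$ and exponential bounds that are actually used later in \autoref{prop:arbitrary_cusp_excursions}.
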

\noindent Note that the estimate on the angle between $ts_{\alpha}$ and $ts_{\gamma}$ improves as $\alpha$ gets longer.
This, together with the uniformity of estimates, will allow us to show that certain geodesic rays exist and are approximated by prescribed piecewise geodesic segments.
\begin{proof}
	We work in the ball model, and since the two geodesics $\alpha,\beta$ are contained in a $2$-dimensional hyperbolic subspace, we can assume $N=2$.
	We choose coordinates so that $s_{\alpha}=(0,0)$ and $e_{\alpha}=(x,0)$ with $x\in (0,1)$.
	Again an elementary integral shows that $\ell_{\alpha}=d(s_{\alpha},e_{\alpha})=-\log(1-x)+O(1)$, or equivalently $1-x\asymp e^{-\ell_{\alpha}}$.

	The extreme case of all estimates is when $\beta$ is along the ray starting at $e_\alpha$ at angle $\theta$ with $|\theta|=\ve_1$.
	Such rays are contained in a cone based at $0$ of width $\asymp|\tan\theta|(1-x)\leq e^{-\ell_{\alpha}}$.
	The other claims also follow from elementary hyperbolic geometry and this constraint.
\end{proof}

We can now put the two previous propositions together:

\begin{proposition}[Arbitrary cusp excursions, $N=2$]
	\label{prop:arbitrary_cusp_excursions}
	Suppose $N=2$ and $x_0\in \cM^{cc}$, $L_0>0$ is as in \autoref{prop:can_go_into_the_cusp_anytime}.
	All $O(1)$ bounds in this proposition are assumed to depend on $x_0$.

	Suppose $\ell_i,i\geq 0$ is a sequence of positive reals with $\ell_i>L_0+O(1)$ where the $O(1)$ is again as in \autoref{prop:can_go_into_the_cusp_anytime}\ref{item:length_plus_O1}.
	Then, for any open set $U\subset T^{1}_{x_0}\cM$, there exists an infinite geodesic ray $\gamma$, starting at $x_0$ and with initial tangent vector in $U$, with the following properties:
	\begin{enumerate}
		\item The geodesic ray has a decomposition into geodesic segments
		\[
			\gamma = \coprod_{i\geq 0} [x_i,x_{i+1}) \quad x_i\in \gamma
		\]
		with lengths $\ell_{[x_i,x_{i+1})}=\ell_i+O(1)$ and $\dist(x_0,x_i)\leq O(1)$.
		\item Furthermore $[x_i,x_{i+1})$ is at bounded Hausdorff distance from a standard cusp excursion based at $x_0$, of length $\ell_i + O(1)$.
		\item Let $m_i\in(x_i,x_{i+1})$ denote the midpoint of the segment.
		Then the distance function $\dist_{\cM}(x_0,-)$ restricted to $(x_i,x_{i+1})$ is within $O(1)$ of the function that linearly grows from $0$ at $x_i$ to $\tfrac 12 \ell_i$ at $m_i$, and then linearly decreases back to $0$.
	\end{enumerate}
\end{proposition}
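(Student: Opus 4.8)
The plan is to realize $\gamma$ as a limit of genuine geodesic segments obtained by concatenating a sequence of standard cusp excursions of the prescribed lengths and then straightening, using \autoref{prop:can_go_into_the_cusp_anytime} to produce the individual excursions and \autoref{prop:gluing_geodesics} to control the straightening. First I would build excursions $\gamma_0,\gamma_1,\dots$ in $\cM=\cM^{cc}$ (recall $N=2$) recursively: apply \autoref{prop:can_go_into_the_cusp_anytime} with basepoint $x_0$, a small sub-neighborhood $U_0\subseteq U$ of diameter $<\min(\ve_0,\ve_1)$, and $L=\ell_0$ (valid since $\ell_0>L_0$), getting $\gamma_0$ with $s_{\gamma_0}=e_{\gamma_0}=x_0$, initial direction $v_0\in U_0$, length $\ell_0+O(1)$, and terminal direction within $\ve_0$ of $-v_0$; then, given $\gamma_i$, apply the proposition again with neighborhood a ball of radius $<\tfrac12\min(\ve_0,\ve_1)$ about $te_{\gamma_i}$ and $L=\ell_{i+1}$, getting $\gamma_{i+1}$ based at $x_0$ with $\dist(ts_{\gamma_{i+1}},te_{\gamma_i})<\tfrac12\ve_1$, length $\ell_{i+1}+O(1)$, and terminal direction within $\ve_0$ of $-ts_{\gamma_{i+1}}$. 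The concatenation $\widehat\gamma:=\gamma_0\ast\gamma_1\ast\cdots$ is then a piecewise geodesic all of whose pieces have length $>L_0$ and all of whose turning angles are $<\ve_1$; it lifts to a broken geodesic in $\bH^2$ with corners $c_0=\widetilde x_0,c_1,c_2,\dots$, where $\gamma_i$ lifts to $[c_i,c_{i+1}]$ and each $c_{i+1}\in W\cdot c_i$.

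Next I would straighten and take a limit. Let $\gamma^{(n)}$ be the geodesic from $c_0$ to $c_{n+1}$, so that $\gamma^{(n)}$ is the gluing of $\gamma^{(n-1)}$ and $\gamma_n$ in the sense of \autoref{prop:gluing_geodesics}; the angle hypothesis holds because, by part (1) of that proposition applied with reversed orientation, $te_{\gamma^{(n-1)}}$ is within $O(e^{-\ell_{n-1}})$ of $te_{\gamma_{n-1}}$, which is within $\tfrac12\ve_1$ of $ts_{\gamma_n}$, and $O(e^{-\ell_{n-1}})<\tfrac12\ve_1$ once $L_0$ is large. Iterating, part (2) gives $\ell_{\gamma^{(n)}}\ge\ell_{\gamma^{(n-1)}}+\ell_n-O(1)$, which grows linearly in $n$ once $L_0$ exceeds the implied constant there, and part (1) gives $\dist(ts_{\gamma^{(n)}},ts_{\gamma^{(n-1)}})\leqapprox e^{-\ell_{\gamma^{(n-1)}}}$. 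Hence $ts_{\gamma^{(n)}}\to v_\infty$ for some $v_\infty$ with $\dist(v_\infty,v_0)\leqapprox e^{-\ell_0}$, and (choosing $U_0$ so that this correction keeps $v_\infty$ inside $U$) I let $\gamma$ be the geodesic ray from $x_0$ with $ts_\gamma=v_\infty$. The same contraction shows $\gamma$ and $\gamma^{(n)}$ agree up to $\leqapprox e^{-\ell_{\gamma^{(n)}}}$ on their common length, so the $O(1)$ discrepancies introduced by the successive gluings (part (3)) are geometrically damped instead of accumulated, and $\gamma$ lies within a single $O(1)$ Hausdorff neighborhood of $\widehat\gamma$. (Equivalently: $\widehat\gamma$ is a uniform quasigeodesic and $\gamma$ is the geodesic ray asymptotic to it, so the Morse lemma applies.)

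Finally I would read off (1)--(3) from this fellow-traveling. Define $x_i$ to be the point of $\gamma$ nearest the corner $c_i$; then $\dist_{\cM}(x_0,x_i)=\dist(\widetilde x_i,c_i)=O(1)$, the second assertion of (1). Since $\widehat\gamma$ does not backtrack, the portion of $\gamma$ shadowing the piece $\gamma_i=[c_i,c_{i+1}]$ is precisely $[x_i,x_{i+1})$; so $[x_i,x_{i+1})$ is at Hausdorff distance $O(1)$ from the standard cusp excursion $\gamma_i$ of length $\ell_i+O(1)$, which is (2), and two geodesic segments that are $O(1)$-Hausdorff-close have lengths differing by $O(1)$, so $\ell_{[x_i,x_{i+1})}=\ell_i+O(1)$, completing (1). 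For (3) I would pass to the upper half-space model with the relevant cusp at $\infty$ and $\widetilde x_0=(0,y_0)$: as in the proof of \autoref{prop:can_go_into_the_cusp_anytime}, $\gamma_i$ lifts to a circular arc through $(0,y_0)$ and $(k_i,y_0)$ with $|k_i|\asymp e^{\ell_i}$, along which the Euclidean height $y$ increases monotonically to its maximum $\asymp e^{\ell_i}$ at the apex and then decreases, while $\dist_{\cM}(x_0,\cdot)=\log y+O(1)$ on the arc (the nearest $W$-translate of $\widetilde x_0$ being $c_i$ on the way up and $c_{i+1}$ on the way down); thus $\dist_{\cM}(x_0,\cdot)$ grows at unit speed from $\approx 0$ at $x_i$ to $\approx\tfrac12\ell_i$ at the midpoint $m_i$ and decreases back, and transporting this along the $O(1)$-Hausdorff closeness gives (3).

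The main obstacle is the uniformity in the second step: showing that the straightened segments $\gamma^{(n)}$ converge and that the limiting ray remains in a \emph{single} $O(1)$ Hausdorff neighborhood of $\widehat\gamma$, with constants independent of $n$ and of the sequence $(\ell_i)$. Naively iterating part (3) of \autoref{prop:gluing_geodesics} only yields an $O(n)$ bound; what rescues the argument is the exponential contraction in part (1), which localizes each gluing's $O(1)$ error near the corresponding piece and makes the total geometrically summable — equivalently, one recognizes $\widehat\gamma$ as a uniform quasigeodesic and invokes the Morse lemma. The remaining ingredients (producing the excursions, matching turning angles, arranging $ts_\gamma\in U$, and the explicit upper-half-space computation behind (3)) are routine given \autoref{prop:can_go_into_the_cusp_anytime}, \autoref{prop:gluing_geodesics}, and the cusp picture.
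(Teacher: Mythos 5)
Your proposal is correct and follows essentially the same route as the paper: produce the individual excursions with \autoref{prop:can_go_into_the_cusp_anytime}, glue them inductively with \autoref{prop:gluing_geodesics} so that the initial angles form an exponentially convergent Cauchy sequence defining the limiting ray, and verify the distance profile in the upper half-space model via $\dist_{\cM}(x_0,\cdot)=\log y+O(1)$. Your explicit treatment of why the Hausdorff errors from successive gluings stay $O(1)$ rather than accumulating to $O(n)$ (exponential contraction / Morse lemma) is a point the paper's proof leaves implicit, but it is the same underlying mechanism.
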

\begin{proof}
	It follows from \autoref{prop:can_go_into_the_cusp_anytime} that we can form a piecewise geodesic segment $\gamma=\coprod \gamma_i$ with $\gamma_i$ starting and ending at $x_0$, with most of the listed properties.
	We only need to verify the claim about the distance function to $x_0$ restricted to each $\gamma_i$.
	This can again be verified in the \Poincare model, as was done in the proof of \autoref{prop:can_go_into_the_cusp_anytime}.
	Indeed we need to minimize the distance from $(x,y)$ to the points $(k,y_0)$ over all $k\in \bZ$ (and $y\geq y_0$).
	We see that this distance is $\log(y/y_0)+O(1)=\log(y)+O(1)$ which yields the required claims.

	To deduce that a geodesic ray exists with the same properties, we apply inductively \autoref{prop:gluing_geodesics}.
	First we apply it to the segments $\gamma_0,\gamma_1$ to obtain a geodesic $\gamma_1'$.
	Then we apply it to $\gamma_1'$ and $\gamma_2$ to obtain $\gamma_2'$.
	So at each step we obtain a geodesic segment $\gamma_i'$ connecting $x_0$ to $x_0$ but staying near $\gamma_0\coprod\dots \coprod \gamma_i$, and then glue it to $\gamma_{i+1}$ to obtain $\gamma_{i+1}'$.
	Note that by the estimate on the angle from \autoref{prop:gluing_geodesics}, the angles of $\gamma_i'$ form a Cauchy sequence that converges exponentially fast to an angle, and hence to a geodesic ray $\gamma$ as required.
\end{proof}

\begin{theorem}[Arbitrary cusp excursions]
	\label{thm:arbitrary_cusp_excursions}
	Suppose $N\geq 2$, $x_0\in \cM^{cc}$, and $U\subset T^1_{x_0}\cM$ is an open set containing a $W$-orbit of the limit set $\Lambda_{[2]}$.

	Then there exists $L_1>0$, such that for any sequence $\ell_i,i\geq 0$ with $\ell_i>L_1 + O(1)$, a geodesic ray $\gamma$ satisfying the same conclusions as in \autoref{prop:can_go_into_the_cusp_anytime} exists, with $O(1)$ bounds depending on $x_0$.
\end{theorem}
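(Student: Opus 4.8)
The plan is to bootstrap from the already-proven case $N=2$ (\autoref{prop:arbitrary_cusp_excursions}) by transplanting the cusp-excursion construction onto a totally geodesic copy of $\bH^2$ sitting inside $\bH^N$. The relevant copy is cut out by the conformal embedding $\phi_{2,N}\colon V_2'\to V_N$ of \autoref{sssec:additional_notation_for_the_decomposition}: its image $\{\omega_3,\dots,\omega_N\}^\perp$ is $3$-dimensional of signature $(1,2)$, and $\phi_{2,N}$ is equivariant for $W_{[2]}'\toisom W_{[2]}\subset W$. So I would set $\Pi_0 := [\phi_{2,N}(V_2')]\cap\cH^1$, a totally geodesic $\bH^2$ on which $W_{[2]}$ acts — up to a harmless global rescaling of the metric — exactly as the $N=2$ Wehler reflection group acts on its hyperbolic plane; in that case the group is a lattice, so $\partial\Pi_0=\Lambda_{[2]}$, and since $W_{[2]}\subset W$ we get $\Lambda_{[2]}\subset\Lambda$ and hence $\Pi_0 = \text{Cvx.Hull}(\Lambda_{[2]})\subseteq\text{Cvx.Hull}(\Lambda)=\cC\cH^1$. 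Using the hypothesis, I would then pick $w\in W$ with $w\Lambda_{[2]}\subset U$ and work with the translate $\Pi := w\Pi_0$, so that $\Pi\subset\cC\cH^1$ and $\partial\Pi\subset U$ (identifying directions at $x_0$ with boundary points).

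Next I would deal with the basepoint: $x_0$ need not lie on $\Pi$. Let $x_0^\Pi$ be the nearest point of $\Pi$ to a lift of $x_0$ and $d_0:=\dist_{\rm hyp}(x_0,\Pi)$, a quantity depending only on $x_0$. Applying \autoref{prop:arbitrary_cusp_excursions} to the $N=2$ manifold $\Pi/wW_{[2]}w^{-1}$, with basepoint $x_0^\Pi$ and the given sequence $\ell_i$, produces a geodesic ray $\gamma'\subset\Pi$ with a decomposition $\coprod_i[x_i',x_{i+1}')$ into near-standard cusp excursions of the prescribed lengths, with $\dist(x_0^\Pi,x_i')=O_{x_0}(1)$ and the tent-shaped profiles of $\dist(x_0^\Pi,-)$ on each segment. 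I would then take $\gamma := [\,x_0,\xi_\infty)$ to be the genuine geodesic ray of $\cM$ from $x_0$ asymptotic to $\gamma'$, where $\xi_\infty\in\partial\Pi$ is the endpoint of $\gamma'$. Since $x_0\in\cC\cH^1$ and $\xi_\infty\in\partial\Pi\subset\Lambda$, convexity of $\cC\cH^1$ gives $\gamma\subset\cM^{cc}$, and its initial direction points at $\xi_\infty\in w\Lambda_{[2]}\subset U$. The rays $\gamma$ and $\gamma'$ are asymptotic and issue from points at distance $d_0$, so they track each other within Hausdorff distance $O(d_0)=O_{x_0}(1)$; transporting the decomposition of $\gamma'$ to $\gamma$ along this tracking (taking $x_i\in\gamma$ at matching arclength) should yield the asserted conclusions with $O(1)$-errors depending on $x_0$. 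The two metric points to verify both use that $\Pi$ is totally geodesic in $\bH^N$: a standard cusp excursion based at $x_0^\Pi$ inside $\Pi$ has the form ``geodesic from $x_0^\Pi$ to $v^m x_0^\Pi$'' for a parabolic $v\in wW_{[2]}w^{-1}\subset W$, hence is a standard cusp excursion of $\cM$ as well (and recentering from $x_0^\Pi$ to $x_0$ costs only $O_{x_0}(1)$); and distances in $\cM$ between points of $\Pi$ are computed inside $\Pi$, so the tent-shaped profile of $\dist(x_0^\Pi,-)$ from the $N=2$ picture survives verbatim, again up to $O_{x_0}(1)$. Shrinking $L_1$ absorbs the threshold $\ell_i>L_0+O(1)$ inherited from \autoref{prop:arbitrary_cusp_excursions}.

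I expect the main obstacle to be precisely this reconciliation of an arbitrary $x_0\in\cM^{cc}$ with the totally geodesic plane supplied by $U$. One cannot simply run the iterative construction of \autoref{prop:arbitrary_cusp_excursions} based at $x_0$, because after an excursion into a cusp $p$ the return direction points toward the foot of $x_0$ relative to $p$, which is generically far from the limit set unless $x_0$ already lies on the totally geodesic plane spanned by the parabolic points being used — so there is no nearby cusp to continue into. This is why the hypothesis must ask for a whole $W$-translate of the circle $\Lambda_{[2]}$ inside $U$, and not just a single parabolic direction: it pins down a plane $\Pi$ through whose boundary circle all the excursions can be organised in the $N=2$ fashion, and the residual gap $d_0=\dist(x_0,\Pi)$ is then innocuous, since the statement only promises $O(1)$-bounds depending on $x_0$. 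The remaining verifications — that asymptotic rays track within $O(d_0)$, and that the excursion lengths, the bounds $\dist(x_0,x_i)=O(1)$, and the tent profiles transfer from $\Pi$ to $\cM$ — are routine once the totally geodesic structure is in place, being of the same elementary flavor as \autoref{prop:gluing_geodesics}.
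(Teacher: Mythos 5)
Your proposal is correct and follows essentially the same route as the paper: both arguments locate the totally geodesic copy of $\bH^2$ associated to (a $W$-translate of) $W_{[2]}$, run the $N=2$ construction of \autoref{prop:arbitrary_cusp_excursions} from an auxiliary basepoint on that plane, and then replace the resulting ray by the geodesic from $x_0$ to the same endpoint at infinity, absorbing the discrepancy into $O_{x_0}(1)$ terms. Your write-up is in fact more explicit than the paper's (which dismisses the transfer as ``immediate''), in particular in using the nearest-point projection as the auxiliary basepoint and in checking that cusp excursions and distance profiles computed inside the totally geodesic plane persist in $\cM$.
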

\begin{proof}
	Fix $x_0'\in \cM^{cc}$ to be a point on the totally geodesic boundary of $\cM^{cc}$, corresponding to the subgroup $W_{[2]}$ of $W$.
	Using the identification $T_x^1\cM^{cc}\isom \partial \bH^N$ for $x=x_0,x_0'$, let $U'\subset T_{x_0'}^1 \cM^{cc}$ be the corresponding open set.
	Then \autoref{prop:arbitrary_cusp_excursions} applies to $x_0'$ and the open set $U'$ (intersected with the totally geodesic copy of $\bH^2$) to give a geodesic ray $\gamma'$ starting at $x_0'$ and endpoint $p\in \partial \bH^N$.
	It is now immediate that the corresponding geodesic $\gamma$ starting at $x_0$ and with the same endpoint has the required properties, up to additive $O(1)$-terms depending on $x_0,x_0'$.
\end{proof}



\subsection{Proof of the theorem}
	\label{ssec:proof_of_the_theorem}

Using the estimates established in the previous two sections, we can now proceed to the proof \autoref{thm:volume_of_points}.

\subsubsection{The case of no recurrence}
	\label{sssec:the_case_of_no_recurrence}
Suppose that we are in case \ref{item:pure_divergence}, i.e. $p_r=0$ in the decomposition $p=p_d+p_r$.
Then we compute directly:
\[
	\vol_N(s\cdot a + p_d) \asymp \vol_N(s\omega_0 + \dots
	+ s\omega_{N-|S_d|} + \omega_{N-|S_d|+1}+\cdots + \omega_N)
	\asymp s^{N-|S_d|}
\]
hence giving the claim in this case.

\subsubsection{The case of pure recurrence}
	\label{sssec:the_case_of_pure_recurrence}
Suppose now that we are in \ref{item:pure_recurrence}, i.e. $p_d=0$ in the decomposition.
Recall that $a_s=s\cdot a + p_r$, that $\norm{a_s}\asymp s^{1/2}$ and that we set $\wtilde{a}_s:=a_s/\norm{a_s}$ and a reparametrization $\wtilde{a}_s=\eta_t$ with $s^{1/2}\asymp e^{-t}$, as in \autoref{sssec:reparametrizations}.

Then we have that
\begin{equation}\label{qz}\begin{split}
	\vol_N(a_s) & = \norm{a_s}^N\vol_N\left(\frac{a_s}{\norm{a_s}}\right)
	\asymp s^{N/2}\vol_N\left(\wtilde{a}_s\right)\asymp
	\\
	& \asymp e^{-t\cdot N}\cdot \vol_N(\eta_t)
	\asymp e^{-t\cdot N}\cdot e^{(N-2)\phi(\eta_t)}
\end{split}\end{equation}
by \autoref{prop:distance_to_basepoint_controls_volume} that also defines $\phi$.

Now $\phi\geq 0$ and for a recurrent point we have that $\liminf \phi(\eta_t)\leq O(1)$ so $\liminf \phi(\eta_t)/t=0$.
We thus find:
\begin{align*}
	\limsup_{s\downarrow 0}\frac{\log \vol_N(a_s)}{\log s}
	& = \limsup_{t\to +\infty}\frac{-t\cdot N + (N-2)\phi(\eta_t)}{-2t}\\
	& = \tfrac{N}{2} - \tfrac{N-2}{2}\cdot \liminf_{t\to +\infty}\frac{\phi(\eta_t)}{t} = \tfrac{N}{2}
\end{align*}
which was one half of the claim.
With an analogous calculation we find
\[
	\liminf_{s\downarrow 0 }\frac{\log \vol_N(a_s)}{\log s}
	=
	\tfrac{N}{2} - \tfrac{N-2}{2}\limsup_{t\to +\infty}\frac{\phi(\eta_t)}{t}.
\]
We do have $\phi(\eta_t)-t\leq O(1)$ by \autoref{cor:boundedness_of_volumes_on_F1} so $\limsup \frac{\phi(\eta_t)}{t}\in [0,1]$.
To prove the remaining part of \autoref{thm:volume_of_points}\ref{item:pure_recurrence} it suffices to show that any value in $[0,1]$ can be achieved by an appropriate choice of $p_r$.

To do so, we invoke \autoref{thm:arbitrary_cusp_excursions} with parameters $\ell_i$ to be specified.
We obtain a geodesic ray $\gamma$ decomposed into subsegments $\gamma_i$ of length $\ell_i+O(1)$, such that on each $\gamma_i$ the function $\phi(t)$ grows linearly from $0$ to $\ell_i/2$, and then decreases linearly to $0$, all up to a uniform additive error.

It follows that the expression $\frac{\phi(\eta_t)}{t}$ achieves its local maxima at the midpoints of the $\gamma_i$ (up to $O(1)$ error) so
\[
	\limsup_{t\to +\infty} \frac{\phi(\eta_t)}{t}
	= \limsup_{n\to +\infty} \frac{\ell_n/2 + O(1)}{\left[\ell_1 + \cdots + \ell_{n-1}\right] + \ell_{n}/2+O(n)}
\]
Let us now take $\ell_n:= L^n$ with $L>1$ and calculate:
\[
	\frac{\frac{L^n}{2} + O(1)}{\frac{L^n-1}{L-1} + \frac{L^n}{2} + O(n)} =
	\frac{\frac{1}{2} + O\left(L^{-n}\right)}{\frac{1}{L-1}+\frac{1}{2}+ O(nL^{-n}) }
	\xrightarrow{n\to +\infty}\frac{L-1}{L+1}
\]
As $L$ ranges in $(1,+\infty)$ the above limit ranges in $(0,1)$.

To obtain the extreme value $0$, take the sequence $\ell_n:=n^{10}$ (or any other polynomial of degree at least $2$ will suffice).
To obtain the value $1$, take $\ell_n : = 10^{10^n}$ or any other sequence that grows sufficiently fast.

This completes the proof of \autoref{item:pure_recurrence} and the theorem. \hfill \qed

\subsubsection{Summary}\label{sommario}
For later usage, we summarize here what this construction gives us in the recurrent case (ii), where $\delta\in [1,N/2]$ is a given real number and $p=p_r$ is the point in $\partial\ov{\mathcal{T}}$ given by \autoref{thm:volume_of_points} with $\delta^{\rm inf}(p_r)=\delta, \delta^{\rm sup}(p_r)=\frac{N}{2}$.

Recalling \eqref{qz}, the inequalities $\phi(\eta_t)\geq 0$ and $\phi(\eta_t)\leq t+C$ (from \autoref{cor:boundedness_of_volumes_on_F1}), after going back to the $s$ variable show that for all $s\geq 0$ small we have
\begin{equation}\label{cuz1}
\vol_N(a_s)\geq C^{-1}s^{\frac{N}{2}},
\end{equation}
\begin{equation}\label{cuz2}
\vol_N(a_s)\leq Cs.
\end{equation}
Also, by the above construction, there is a sequence $t_n\to+\infty$ (where $\phi(\eta_t)$ achieves its local maxima) such that if $t'_n$ is any other sequence with $|t'_n-t_n|\leq C$ (for some fixed $C$), then we have the very crude bound
\begin{equation}\label{cuz3a}
\phi(\eta_{t'_n})\geq \frac{N-2\delta}{N-2}t'_n - o(t_n).
\end{equation}
Plugging this into \eqref{qz}, and letting as usual $s'_n=e^{-2t'_n}, s_n=e^{-2t_n},$ this gives
\begin{equation}\label{cuz3}
\vol_N(a_{s'_n})\geq C^{-1} (s'_n)^{\delta+o(1)},
\end{equation}
and so along the sequence $s'_n$ the quantity
\[
\frac{\log\vol_N(a_{s'_n})}{\log s'_n},
\]
converges to its liminf, which is $\delta^{\rm inf}(p_r)=\delta.$
Similarly, there is a sequence $\ti{t}_n\to+\infty$ with $\phi(\eta_{\ti{t}_n})=0$, such that
that if $\ti{t}'_n$ is any other sequence with $|\ti{t}'_n-\ti{t}_n|\leq C$ (for some fixed $C$), then we have
\begin{equation}\label{cuz4a}
\phi(\eta_{\ti{t}'_n})\leq C',
\end{equation}
for some fixed $C'$.
Plugging this into \eqref{qz}, and letting $\ti{s}'_n=e^{-2\ti{t}'_n}, \ti{s}_n=e^{-2\ti{t}_n},$ this gives
\begin{equation}\label{cuz4}
\vol_N(a_{\ti{s}'_n})\leq C(\ti{s}'_n)^{\frac{N}{2}},
\end{equation}
which along this sequence matches \eqref{cuz1} up to a fixed multiplicative factor, and also shows that
along the sequence $\ti{s}'_n$ the quantity
\[
\frac{\log\vol_N(a_{\ti{s}'_n})}{\log \ti{s}'_n},
\]
converges to its limsup, which is $\delta^{\rm sup}(p_r)=\frac{N}{2}.$




\section{Regularity of the volume function}
	\label{sec:main_example}

\paragraph{Outline of section}
In this section we prove \autoref{coro}, which describes the sharp regularity of the volume function at the pseudoeffective boundary on Wehler $N$-folds.

\subsection{\texorpdfstring{$C^1$}{C1} differentiability at the endpoint}

Before we give the proof of \autoref{coro}, let us observe the following general fact, which seems to be well-known but whose proof is not available in the literature:
\begin{proposition}\label{diff}
Let $X^n$ be a smooth projective variety, $D$ a pseudoeffective $\mathbb{R}$-divisor and $A$ an ample $\mathbb{R}$-divisor.
Then $f:[0,1]\to\mathbb{R}_{\geq 0}$ given by
$$f(s)=\vol(D+sA),$$
is $C^1$ differentiable on $[0,1]$, and
\begin{equation}\label{derivata}
f'(s)=n\langle (D+sA)^{n-1}\rangle \cdot A,
\end{equation}
holds for all $s\in [0,1]$.
\end{proposition}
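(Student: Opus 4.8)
The plan is to reduce to the $C^1$ differentiability of $\vol$ in the interior of the big cone — due to Boucksom--Favre--Jonsson \cite{bfj} and Lazarsfeld--Musta\c{t}\u{a} \cite{lazarsfeldmustata} — and then to transport this regularity across the endpoint $s=0$ by the fundamental theorem of calculus. First I would note that for $s\in(0,1]$ the class $D+sA$ is big, so by \cite{bfj,lazarsfeldmustata} the function $f$ is $C^1$ on $(0,1]$ with
\[
	f'(s)=n\langle (D+sA)^{n-1}\rangle\cdot A=:g(s),
\]
the derivative along the segment being the directional derivative of $\vol$ at $D+sA$ in the direction $A$, computed by the positive intersection product of \cite{bfj}. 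In particular $g$ is continuous on $(0,1]$.

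Next I would check that $g$ extends continuously to $s=0$. On $(0,1]$ the function $g$ is nondecreasing (monotonicity of $B\mapsto\langle B^{n-1}\rangle\cdot A$ under $B\leq B'$, with $A$ fixed) and bounded below by $0$, and by the very definition of the positive intersection product at a pseudoeffective class as a limit of the decreasing family of big classes $D+sA$ one has $\langle D^{n-1}\rangle=\lim_{s\downarrow 0}\langle (D+sA)^{n-1}\rangle$, so $\lim_{s\downarrow 0}g(s)=n\langle D^{n-1}\rangle\cdot A=:g(0)$ exists. Thus $g$ is continuous on all of $[0,1]$. Now on $[\epsilon,s]\subset(0,1]$ the fundamental theorem of calculus gives $f(s)=f(\epsilon)+\int_\epsilon^s g(\tau)\,d\tau$; letting $\epsilon\downarrow 0$ and using that $f$ is continuous on $[0,1]$ (Lazarsfeld \cite[Theorem 2.2.44]{pag1}) together with the integrability of the continuous function $g$ near $0$, I obtain
\[
	f(s)=f(0)+\int_0^s g(\tau)\,d\tau\qquad\text{for all }s\in[0,1].
\]
Since $g$ is continuous on $[0,1]$, this shows $f\in C^1([0,1])$ with $f'(s)=g(s)$, which is exactly \eqref{derivata}. (If $D$ happens to be big then $f(0)=\vol(D)>0$ and the statement is already contained in \cite{bfj,lazarsfeldmustata}; otherwise $f(0)=\vol(D)=0$.)

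The only delicate points, which require care in quoting the literature rather than new arguments, are: (i) that the directional derivative of $\vol$ at a big class $B$ in an ample direction $A$ equals $n\langle B^{n-1}\rangle\cdot A$ with $\langle\cdot\rangle$ the positive intersection product — the precise statement of \cite{bfj} — and (ii) that the boundary value $\langle D^{n-1}\rangle=\lim_{\epsilon\downarrow0}\langle (D+\epsilon A)^{n-1}\rangle$ appearing on the right of \eqref{derivata} is literally the limit produced by monotonicity above. The remaining ingredients — bounded monotone limits, the fundamental theorem of calculus, and continuity of $\vol$ — are soft, so I do not anticipate any real obstacle there; the main conceptual input is simply the recognition that $f'$, known on $(0,1]$, extends continuously to $0$.
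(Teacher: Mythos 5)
Your proposal is correct and follows essentially the same route as the paper: $C^1$ regularity and the derivative formula on $(0,1]$ from \cite{bfj,lazarsfeldmustata}, continuity of the derivative at $s=0$ via the definition of the positive intersection product at a pseudoeffective class as the limit of $\langle (D+sA)^{n-1}\rangle$, and the fundamental theorem of calculus to conclude differentiability at the endpoint. The paper phrases the last step as a direct estimate of the difference quotient $\bigl(f(s)-f(0)\bigr)/s$ rather than writing $f(s)=f(0)+\int_0^s g$, but this is only a cosmetic difference.
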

Here $\langle\cdot\rangle$ denotes the positive intersection product \cite{bfj}.

\begin{proof}
Thanks to the differentiability result for the volume \cite[Theorem A]{bfj}, \cite[Corollary C]{lazarsfeldmustata}, we know that $f$ is $C^1$ differentiable on $(0,1]$ and that \eqref{derivata} holds for $s\in (0,1]$.
To prove $C^1$ differentiability at the endpoint $s=0$, we first note that by definition \cite[Definition 2.10]{bfj} we have
\[\lim_{s\downarrow 0}\langle (D+sA)^{n-1}\rangle=\langle D^{n-1}\rangle,\]
so if we define \[B:=n\langle D^{n-1}\rangle\cdot A\in\mathbb{R}_{\geq 0},\]
then we have
\begin{equation}\label{lim2}
\lim_{s\downarrow 0}f'(s)=B,
\end{equation}
and from this it follows $f(s)$ is right differentiable at $s=0$ with derivative equal to $B$. Indeed,
given any $0<s$, since $f$ is continuous on $[0,s]$ and differentiable on $(0,s]$, the mean value theorem gives us $\xi_s\in (0,s)$ such that
\[\frac{f(s)-f(0)}{s}=f'(\xi_s),\]
and letting $s\downarrow 0$ and using \autoref{lim2} gives the result.
\end{proof}
\begin{remark}
The argument extends verbatim to the case when $D$ is replaced by a pseudoeffective $(1,1)$-class and $A$ by a K\"ahler class, by using the $C^1$ differentiability result of \cite{wittnystrom} in place of \cite{bfj,lazarsfeldmustata}. It would also immediately extend to arbitrary compact K\"ahler manifolds provided that the conjectured differentiability is proved in that setting.
\end{remark}

\subsection{Failure of higher regularity}
Let now $X$ be a Wehler $N$-fold, $N\geq 3$. Thanks to \autoref{thm:volume_of_points} we know that there exists a pseudoeffective $\mathbb{R}$-divisor $D$ such that
\begin{align}
	\limsup_{s\downarrow 0} \frac{\log \vol(D+sA)}{\log s}&=\frac{N}{2},\label{supinfa}\\
	\liminf_{s\downarrow 0} \frac{\log \vol(D+sA)}{\log s}&=1,\label{supinfb}
\end{align}
for any ample divisor $A$.
The following result, together with \autoref{diff}, proves \autoref{coro}:

\begin{proposition}
For such $X,D,A$, we have that the function $f:[0,1]\to\mathbb{R}_{\geq 0}$ given by
$$f(s)=\vol(D+sA),$$
is not $C^{1,\alpha}$ on $[0,\ve)$ for any $\ve>0$ and any $\alpha>0$.
\end{proposition}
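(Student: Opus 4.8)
The plan is to argue by contradiction, and the only inputs I would need are \autoref{diff} (existence of the one-sided derivative $f'(0)$) together with the oscillation estimates \eqref{supinfa}--\eqref{supinfb}. So suppose $f$ were $C^{1,\alpha}$ on some interval $[0,\ve)$ with $\alpha>0$. Two facts are in hand: $f(0)=\vol(D)=0$, since $D$ lies on the boundary of $\overline{\mathrm{Eff}}(X)$; and by \autoref{diff} the derivative $f'(0)$ exists with $f'(0)=n\langle D^{n-1}\rangle\cdot A\ge 0$. First I would write $f(s)=f(0)+\int_0^s f'(u)\,du=\int_0^s f'(u)\,du$ and feed in the H\"older bound $|f'(u)-f'(0)|\le Cu^{\alpha}$ valid for $0\le u<\ve$, to get the two-sided comparison
\[
  \bigl|\,f(s)-f'(0)\,s\,\bigr|\;\le\;\int_0^s |f'(u)-f'(0)|\,du\;\le\;\frac{C}{1+\alpha}\,s^{1+\alpha},\qquad 0\le s<\ve.
\]
From here I would split according to whether $f'(0)$ vanishes, keeping in mind that $\log s\to-\infty$ as $s\downarrow 0$, so dividing an inequality through by $\log s$ reverses its direction.

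In the case $f'(0)>0$, the displayed estimate gives $f(s)\ge s\bigl(f'(0)-\tfrac{C}{1+\alpha}s^{\alpha}\bigr)\ge \tfrac12 f'(0)\,s$ for $s$ small, hence $\log f(s)\ge\log\bigl(\tfrac12 f'(0)\bigr)+\log s$, and dividing by $\log s<0$ yields $\frac{\log f(s)}{\log s}\le 1+\frac{\log(f'(0)/2)}{\log s}\to 1$. Thus $\limsup_{s\downarrow 0}\frac{\log\vol(D+sA)}{\log s}\le 1$, contradicting \eqref{supinfa}, since $N\ge 3$ gives $N/2\ge\tfrac32>1$. In the case $f'(0)=0$, the estimate reads $0\le f(s)\le\frac{C}{1+\alpha}\,s^{1+\alpha}$, so $\log f(s)\le\log\frac{C}{1+\alpha}+(1+\alpha)\log s$ and, dividing by $\log s<0$, $\frac{\log f(s)}{\log s}\ge (1+\alpha)+\frac{\log(C/(1+\alpha))}{\log s}\to 1+\alpha$. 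Hence $\liminf_{s\downarrow 0}\frac{\log\vol(D+sA)}{\log s}\ge 1+\alpha>1$, contradicting \eqref{supinfb}. Either way a contradiction results, so $f$ cannot be $C^{1,\alpha}$ on any $[0,\ve)$.

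I do not expect a genuine obstacle here: once \autoref{diff} is available and the oscillation bounds are in place, the rest is an elementary real-variable computation, and the two cases $f'(0)=0$ and $f'(0)>0$ are exactly the two regimes where one of the two asymptotic constraints becomes the effective one (in the first the $C^{1,\alpha}$ hypothesis forces $f$ to be too small near $0$ to allow $\liminf=1$, in the second it forces $f$ to be comparable to $s$ and hence too large to allow $\limsup=N/2$). The only points that need a little care are the sign bookkeeping when dividing by the negative quantity $\log s$ and the verification that $f'(0)\ge 0$ so that these are the only two cases. One could alternatively run the same argument through the explicit bounds \eqref{cuz1}, \eqref{cuz2} and the special sequences produced in \autoref{sommario}, but phrasing it via the $\limsup$/$\liminf$ of $\frac{\log\vol}{\log s}$ is cleaner.
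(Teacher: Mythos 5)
Your proof is correct, and its overall shape matches the paper's: argue by contradiction, and play the $C^{1,\alpha}$ hypothesis off against \eqref{supinfa} when the derivative at $0$ is positive and against \eqref{supinfb} when it vanishes. The one place you genuinely diverge is in how the case $f'(0)>0$ is excluded: the paper does not treat it as a case at all, but instead proves unconditionally that $\langle D^{n-1}\rangle=0$ (hence $f'(0)=0$) via the positive intersection product, using Lehmann's chain of inequalities $Cs\leq \langle (D+sA)^{n-1}\cdot sA\rangle\leq \vol(D+sA)$ to contradict $\limsup=\tfrac N2>1$. You reach the same contradiction more elementarily, using only that $f'(0)$ exists and is nonnegative (from \autoref{diff} and $f\geq 0=f(0)$), so that $f'(0)>0$ would force $f(s)\geq \tfrac12 f'(0)s$; this removes the algebro-geometric input entirely and is, if anything, cleaner. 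Your sign bookkeeping when dividing by $\log s<0$ is right in both cases, and the integral form of the Hölder bound is a valid substitute for the paper's mean value theorem step, since \autoref{diff} guarantees $f\in C^1([0,1])$.
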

\begin{proof}
We argue by contradiction, and suppose that $f$ is $C^{1,\alpha}$ on $[0,\ve)$ for some $\ve>0$ and $\alpha>0$.
Using the mean value theorem, we can write for any given $s\in [0,\ve)$
$$f(s)=sf'(\xi),$$
for some $0\leq \xi\leq s$. Now thanks to \eqref{derivata} we have
\[f'(0)=n\langle D^{n-1}\rangle\cdot A.\]
Next, we claim that
\[\langle D^{n-1}\rangle=0.\]
For this we use an argument from \cite[Theorem 6.2 (7)$\leq$(1)]{lehmann} (using the basic properties of the positive product \cite{bfj}): if we had $\langle D^{n-1}\rangle\neq 0$ then we would have $\langle D^{n-1}\rangle\cdot A>0$, and therefore
\[\langle (D+sA)^{n-1}\rangle\cdot A>0,\]
for all $s\geq 0$, hence
$$Cs\leq s\langle (D+sA)^{n-1}\rangle\cdot A=\langle (D+sA)^{n-1}\cdot sA\rangle\leq \langle (D+sA)^n\rangle=\vol(D+sA),$$
and this would imply that for $s>0$ small we have
$$\frac{\log\vol(D+sA)}{\log s}\leq 1+\frac{\log C}{\log s},$$
hence
$$\limsup_{s \downarrow 0} \frac{\log \vol(D+sA)}{\log s} \leq 1,$$
which contradicts \eqref{supinfa}.

We thus conclude that $f'(0)=0$, and since $f'$ is assumed to be H\"older continuous of order $\alpha>0$ on $[0,\ve)$, we have $|f'(\xi)|\leq C\xi^\alpha\leq Cs^\alpha$ for some fixed $C>0$, and so
$$f(s)\leq Cs^{1+\alpha},$$
for $s$ close to $0$.
Therefore
$$\frac{\log\vol(D+sA)}{\log s}\geq 1+\alpha+\frac{\log C}{\log s},$$
which implies
$$\liminf_{s \downarrow 0} \frac{\log \vol(D+sA)}{\log s} \geq 1+\alpha,$$
contradicting \eqref{supinfb}.
\end{proof}


\section{From volume to sections}
	\label{sec:from_volume_to_sections}

\paragraph{Outline of section}
In this section we connect the decay rate of $\vol(D+sA)$ as $s\to 0$ to the growth rate of $h^0(X,\floor{mD}+A)$ as $m\to+\infty$, and use this to complete the proof of \autoref{main}, and to derive \autoref{kappa} and \autoref{scemo}.

\subsection{Recap}
Let $X$ be a Wehler $N$-fold, $N\geq 3$. Thanks to \autoref{thm:volume_of_points} we know that given any $\delta\in [1,\frac{N}{2}]$ we can find a pseudoeffective $\mathbb{R}$-divisor $D$ which satisfies
\begin{align}
	\limsup_{s\downarrow 0} \frac{\log \vol(D+sA)}{\log s}&=\frac{N}{2},\label{supinf2}\\
	\liminf_{s\downarrow 0} \frac{\log \vol(D+sA)}{\log s}&=\delta,\label{supinf2a}
\end{align}
for any ample divisor $A$, which proves \eqref{1} and \eqref{2} in \autoref{main}. To complete the proof of \autoref{main} we need to prove \eqref{3} and \eqref{4}, namely we need to show that if $A$ is sufficiently ample then
\begin{align}
  \liminf_{m \to \infty} \frac{\log h^0(X,\floor{mD} + A)}{\log m} &= \frac{N}{2} \label{supinf3}\\
  \limsup_{m \to \infty} \frac{\log h^0(X,\floor{mD} + A)}{\log m} &= N-\delta. \label{supinf4}
\end{align}
These in fact follow from \eqref{supinf2}, \eqref{supinf2a} together with the very recent \cite[Theorem 1.7, Lemma 2.3]{jiangwang}, which uses sophisticated results from birational geometry due to Birkar \cite{birkar}, and it applies to all Calabi--Yau $3$-folds, as well as higher-dimensional Calabi--Yau manifolds for which the Kawamata--Morrison cone conjecture is known to hold (this includes Wehler $N$-folds thanks to \cite{oguisocantat}). On the other hand, our proof of \eqref{supinf3}, \eqref{supinf4}, which we had found before \cite{jiangwang} was posted, is quite direct and self-contained.

\subsection{Effective nonvanishing on Wehler manifolds}
During the proof of \eqref{supinf3}, \eqref{supinf4}, we will need the following result, which is an analog of \cite[Lemma 5]{lesieutre}:

\begin{proposition}\label{c2x} Let $X$ be a Wehler $N$-fold, $N\geq 3$.
There is a constant $C_N>0$ such that for every nef and big Cartier divisor $G$ on $X$ we have
\begin{equation}\label{gurgh}
\frac{(G^N)}{N!}\leq h^0(X,G)\leq C_N\frac{(G^N)}{N!}.
\end{equation}
\end{proposition}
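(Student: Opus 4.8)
The lower bound is immediate from the asymptotic Riemann–Roch/Kawamata–Viehweg vanishing setup: for a nef and big Cartier divisor $G$ on the Calabi–Yau $N$-fold $X$ one has $h^i(X,G) = h^i(X,G+K_X) = 0$ for $i>0$ by Kawamata–Viehweg vanishing, so $h^0(X,G) = \chi(X,G)$, and $\chi(X,G) \geq (G^N)/N!$ follows from the fact that the lower-order terms of $\chi(X,G) = \int_X \operatorname{ch}(\mathcal{O}(G))\operatorname{td}(X)$ are nonnegative on the nef cone (the Todd genus of a Calabi–Yau is $\chi(\mathcal{O}_X)$, and the intermediate intersection numbers $(G^{k}\cdot c_{N-k}(X))$ are $\geq 0$ for $G$ nef by Miyaoka-type positivity / the fact that Chern classes of Wehler hypersurfaces pair nonnegatively with nef classes — this can also be checked by hand on $(\mathbb{P}^1)^{N+1}$). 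So the main content is the upper bound $h^0(X,G) \leq C_N (G^N)/N!$ with $C_N$ independent of $G$.

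For the upper bound, the plan is to reduce to the case $G = \sum_i a_i \omega_i$ with $a_i \in \mathbb{Z}_{\geq 0}$ lying in the nef cone $\ov{\cA}$, using that $\operatorname{PsAut}(X) = W$ acts on $N^1(X)$ preserving $h^0$ (pseudo-automorphisms are isomorphisms in codimension $1$, hence preserve spaces of global sections of divisors), so every nef big $G$ is $W$-equivalent to one in the fundamental domain $\ov{\cA}\cap(\text{nef cone})= \ov{\cA}$. Once $G=\sum_i a_i\omega_i$ with $a_i \geq 0$, one has a surjection (or at least an injection on sections) coming from the fact that $X \subset (\mathbb{P}^1)^{N+1}$ and $\omega_i$ is the restriction of $\mathcal{O}(e_i)$: thus $h^0(X,G) \leq h^0((\mathbb{P}^1)^{N+1}, \mathcal{O}(a_0,\dots,a_N))$ would be far too lossy, so instead I would use the structure sequence $0 \to \mathcal{O}_{(\mathbb{P}^1)^{N+1}}(a_\bullet - \mathbf{2}) \to \mathcal{O}_{(\mathbb{P}^1)^{N+1}}(a_\bullet) \to \mathcal{O}_X(G) \to 0$ (as $X$ has class $(2,\dots,2)$) to get $h^0(X,G) \leq h^0((\mathbb{P}^1)^{N+1},\mathcal{O}(a_\bullet)) = \prod_i (a_i+1)$ when all $a_i \geq 1$, with small modifications when some $a_i \in \{0\}$; and then compare $\prod_i(a_i+1)$ with $(G^N)/N! = \vol_N(G)$, which by Definition \ref{def:k_volume_function} equals $\sum_{|I|=N}\prod_{i\in I} a_i = \vol_N(\sum a_i\omega_i)$ up to the normalization of the intersection form. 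The key elementary inequality is then that on the nef cone $\ov{\cA}$ one has $\prod_{i=0}^N(a_i+1) \leqapprox_N \sum_{|I|=N}\prod_{i\in I}a_i$ — this fails for arbitrary nonnegative $a_i$ (take all $a_i = 0$ except it's fine, but take $a_0$ huge and the rest $0$: LHS $\sim a_0$, RHS $=0$), so it is exactly here that nefness is used: a class $\sum a_i \omega_i$ is nef iff it satisfies the inequalities $\ip{\alpha_i, \sum a_j\omega_j}\geq 0$, i.e. $\sum_{j\neq i} a_j \geq (N-2)a_i$ for all $i$ (from \eqref{eqn:quadraticform_omega_basis}), which forces the $a_i$ to be "balanced" and rules out the degenerate cases, giving $\max_i a_i \leqapprox_N \min\{\text{the $k$-th largest}\}$ type bounds that yield $\prod(a_i+1)\leqapprox_N \vol_N(a)$ whenever $\vol_N(a)>0$, i.e. whenever $G$ is big.

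The main obstacle I expect is twofold: first, making the comparison $h^0(X,G)\leq C_N\prod_i(a_i+1)$ genuinely uniform and handling boundary faces where some $a_i=0$ (there $X$ maps to a lower product of $\mathbb{P}^1$'s and one must argue on the image, or use that $\vol_N$ still controls things via the $\vol_k$ for $k\leq N$); and second, verifying the purely combinatorial inequality $\prod_{i=0}^N(a_i+1) \leqapprox_N \sum_{|I|=N}\prod_{i\in I} a_i$ for integer points $a=(a_i)$ in $\ov{\cA}$ with $\vol_N(a)>0$ — this should follow because nefness plus $\vol_N(a)\geq 1$ (integrality) forces all $a_i\geq 1$ except possibly a controlled number, after which $\prod(a_i+1)\leq 2^{N+1}\prod a_i \leq 2^{N+1}\max_i a_i \cdot \sum_{|I|=N}\prod_{i\in I}a_i/(\text{something})$; I'd carry this out by showing nefness implies $a_i \leq (N-1)\min_j a_j$ roughly, so all coordinates are comparable and $\prod_i(a_i+1)\asymp_N a_0\cdots a_N/a_{j_0} \cdot a_{j_0}\asymp_N \vol_N(a)$. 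Alternatively, and perhaps more cleanly, one can avoid $(\mathbb{P}^1)^{N+1}$ entirely and prove the upper bound by induction on $N$ using a generic member $\omega_N \in |\mathcal{O}_X(\omega_N)|$, restricting $G$ to it (a Wehler-type $(N-1)$-fold) via $0\to \mathcal{O}_X(G-\omega_N)\to\mathcal{O}_X(G)\to\mathcal{O}_{\omega_N}(G)\to 0$, bounding $h^0(\mathcal{O}_{\omega_N}(G))$ by the inductive hypothesis and $h^0(\mathcal{O}_X(G-\omega_N))$ by descending induction on $(G^N)$ — but controlling that $G-\omega_N$ stays effective/nef is delicate, so I would default to the explicit ambient-space argument above.
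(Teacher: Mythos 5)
Your \textbf{upper bound} has a genuine gap, concentrated in the ``key elementary inequality'' $\prod_{i=0}^N(a_i+1)\leq C_N\sum_{|I|=N}\prod_{i\in I}a_i$. This inequality is false, and it fails precisely in the most balanced (hence most ample) case rather than in the degenerate ones you worry about: take $a_0=\cdots=a_N=M$. Then the left side is $(M+1)^{N+1}$ while the right side is $(N+1)M^N$, so the ratio grows like $M$. This is not an artifact: $h^0\bigl((\mathbb{P}^1)^{N+1},\mathcal{O}(M,\dots,M)\bigr)\sim M^{N+1}$ genuinely overshoots $h^0(X,G)\asymp (G^N)/N!\asymp M^N$ by a factor of $M$, so \emph{any} route that passes through $\prod_i(a_i+1)$ is doomed. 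Your attempt to rescue the inequality via nefness also rests on a wrong characterization: the nef cone is $\overline{\mathcal{A}}=\{\sum_i a_i\omega_i: a_i\geq 0\}$, so nefness imposes no balancedness at all; the inequalities $\sum_{j\neq i}a_j\geq (N-2)a_i$ you quote cut out the dual cone $\overline{\mathcal{C}}$ spanned by the $\alpha_i$, not the nef cone (relatedly, no $W$-translation is needed to put a nef $G$ into $\overline{\mathcal{A}}$ --- it is already there). The fix is to keep the term you discard from your own structure sequence: the long exact sequence gives $h^0(X,G)\leq h^0(\mathcal{O}(a_\bullet))-h^0(\mathcal{O}(a_\bullet-\mathbf{2}))+h^1(\mathcal{O}(a_\bullet-\mathbf{2}))$, and when all $a_i\geq 1$ the difference equals $\prod_i(a_i+1)-\prod_i(a_i-1)=2(\tau_N+\tau_{N-2}+\cdots)$, where $\tau_k$ is the $k$-th elementary symmetric function of the $a_i$. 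Since $(G^N)/N!=2\tau_N$, the combinatorial step you actually need is $\tau_{N-2j}\leq C_N\tau_N$, which follows from \emph{bigness} (at most one $a_i$ can vanish, the rest are $\geq 1$), not from any balancedness. This corrected route converges to the same formula the paper obtains by a direct Hirzebruch--Riemann--Roch computation, namely $h^0(X,G)-\frac{(G^N)}{N!}=\sum_{j\geq 1}\frac{2}{2j+1}\tau_{N-2j}$.

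For the \textbf{lower bound}, your appeal to ``Miyaoka-type positivity'' of all intermediate terms is not available as a citation: Miyaoka--Yau handles only $\mathrm{Td}_2$, and quasi-effectivity of the higher Todd components of a Calabi--Yau is exactly the open question of Cao--Jiang that the paper discusses. So the nonnegativity of the lower-order Riemann--Roch terms must be \emph{computed} for Wehler hypersurfaces --- the paper does this via $\mathrm{Td}(X)=\sinh(\sigma_1)/\sigma_1$, giving $\mathrm{Td}_{2j}(X)=\frac{1}{2j+1}\sigma_{2j}$ and vanishing odd parts, whence each term is a nonnegative multiple of $\tau_{N-2j}$. Your parenthetical ``can be checked by hand on $(\mathbb{P}^1)^{N+1}$'' is the right instinct, but as written the lower bound is asserted rather than proved, and the upper bound as written does not go through.
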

\noindent An explicit constant $C_N$ can be easily found by tracing through our argument.

\begin{remark}
Observe that \autoref{c2x} implies that every nef and big line bundle on $X$ is effective, a fact which is conjectured by Kawamata to hold for all Calabi--Yau manifolds \cite[Conjecture 2.1]{kawa}. This conjecture was proved in \cite[Theorem 1.6]{caojiang} for a class of Calabi--Yau complete intersections which includes our $X$ as a special case.
\end{remark}

\begin{remark}
Moreover, in \cite[Question 1.8]{caojiang} the authors ask whether all the components $\mathrm{Td}_{2j}$ of the Todd class of a Calabi--Yau manifold are quasi-effective (meaning that they intersect nonnegatively with every nef line bundle). The case of $\mathrm{Td}_2$ is a classical result of Miyaoka and Yau \cite{miyaoka, yau}, and \cite[Theorem 4.6]{caojiang} proves this for $\mathrm{Td}_4$ for complete intersections Calabi--Yau manifolds. Our proof of \autoref{c2x} shows in particular (see \eqref{todd} below) that  the question of Cao-Jiang has an affirmative answer on Wehler manifolds.
\end{remark}

\begin{proof}
We can write
$$G=\sum_{j=1}^{N+1} x_j\omega_j,$$
with $x_j\in\mathbb{N}$.
The intersection form on $X$ has the property that all classes $\omega_j$ square to zero, i.e. $\omega_j^2=0$ in $H^4(X,\mathbb{Z})$, and for the top order intersections we have
\begin{equation}\label{intform}\omega_{i_1}\omega_{i_2}\cdots \omega_{i_N}=\begin{cases}2,\quad \text{if }i_1,\dots,i_N\ \text{distinct},\\
0,\quad\text{otherwise}.
\end{cases}\end{equation}
Since $X$ is Calabi--Yau, Kawamata--Viehweg vanishing gives
$$h^i(X,G)=0,\quad i>0,$$
and Hirzebruch--Riemann--Roch then gives
\begin{equation}\label{hrr}\begin{split}
h^0(X,G) &= \chi(X,G) =\int_X e^{c_1(G)}\mathrm{Td}(X)\\
&=\frac{G^N}{N!}+\sum_{j=1}^N \frac{1}{(N-j)!}(G^{N-j}\cdot \mathrm{Td}_j(X)),
\end{split}\end{equation}
where $\mathrm{Td}_j(X)\in H^{j,j}(X)$ is the degree $2j$ component of the Todd class of $X$. The main task is thus to understand the Todd class. We have the exact sequence
$$0\to TX\to (T(\mathbb{P}^1)^{N+1})\big|_X\to N\to 0,$$
where $N=\mathcal{O}(2,\dots, 2)|_X$ is the normal bundle of $X$ in $(\mathbb{P}^1)^{N+1}$. By the multiplicative property of the Todd class, we see that
\[\begin{split}
\mathrm{Td}(X)&=\mathrm{Td}(T(\mathbb{P}^1)^{N+1}) (\mathrm{Td}(N))^{-1}\\
&=\prod_{j=1}^{N+1}(1+\omega_j) (\mathrm{Td}(N))^{-1},
\end{split}\]
using that $\mathrm{Td}(T\mathbb{P}^1)=1+\omega$ (where $\omega$ is the hyperplane class). It will be useful to denote by $\sigma_p$ the $p$th elementary symmetric function on the variables $\omega_j$ ($0\leq p\leq N$), so that we can write
\[\mathrm{Td}(X)=\left(\sum_{p=0}^N \sigma_p\right)(\mathrm{Td}(N))^{-1}.\]
But recalling now  that any monomial in the variables $\omega_j$ where some variable is raised to a power larger than $1$ vanishes, we can use the multinomial expansion of $\sigma_1^p$ and write for $0\leq p\leq N$,
$$\sigma_p = \frac{\sigma_1^p}{p!},$$
and so
\[\begin{split}
\mathrm{Td}(X)&=\left(\sum_{p=0}^N \frac{\sigma_1^p}{p!}\right)(\mathrm{Td}(N))^{-1}\\
&=e^{\sigma_1}(\mathrm{Td}(N))^{-1}.
\end{split}\]
 The first Chern class of $N$ is equal to
$$c_1(N)=2\sum_{j=1}^{N+1}\omega_j=2\sigma_1,$$
and so from the definition of Todd class we see that
$$\mathrm{Td}(N)=\frac{c_1(N)}{1-e^{-c_1(N)}}=\frac{2\sigma_1}{1-e^{-2\sigma_1}},$$
and so
\[\begin{split}
\mathrm{Td}(X)&=e^{\sigma_1}\frac{1-e^{-2\sigma_1}}{2\sigma_1}\\
&=\frac{\sinh(\sigma_1)}{\sigma_1}\\
&=\sum_{p\geq 0}\frac{\sigma_1^{2p}}{(2p+1)!},
\end{split}\]
from which we see that for all $j\geq 0$
$$\mathrm{Td}_{2j+1}(X)=0,$$
which is a general fact when $c_1(X)=0$ \cite[Remark 1, p.14]{hirz}, and also
\begin{equation}\label{todd}
\mathrm{Td}_{2j}(X)=\frac{\sigma_1^{2j}}{(2j+1)!}=\frac{(2j)!}{(2j+1)!}\sigma_{2j}.
\end{equation}
Inserting this into \eqref{hrr}  gives
\[\begin{split}
h^0(X,G)-\frac{G^N}{N!}&=\sum_{j=1}^{\lfloor N/2\rfloor} \frac{1}{(N-2j)!}(G^{N-2j}\cdot \mathrm{Td}_{2j}(X))\\
&=\sum_{j=1}^{\lfloor N/2\rfloor} \frac{(2j)!}{(2j+1)!(N-2j)!}(G^{N-2j}\cdot\sigma_{2j})\\
&=\sum_{j=1}^{\lfloor N/2\rfloor} \frac{(2j)!}{(2j+1)!(N-2j)!}\left(\left(\sum_{j=1}^{N+1}x_j\omega_j\right)^{N-2j}\cdot\sigma_{2j}\right),
\end{split}\]
and writing $\tau_p$ for the $p$th elementary symmetric function on the variables $x_j$, and recalling again that $\omega_j^2=0$ as well as \eqref{intform}, we can expand
\[\begin{split}
&\left(\left(\sum_{j=1}^{N+1}x_j\omega_j\right)^{N-2j}\cdot\sigma_{2j}\right)\\
&=(N-2j)!\sum_{i_1<\cdots<i_{N-2j}}x_{i_1}\cdots x_{i_{N-2j}}\left(\omega_{i_1}\cdots \omega_{i_{N-2j}}\cdot\sigma_{2j}\right)\\
&=2(N-2j)!\sum_{i_1<\cdots<i_{N-2j}}x_{i_1}\cdots x_{i_{N-2j}}\\
&=2(N-2j)!\tau_{N-2j},
\end{split}\]
and so
\begin{equation}\label{interm}\begin{split}
h^0(X,G)-\frac{G^N}{N!}&=\sum_{j=1}^{\lfloor N/2\rfloor} \frac{2(2j)!}{(2j+1)!}\tau_{N-2j},
\end{split}\end{equation}
and since $x_j\geq 0$, we have $\tau_{N-2j}\geq 0$ and so the first inequality in \eqref{gurgh} follows. As for the second one, observe that
$$\frac{G^N}{N!}=2\tau_N,$$
so the main task is to obtain an upper bound for
$$\frac{\tau_{N-2j}}{\tau_N}=\frac{x_1\cdots x_{N-2j}+\cdots+x_{2j+2}\cdots x_{N+1}}{x_1\cdots x_N+\cdots +x_2\cdots x_{N+1}},$$
where $1\leq j\leq \lfloor N/2\rfloor$.

For this, since $G$ is big (i.e. $\tau_N>0$) at most one of the $x_j$'s can vanish. First assume that they are all strictly positive integers (i.e. $x_j\geq 1$ for all $j$), then each term in the sum in the numerator is bounded above by the whole denominator, and since the numerator contains $\binom{N+1}{2j+1}$ terms we can bound
$$\frac{\tau_{N-2j}}{\tau_N}\leq \binom{N+1}{2j+1}\leq C_N.$$
If on the other hand one of the $x_j$'s vanishes, say $x_{N+1}=0$, then
$$\frac{\tau_{N-2j}}{\tau_N}=\frac{x_1\cdots x_{N-2j}}{x_1\cdots x_N}+\cdots+\frac{x_{2j+1}\cdots x_{N}}{x_1\cdots x_N},$$
and each term in this sum is bounded above by $1$, so on this case we also have
$$\frac{\tau_{N-2j}}{\tau_N}\leq \binom{N}{2j}\leq C_N.$$
Going back to \eqref{interm} we obtain
\[\begin{split}
h^0(X,G)-\frac{G^N}{N!}\leq C_N\sum_{j=1}^{\lfloor N/2\rfloor} \frac{2(2j)!}{(2j+1)!}\tau_N=2C'_N\tau_N=C'_N\frac{G^N}{N!},
\end{split}\]
which proves the other inequality in \eqref{gurgh}.
\end{proof}

\subsection{Completion of the proof of \autoref{main}}

We are now ready to complete the proof of \autoref{main}, by showing \eqref{supinf3} and \eqref{supinf4}.
To start, observe that to define the rounddown $\lfloor mD\rfloor$ we have to choose an actual $\mathbb{R}$-divisor $D$ (not just a numerical equivalence class of $\mathbb{R}$-divisors). However, a simple argument as in \cite[Lemma 2.2]{jiangwang} (originating from \cite[V.1.3, V.2.7 (1)]{nakayama}) shows that the quantities on the LHS of \eqref{supinf3}, \eqref{supinf4} only depend on the numerical equivalence class of $D$. We will make a precise choice presently.

We have now our pseudoeffective $\mathbb{R}$-divisor (numerical class) $D$ such that \eqref{supinf2}, \eqref{supinf2a} hold. We can choose very ample divisors $\{A_j\}$ whose numerical classes form a basis of $N^1(X)$, and then choose an $\mathbb{R}$-divisor representing the class of $D$ which is in the $\mathbb{R}$-linear span of the $A_j$'s. With this choice, given any $m\in\mathbb{N}_{>0}$,
the divisors
$$\varepsilon_m = mD - \floor{mD},$$
are ample (or trivial) and they lie in a compact subset of $N^1(X)_{\mathbb{R}}$. We then choose $A$ sufficiently ample so that $A\geq 2\ve_m$ holds for all $m\geq 1$.

The Cartier divisor $\floor{mD}+A$ is then big for all $m\geq 1$ since we can write it as
\begin{equation}\label{triv}
\floor{mD}+A=\underbrace{mD+\frac{A}{2}}_{\textrm{big}}+\underbrace{\frac{A}{2}-\ve_m}_{\textrm{pseff}}.
\end{equation}
Given $m\geq 1$, thanks to \autoref{prop:characterizing_the_pseudoeffective_cone} we know that there exists a pseudo-automorphism $\phi$ such that the pullback divisor $\phi(\floor{mD}+A)$ is nef. Since pulling back via $\phi$ does not affect the volume or the dimension of spaces of sections, it follows that $\phi(\floor{mD}+A)$ is also big and we have
\[
h^0(X,\floor{mD} + A) = h^0(X,\phi(\floor{mD}+A)).
\]
Thus \[G := \phi(\floor{mD}+A),\] is a big and nef Cartier divisor. To relate the dimension of the space of sections of $G$ to its volume
$$\vol(G)=(G^N),$$
we use \autoref{c2x} which gives
\begin{equation}\label{eq1}\begin{split}
\vol(\floor{mD}+A)&=\vol(\phi(\floor{mD}+A))\leq N! h^0(X,\phi(\floor{mD}+A))\\
&\leq C_N \vol(\phi(\floor{mD}+A))=C_N\vol(\floor{mD}+A).
\end{split}\end{equation}
Recalling that $ h^0(X,\phi(\floor{mD}+A))= h^0(X,\floor{mD}+A)$, this shows that  $h^0(X,\floor{mD}+A)$ is uniformly comparable to $\vol(\floor{mD}+A)$.

\subsubsection{Volume and round-downs}
	\label{sssec:volume_and_round_downs}
Our next goal is to compare $\vol(\floor{mD}+A)$ to $\vol(mD+A)$. One direction is easy, since
$\varepsilon_m = mD - \floor{mD}$ is ample (or trivial), and so
\begin{equation}\label{eq2}
\vol(\floor{mD}+A)\leq \vol(\floor{mD}+A+\ve_m)=\vol(mD+A).
\end{equation}
On the other hand, from \eqref{triv} we see that
\begin{equation}\label{eq3}\begin{split}
\vol(\floor{mD}+A)&\geq \vol\left(mD+\frac{A}{2}\right),
\end{split}
\end{equation}
and combining \eqref{eq1}, \eqref{eq2} and \eqref{eq3} gives
\begin{equation}\label{optimum}
m^N \vol \left( D+\frac{1}{2m} A \right) \leq N!h^0(X,\floor{mD}+A) \leq C_N m^N \vol \left(D+ \frac1m A \right),
\end{equation}
and so
\begin{equation}\label{aq0}
\begin{split} N - \frac{\log \vol \left( D+\frac{1}{2m}A \right)}{\log \left( \frac1m \right)} &\leq \frac{\log h^0(X,\floor{mD}+A)}{\log m}+\frac{\log N!}{\log m} \\
&\leq   \frac{\log C_N}{\log m} + N - \frac{\log \vol \left( D+\frac1m A \right)}{\log \left( \frac1m \right)}.
\end{split}\end{equation}

\subsubsection{The $\liminf$}
	\label{sssec:the_liminf}
We now claim that
\begin{equation}\label{aq1}
\liminf_{m \to \infty} \frac{\log \vol \left( D+\frac1m A \right)}{\log \left( \frac1m \right)}=\delta.
\end{equation}
Indeed, from \eqref{supinf2a} we clearly have that
\begin{equation}\label{clear1}
\liminf_{m \to \infty} \frac{\log \vol \left( D+\frac1m A \right)}{\log \left( \frac1m \right)} \geq \liminf_{s \downarrow 0} \frac{\log \vol \left( D+sA \right)}{\log s}=\delta,
\end{equation}
and we claim that equality holds here. Indeed, recall that using the parameter $t=-\frac{1}{2}\log s,$ the liminf on the RHS is achieved along a fast-growing sequence $t_n\to+\infty$, and furthermore thanks to the discussion in \autoref{sommario} we know that if $t'_n\to+\infty$ is another sequence with $|t'_n-t_n|\leq C$ for some fixed $C$ then the liminf on the RHS is also achieved along $\{t'_n\}$. It then suffices to observe that the sequence
\[t'_n:=\frac{1}{2}\log \lfloor e^{2t_n}\rfloor,\]
satisfies $|t'_n-t_n|\leq C$ for some fixed $C$ and all $n$ large, and so if we take
\[m_n:=e^{2t'_n}=\lfloor e^{2t_n}\rfloor,\]
then we will have
\[\liminf_{n\to+\infty}\frac{\log \vol \left( D+\frac{1}{m_n} A \right)}{\log \left( \frac{1}{m_n} \right)}=\delta,\]
which together with \eqref{clear1} implies \eqref{aq1}.  Arguing in a similar way, we see that
\begin{equation}\label{aq2}
\liminf_{m \to \infty} \frac{\log \vol \left( D+\frac{1}{2m} A \right)}{\log \left( \frac1m \right)}=\delta.
\end{equation}
\subsubsection{The $\limsup$}
	\label{sssec:the_limsup}
Next, we claim that
\begin{equation}\label{aq3}
\limsup_{m \to \infty} \frac{\log \vol \left( D+\frac1m A \right)}{\log \left( \frac1m \right)}=\frac{N}{2}.
\end{equation}
Indeed, from \eqref{supinf2} we have
\begin{equation}\label{clear2}
\limsup_{m \to \infty} \frac{\log \vol \left( D+\frac1m A \right)}{\log \left( \frac1m \right)} \leq \limsup_{s \downarrow 0} \frac{\log \vol \left( D+sA \right)}{\log s}=\frac{N}{2},
\end{equation}
and again we claim that equality holds here. Switching to $t=-\frac{1}{2}\log s,$ the limsup on the RHS is  achieved along some other fast-growing sequence $\ti{t}_n\to+\infty$, and again from \autoref{sommario} we know that this limsup is also achieved along any other sequence $\{\ti{t}'_n\}$ with $|\ti{t}'_n-\ti{t}_n|\leq C$, so if we choose
\begin{equation}\label{zat1}
\ti{t}'_n:=\frac{1}{2}\log \lfloor e^{2\ti{t}_n}\rfloor,
\end{equation}
then $|\ti{t}'_n-\ti{t}_n|\leq C$ for all $n$ large, so if we take
\begin{equation}\label{zat2}
\ti{m}_n:=e^{2\ti{t}'_n}=\lfloor e^{2\ti{t}_n}\rfloor,
\end{equation}
then we will have
\[\limsup_{n\to+\infty}\frac{\log \vol \left( D+\frac{1}{\ti{m}_n} A \right)}{\log \left( \frac{1}{\ti{m}_n} \right)}=\frac{N}{2},\]
which together with \eqref{clear2} implies \eqref{aq3}.  Arguing in a similar way, we see that
\begin{equation}\label{aq4}
\limsup_{m \to \infty} \frac{\log \vol \left( D+\frac{1}{2m} A \right)}{\log \left( \frac1m \right)}=\frac{N}{2}.
\end{equation}
Inserting \eqref{aq1}, \eqref{aq2}, \eqref{aq3} and \eqref{aq4} into \eqref{aq0} finally proves \eqref{supinf3} and \eqref{supinf4}.

\subsection{Numerical dimensions}
In this section we give the proof of \autoref{kappa} and \autoref{scemo}. Here $X,D$ are as in \autoref{coro}, so that \eqref{supinf2}, \eqref{supinf2a}, \eqref{supinf3}, \eqref{supinf4} hold with $\delta=1.$

\begin{proof}[Proof of \autoref{kappa}]
First, from \eqref{cuz1} and \eqref{optimum}, we see that for all $m\geq 1$ we have
\[
N!\,h^0(X,\floor{mD}+A)\geq m^N \vol \left( D+\frac{1}{2m} A \right)\geq C^{-1}m^{\frac{N}{2}},
\]
which implies
\[
\liminf_{m\to+\infty}\frac{h^0(X,\floor{mD}+A)}{m^{\frac{N}{2}}}>0,
\]
and so $\kappa^-_\sigma(D)\geq \left\lfloor\frac{N}{2}\right\rfloor$ and $\kappa^{\mathbb{R},-}_\sigma(D)\geq\frac{N}{2}$. On the other hand, taking the sequence $\ti{s}_n=e^{-2\ti{t}_n}$ such that \eqref{cuz4} holds, define
$\ti{t}'_n$ by \eqref{zat1} and $\ti{m}_n$ by \eqref{zat2}, then  $|\ti{t}'_n-\ti{t}_n|\leq C$ and so
taking $\ti{s}'_n=\frac{1}{\ti{m}_n}$, from  \eqref{cuz4} and \eqref{optimum} we see that
\[h^0(X,\floor{\ti{m}_nD}+A)\leq C_N \ti{m}_n^N \vol \left(D+ \frac{1}{\ti{m}_n} A \right)\leq C\ti{m}_n^{\frac{N}{2}},\]
which implies that for every $\ve\in\mathbb{R}_{>0}$ we have
\[
\liminf_{m\to+\infty}\frac{h^0(X,\floor{mD}+A)}{m^{\frac{N}{2}+\ve}}=0,
\]
and so $\kappa^-_\sigma(D)\leq \left\lfloor\frac{N}{2}\right\rfloor$ and $\kappa^{\mathbb{R},-}_\sigma(D)\leq\frac{N}{2}$, and the first equalities in \eqref{agogn1} and \eqref{agogn2} are proved.

Next, from \eqref{cuz2} and \eqref{optimum}, we see that  for all $m\geq 1$ we have
\[
h^0(X,\floor{mD}+A)\leq C_Nm^N \vol \left( D+\frac{1}{m} A \right)\leq C^{-1}m^{N-1},
\]
which implies
\[
\limsup_{m\to+\infty}\frac{h^0(X,\floor{mD}+A)}{m^{N-1}}<\infty,
\]
and so $\kappa_\sigma(D)\leq \kappa^+_\sigma(D)\leq N-1$ and $\kappa^{\mathbb{R}}_\sigma(D)\leq \kappa^{\mathbb{R},+}_\sigma(D)\leq N-1$. On the other hand, taking the sequence $s_n=e^{-2t_n}$ such that \eqref{cuz3} holds, define
\[t'_n:=\frac{1}{2}\log\left(2\left\lfloor\frac{1}{2}e^{2t_n}\right\rfloor\right),\]
which satisfies $|t'_n-t_n|\leq C$, and
\[m_n:=\left\lfloor\frac{1}{2}e^{2t_n}\right\rfloor,\]
so taking $s'_n=\frac{1}{2m_n}$, from  \eqref{cuz3} and \eqref{optimum} we see that
\[N!\,h^0(X,\floor{m_nD}+A)\geq m_n^N \vol \left(D+ \frac{1}{2m_n} A \right)\geq Cm_n^{N-1-o(1)},\]
which implies that for every $\ve\in\mathbb{R}_{>0}$ we have
\begin{equation}\label{inutil}
\limsup_{m\to+\infty}\frac{h^0(X,\floor{mD}+A)}{m^{N-1-\ve}}=+\infty,
\end{equation}
and so $\kappa^{\mathbb{R},+}_\sigma(D)\geq\kappa^{\mathbb{R}}_\sigma(D)\geq N-1$, and $\kappa_\sigma(D)\geq N-2$.
Observe that \eqref{inutil} also means that if $k\in\mathbb{R}_{\geq 0}$ is such that $$\limsup_{m\to+\infty}\frac{h^0(X,\floor{mD}+A)}{m^{k}}<\infty,$$ then necessarily $k\geq N-1$,
and so $\kappa^+_\sigma(D)\geq N-1$, and the rest of \eqref{agogn1} and \eqref{agogn2} are proved.

Lastly, from \eqref{cuz1} we see that there is $c>0$ such that for all $s>0$ small we have
\[\vol(D+sA)\geq cs^{\frac{N}{2}}\geq c s^{N-\left\lfloor\frac{N}{2}\right\rfloor},\]
from which it follows immediately that $\nu_{\rm vol}(D)\geq\left\lfloor\frac{N}{2}\right\rfloor$ and $\nu^{\mathbb{R}}_{\rm vol}(D)\geq\frac{N}{2}$, while \eqref{cuz4} gives us a sequence $s_n\downarrow 0$ with
\[\vol(D+sA)\leq Cs_n^{\frac{N}{2}},\]
which implies that if $k\in\mathbb{R}_{\geq 0}$ satisfies
\[\vol(D+sA)\geq cs^{N-k},\]
for some $c>0$ and all $s>0$ small, then we must have $k\leq \frac{N}{2}$, and so
$\nu_{\rm vol}(D)\leq\left\lfloor\frac{N}{2}\right\rfloor$ and $\nu^{\mathbb{R}}_{\rm vol}(D)\leq\frac{N}{2}$, and so \eqref{agogn3} is proved.
\end{proof}

To conclude, we turn to the proof of \autoref{scemo}.

\begin{proof}[Proof of \autoref{scemo}]
Suppose \eqref{pirla} holds. Up to enlarging $A$, we may assume without loss that $A$ is sufficiently ample, so that using \eqref{optimum} and \eqref{agogn1} we get
\[
C^{-1}m^{\kappa_\sigma(D)}\leq h^0(X,\floor{mm_0D}+A)\leq C m^N \vol \left(D+ \frac{1}{mm_0} A \right),
\]
for all $m$ sufficiently large, and from \autoref{kappa} we know that $\kappa_\sigma(D)\geq N-2$, so
\begin{equation}\label{ff}
m^N \vol \left(D+ \frac{1}{mm_0} A \right)\geq C^{-1} m^{N-2}.
\end{equation}
Using again \eqref{cuz4}, we find a sequence $\ti{s}_n\to 0$ such that
\begin{equation}\label{f2}
\vol(D+\ti{s}_n A)\leq C \ti{s}_n^{\frac{N}{2}},
\end{equation}
and the same bound holds for any other sequence $\ti{s}'_n\to 0$ such that $\ti{t}_n:=-\frac{1}{2}\log \ti{s}_n, \ti{t}'_n:=-\frac{1}{2}\log \ti{s}'_n$ satisfy $|\ti{t}'_n-\ti{t}_n|\leq C$ for some fixed $C$. Consider then the sequence of intervals $I_n:=[\ti{t}_n-1,\ti{t}_n+1]$, and the sequence of real numbers
\[\hat{t}_m:=\frac{1}{2}\log(mm_0).\]
Since $\hat{t}_m$ grows much slower than linearly, while each $I_n$ has width $2$, it follows that there are sequences $m_j\to+\infty$ and $n_j\to+\infty$ such that $\hat{t}_{m_j}\in I_{n_j}$, so
$$|\hat{t}_{m_j}-\ti{t}_{n_j}|\leq 1.$$
The $s$ parameter that corresponds to $\hat{t}_{m_j}$ is $\frac{1}{m_jm_0}$, so \eqref{f2} applied to this sequence gives
\[\vol\left(D+\frac{1}{m_jm_0} A\right)\leq C m_j^{-\frac{N}{2}},\]
which combined with \eqref{ff} gives
\[
C m_j^{\frac{N}{2}}\geq m_j^N\vol\left(D+\frac{1}{m_jm_0} A\right) \geq C^{-1}m_j^{N-2},
\]
for all $j$ large, which is a contradiction as long as $N\geq 5$.
\end{proof}


\bibliographystyle{sfilip_bibstyle}
\bibliography{volume}
\end{document}